\newtheorem{Def}{Definition}[section]
\theoremstyle{remark}
\newtheorem{Rem}[Def]{Remark}
\newtheorem{Ex}[Def]{Example}
\theoremstyle{plain}
\newtheorem{Th}[Def]{Theorem}
\newtheorem{Prop}[Def]{Proposition}
\newtheorem{Lem}[Def]{Lemma}
\newtheorem{Cor}[Def]{Corollary}
\newtheorem{Fact}[Def]{Fact}
\newcommand{\Q}{\mathbb{Q}}
\newcommand{\Z}{\mathbb{Z}}
\newcommand{\C}{\mathbb{C}}
\newcommand{\CM}{\mathcal{M}}
\newcommand{\al}{\alpha }
\newcommand{\be}{\beta }
\newcommand{\ga}{\gamma }
\newcommand{\de}{\delta }
\newcommand{\Ga}{\Gamma }
\newcommand{\vep}{\varepsilon }
\newcommand{\la}{\lambda }
\newcommand{\pa}{\partial }
\newcommand{\ot}{\otimes }
\newcommand{\zero}{\mathbf{0}}
\newcommand{\tF}{\widetilde{F}}
\newcommand{\tf}{\widetilde{f}}
\newcommand{\ztwo}{ \{ 0,1 \} }
\newcommand{\ztwon}{ \{ 0,1 \}^n }
\newcommand{\bfv}{\boldsymbol{v}}
\newcommand{\sol}{Sol_{\dot{x}}}
\def\tp#1{\mathord{\mathopen{{\vphantom{#1}}^t}#1}} 
\newcommand{\ee}{\mathbf{e}} 
\newcommand{\Mon}{\mathbf{Mon}} \newcommand{\Ref}{\mathbf{Ref}}
\newcommand{\GL}{\mathbf{GL}} 
\newcommand{\BFH}{\mathbf{H}}
\title[Finite irreducible monodromy of $F_C$]{
  Lauricella's $F_C$ with finite irreducible monodromy group
}
\author[Y. Goto]{Yoshiaki Goto}
\address[Goto]{
  General Education,
  Otaru University of Commerce,
  Otaru 047-8501, Japan
}
\email{goto@res.otaru-uc.ac.jp}
\keywords{
Monodromy group, 
reflection subgroup, 
Lauricella's $F_C$. 
}
\subjclass[2010]{33C65, 32S40.}
\date{\today}
\begin{document}
\begin{abstract}
  This paper presents our study of the conditions under which the monodromy group for 
  Lauricella's hypergeometric function $F_C (a,b,c;x)$ is finite irreducible. 
  We provide these conditions in terms of the parameters $a,b,c$.
  In addition, we discuss the structure of the finite irreducible monodromy group. 
\end{abstract}

\maketitle

\section{Introduction}\label{section-intro}
Lauricella's hypergeometric series $F_C$ of $n$ variables $x_1 ,\ldots ,x_n$ 
with complex parameters $a$, $b$, $c_1, \ldots, c_n$ is defined by 
\begin{align*}
 F_C (a,b,c ;x ) 
 =\sum_{m_1 ,\ldots ,m_n =0} ^{\infty } 
 \frac{(a,m_1 +\cdots +m_n )(b,m_1 +\cdots +m_n )}
 {(c_1 ,m_1 )\cdots (c_n ,m_n ) m_1 ! \cdots m_n !} x_1 ^{m_1} \cdots x_n^{m_n} ,
\end{align*}
where $x=(x_1 ,\ldots ,x_n),\ c=(c_1 ,\ldots ,c_n)$, 
$c_1 ,\ldots ,c_n \not\in \{ 0,-1,-2,\ldots \}$, and $(c_1 ,m_1)=\Gamma (c_1+m_1)/\Gamma (c_1)$. 
This series converges in the domain 
$$
D_C =\left\{ (x_1 ,\ldots ,x_n ) \in \C^n  \ \middle| \ \sum_{k=1}^{n} \sqrt{|x_k|} <1  \right\} .
$$
The hypergeometric system $E_C (a,b,c)$ 
of differential equations (see (\ref{eq:system-FC})) satisfied by $F_C (a,b,c;x)$ was shown \cite{HT} to be a holonomic system
of rank $2^n$ with the singular locus 
\begin{align*}
  S= \Big( \prod_{k=1}^n x_k \cdot R(x)=0 \Big) \subset \C^n ,\quad
  R(x_1 ,\ldots ,x_n)=\prod_{\vep_1 ,\ldots ,\vep_n =\pm 1} 
  \Big( 1+\sum_{k=1}^n \vep_k \sqrt{x_k} \Big), 
\end{align*}
and that the system $E_C(a,b,c)$ is irreducible 
if and only if the parameters $a,b,c$ satisfy
\begin{align}
  \label{irred-1}
  a-\sum_{k=1}^n i_k c_k ,\quad b-\sum_{k=1}^n i_k c_k \not\in \Z ,\qquad 
  \forall I=(i_1,\dots,i_n)\in \ztwon.
\end{align}
In \cite{GM-FC}, we constructed a fundamental system $\{ \tF_I\}$ of solutions to $E_C (a,b,c)$ 
in a simply connected domain in $D_C -S$ 
under the condition (\ref{irred-1}); 
for details, see Fact \ref{fact:basis}.  

Let $X$ be the complement of the singular locus $S$. 
The fundamental group of $X$ is generated by $n+1$ loops 
$\rho_0 , \rho_1 ,\ldots , \rho_n$ (see Subsection \ref{section-MR}).
In \cite{G-FC-monodromy},
we expressed 
the circuit transformations $\CM_i$ along $\rho_i$ $(i=0,\dots , n)$
by the intersection form on twisted homology groups associated with
the Euler-type integrals of solutions to $E_C (a,b,c)$. 
In \cite{GM-FC}, we also obtained their representation matrices 
$M_i$ $(i=0,\dots , n)$ 
with respect to the basis $\{ \tF_I\}$. 
These matrices are of simple forms. 

In this paper, we present our study of 
the monodromy group $\Mon$, which is a subgroup of $\GL_{2^n}(\C)$ 
generated by these representation matrices. 
When $n=2$, Lauricella's $F_C$ is also known as Appell's $F_4$.  
Several studies have been conducted on 
the monodromy group. 
For example, the finite monodromy group was studied in \cite{Kato} and \cite{Kato-Sekiguchi}, 
and the Zariski closure of $\Mon$, 
which is the Picard--Vessiot group, was studied in \cite{Sasaki}. 

In \cite{GK-FC-Zariski}, we previously investigated the Zariski closure of $\Mon$ 
for general $n$. 
In this study, as a generalization of \cite{Kato}, 
we provide the conditions under which $\Mon$ is finite irreducible. 
As was mentioned in \cite[Proposition 2.14]{GK-FC-Zariski}, 
the (in)finiteness conditions are important 
to classify the Zariski closure of $\Mon$. 
The conditions for the finite irreducible monodromy group 
are given as follows 
(another formulation is given in Theorem \ref{th:fin-irr}). 
\begin{Th}
  \label{th:fin-irr-intro}
  We assume $n\geq 3$. 
  The monodromy group $\Mon$ is finite irreducible 
  if and only if the following two conditions hold:
  \begin{enumerate}
  \item[(\ref{condition-A})] 
    for each $k=1,\dots ,n$, 
    the monodromy group for Gauss' hypergeometric differential equation 
    ${}_2 E_1 (a,b,c_k)$ is
    finite irreducible; 
  \item[(\ref{condition-B})] 
    at least $n$ of $c_1 ,\dots ,c_n$, $b-a$, $c_1 +\cdots +c_n-a-b-(n-1)/2$ 
    are equivalent to $1/2$ modulo $\Z$. 
  \end{enumerate}
\end{Th}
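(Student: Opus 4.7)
The plan is to prove the theorem by dealing separately with the necessity and sufficiency of the two conditions, using throughout the explicit circuit matrices $M_0 ,M_1 ,\dots , M_n$ computed in \cite{GM-FC} together with the intersection-theoretic description of the transformations $\CM_i$ from \cite{G-FC-monodromy}. The conceptual content is that finiteness of $\Mon$ is controlled by two separate mechanisms: the Gauss restrictions along coordinate axes (captured by condition (A)), and a count of half-integer exponents along the $n+2$ distinguished components of the singular locus (captured by condition (B)).

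For necessity, the main device is geometric restriction. Setting $x_j=0$ for all $j\neq k$ degenerates $E_C (a,b,c)$ to Gauss' system ${}_2 E_1 (a,b,c_k)$, and the corresponding sub- or quotient representation of $\Mon$ is the classical Gauss monodromy. If $\Mon$ is finite irreducible, each such Gauss monodromy must be finite and irreducible as well, yielding (A). To extract (B), I would inspect the eigenvalues of the generators $M_i$ and of the selected products $M_1 \cdots M_n$ and $M_0 M_1 \cdots M_n$; their non-trivial eigenvalues are powers of $e^{-2\pi \sqrt{-1}\, c_k}$, $e^{2\pi \sqrt{-1}\,(b-a)}$, and an exponential in $c_1 +\cdots + c_n - a - b - (n-1)/2$. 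Finiteness of $\Mon$ forces each of these to be a root of unity, and the rank-$2^n$ irreducibility condition (\ref{irred-1}) should then force at least $n$ of the corresponding exponents to be $\equiv 1/2 \pmod{\Z}$.

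For sufficiency, the plan is to realise $\Mon$ as a finite complex reflection subgroup of $\GL_{2^n}(\C)$. Namely, each time $c_k \equiv 1/2 \pmod{\Z}$ (respectively, $b-a$ or $c_1 + \cdots + c_n - a - b - (n-1)/2$ is $\equiv 1/2$), the corresponding distinguished element of $\Mon$ becomes a pseudo-reflection of order $2$. Condition (B) guarantees that at least $n$ such reflections exist, while condition (A), which governs the rank-two Gauss subgroups, controls the remaining generators to have finite order as dictated by the Schwarz list. Combined with the structural information on the Zariski closure of $\Mon$ from \cite{GK-FC-Zariski}, these elements generate a group which one then identifies with, or embeds into, a finite irreducible complex reflection group appearing in the Shephard--Todd classification, whence the finiteness of $\Mon$.

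The main obstacle I anticipate is the sufficiency direction. A subgroup of $\GL_{2^n}(\C)$ generated by reflections is generically infinite once $2^n \geq 3$, so one cannot conclude finiteness merely from the presence of enough reflections; rather, one must pin down the isomorphism type or produce an explicit embedding into a known finite Shephard--Todd group. This will require a case analysis according to which $n$ of the $n+2$ quantities in (B) are the half-integer ones, and according to the Schwarz type of each Gauss slice ${}_2 E_1 (a,b,c_k)$. A secondary technical point is verifying that (A) and (B) jointly already imply the irreducibility condition (\ref{irred-1}), so that the \emph{irreducible} half of the conclusion is consistent with the rest of the setup rather than an independent hypothesis.
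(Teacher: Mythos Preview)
Your overall architecture (necessity of (A) by restriction, then separate handling of (B), then sufficiency) matches the paper's, but both the necessity argument for (B) and the sufficiency argument have genuine gaps that the paper fills by a completely different mechanism.

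\textbf{Necessity of (B).} Your proposal says that finiteness forces the eigenvalues of $M_k$, $M_1\cdots M_n$, $M_0 M_1\cdots M_n$ to be roots of unity, and that ``the rank-$2^n$ irreducibility condition should then force at least $n$ of the corresponding exponents to be $\equiv 1/2$''. The first step is correct but much too weak: already condition (A) forces all the relevant quantities to be roots of unity, so you learn nothing new. The hard content is why at least $n$ of them must be \emph{exactly} $-1$, and there is no mechanism in your sketch for this. The paper obtains it by bootstrapping from Kato's $n=2$ theorem in two different ways. First, for any three indices $i,j,k$ it uses the solution transformation $x_n^{-a}f(x_1/x_n,\dots,1/x_n)$ to show that $\Mon^{(2)}(\al,\al\ga_k^{-1},(\ga_i,\ga_j))$ and $\Mon^{(2)}(\be,\be\ga_k^{-1},(\ga_i,\ga_j))$ are subquotients of $\Mon^{(n)}$, hence finite irreducible; applying Kato's condition (B$'$) to these auxiliary $F_4$ groups forces at least $n-2$ of the $\ga_k$ to equal $-1$. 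Then a second application of Kato's (B$'$) to the honest subquotients $\Mon^{(2)}(\al,\be,(\ga_i,\ga_j))$ nails down the remaining two conditions. This chain of reductions is the substance of the necessity proof; an eigenvalue count alone cannot distinguish a primitive third root of unity from $-1$.

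\textbf{Sufficiency.} You correctly identify the obstacle: knowing that you have many order-$2$ reflections in $\GL_{2^n}(\C)$ does not by itself give finiteness, and a Shephard--Todd identification in rank $2^n$ would require a substantial case analysis that you have not begun. The paper bypasses this entirely. It introduces the normal closure $\Ref^{(n)}$ of $M_0$ (already known from \cite{GK-FC-Zariski} to be finite iff $\Mon^{(n)}$ is) and proves two explicit direct-product decompositions: if two of the $\ga_k$ are $-1$ then $\Ref^{(n)}\simeq(\Ref^{(n-1)})^2$, and if one $\ga_k$ and $\be\al^{-1}$ are $-1$ then again $\Ref^{(n)}\simeq(\Ref^{(n-1)})^2$. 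Under condition (B) one can iterate these decompositions all the way down to $\Ref^{(1)}$ or $\Ref^{(2)}$, whose finiteness is exactly condition (A) or Kato's theorem. No Shephard--Todd classification is invoked. Your secondary technical point, that (A) and (B) should imply the irreducibility condition, is indeed checked explicitly in the paper (as a short lemma preceding the sufficiency argument).
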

We prove this theorem by focusing on the reflection subgroup $\Ref$, which 
is a normal subgroup generated by the reflection $M_0$ (see Section \ref{section:Ref}). 
Certain concepts in our proofs are based on those of \cite{Kato}. 
However, we note that the condition (\ref{condition-B}) in Theorem \ref{th:fin-irr-intro}
is not a direct generalization of \cite{Kato} (see Remark \ref{rem:F4-FC} (ii)). 

The finiteness condition is also known as the algebraicity condition. 
Namely, the monodromy group is finite if and only if the solutions to $E_C (a,b,c)$ 
are algebraic functions over $\C (x_1,\dots ,x_n)$. 
In \cite{Bod}, 
algebraicity conditions of $E_C(a,b,c)$ were determined by using 
results of \cite{Beukers}, which were obtained by studying the algebraicity conditions for 
$A$-hypergeometric systems. 
Their approaches are quite different from ours. 
The monodromy group was not treated directly in \cite{Beukers} and \cite{Bod}, 
whereas in our study, we investigate it in detail. 
Further, using our results for the reflection subgroup,
we can also determine the structure of the finite irreducible monodromy group. 


\section{Preliminaries}
In this section,
we present certain pertinent facts about Lauricella's $F_C$ 
mentioned in previous studies 
(\cite{GM-FC}, \cite{HT}, and \cite{L}). 
We set
\begin{align*}
  \al =\exp (2\pi \sqrt{-1} a),\quad 
  \be =\exp (2\pi \sqrt{-1} b),\quad 
  \ga_k =\exp (2\pi \sqrt{-1} c_k)\ (k=1,\ldots ,n).
\end{align*}
Under these notations, the condition (\ref{irred-1}) is equivalent to 
\begin{align}
  \label{irred-2}
  \al-\prod_{k=1}^n \ga_k^{i_k} ,\quad \be-\prod_{k=1}^n \ga_k^{i_k} \neq 0 ,\qquad 
  \forall I=(i_1,\dots,i_n) \in\ztwon.
\end{align}


\subsection{System of differential equations}\label{subsection:DE}
For $k=1 ,\dots , n$, 
let $\pa_k$ be the partial differential operator with respect to $x_k$. 
We set $\theta_k =x_k \pa_k$ and $\theta =\sum_{k=1}^n \theta_k$.  
Lauricella's $ F_C (a,b,c;x)$ satisfies the system of differential equations 
\begin{equation}
\label{eq:system-FC}  
\left[ \theta_k (\theta_k+c_k-1)-x_k(\theta +a)(\theta +b)  \right] f(x)=0 
\quad (k=1,\dots , n).
\end{equation}
The system (\ref{eq:system-FC}) is 
known as Lauricella's hypergeometric system $E_C (a,b,c)$ of 
differential equations. 
By \cite{HT},
the left ideal generated by the differential operators (\ref{eq:system-FC}) is a holonomic ideal of rank $2^n$ 
with the singular locus $S$, 
and the system is irreducible
if and only if the parameters $a,b,c_1,\dots,c_n$ satisfy 
(\ref{irred-1}) (equivalently, $\al,\be,\ga_1,\dots,\ga_n$ satisfy 
(\ref{irred-2})).

Set $\dot{x}=\left( \frac{1}{2n^2},\ldots ,\frac{1}{2n^2} \right) \in X$, 
and let $\sol$ be the local solution space to $E_C(a,b,c)$ around $\dot{x}$.
For $I=(i_1,\dots,i_n) \in \ztwon$, we set 
\begin{align*}
F_I(x)=\frac{\prod_{k=1}^n\Ga((-1)^{i_k}(1-c_k))}
{\Ga(1-a^I)\Ga(1-b^I)}\cdot \prod_{k=1}^n x_k^{i_k(1-c_k)}
\cdot F_C(a^I,b^I,c^I;x),
\end{align*}
where
\begin{align*}
  &a^I=a+\sum_{k=1}^n i_k(1-c_k),\quad 
    b^I=b+\sum_{k=1}^n i_k(1-c_k), \\
  &c^I=(c_1+2i_1(1-c_1),\dots,c_n+2i_n(1-c_n)).
\end{align*}
It is known that 
the functions $\{ F_I\}_{I\in \ztwon}$ form 
the basis of $\sol$ under the conditions (\ref{irred-1}) and $c_1,\dots,c_n\notin \Z$.

\subsection{Monodromy representation and fundamental group}
\label{section-MR}
For $\rho \in \pi_1 (X,\dot{x})$ and $g \in \sol$, 
let $\rho_* g$ be the analytic continuation of $g$ along $\rho$. 
Since $\rho_* g$ is also a solution to $E_C (a,b,c)$,  
the map $\rho_* :\sol \to \sol ;\ g\mapsto \rho_* g$ defines 
a linear automorphism. 
Thus, we obtain the \emph{monodromy representation} 
$$
\CM :\pi_1 (X,\dot{x}) \to GL(\sol);\ 
\rho \mapsto \rho_{*}
$$
of $E_C(a,b,c)$, where $GL(V)$ is 
the general linear group on a vector space $V$. 

Next, we introduce the generators of the fundamental group $\pi_1 (X,\dot{x})$. 
Let $\rho_0, \rho_1 ,\ldots ,\rho_n$ be loops in $X$ such that 
\begin{itemize}
\item $\rho_0$ turns around the hypersurface $(R(x)=0)$ near the point 
  $\left( \frac{1}{n^2},\ldots, \frac{1}{n^2} \right)\in (R(x)=0)$, positively; 
\item $\rho_k \ (k=1,\dots , n)$ turns around the hyperplane $(x_k=0)$, positively. 
\end{itemize}
The precise definitions can be found in \cite{G-FC-monodromy}.
\begin{Fact}[\cite{G-FC-monodromy}, \cite{GK-FC-pi1}, \cite{Terasoma}]\label{pi1}
  The loops $\rho_0 ,\rho_1 ,\ldots ,\rho_n$ generate 
  the fundamental group $\pi_1 (X,\dot{x})$. 
  If $n\geq 2$, they satisfy 
  \begin{align*}
    \rho_i \rho_j =\rho_j \rho_i \quad (i,j=1,\dots , n) ,\quad
    (\rho_0 \rho_k)^2 =(\rho_k \rho_0)^2 \quad (k=1,\dots , n).
  \end{align*}
  In addition, if $n\geq 3$, they also satisfy
  \begin{align*}
    &(\rho_{i_1} \cdots \rho_{i_p})\rho_0 (\rho_{i_1} \cdots \rho_{i_p})^{-1} \cdot 
    (\rho_{j_1} \cdots \rho_{j_q})\rho_0 (\rho_{j_1} \cdots \rho_{j_q})^{-1} \\
    &=
    (\rho_{j_1} \cdots \rho_{j_q})\rho_0 (\rho_{j_1} \cdots \rho_{j_q})^{-1} \cdot 
    (\rho_{i_1} \cdots \rho_{i_p})\rho_0 (\rho_{i_1} \cdots \rho_{i_p})^{-1} ,
  \end{align*}
  for $I=\{ i_1 ,\ldots, i_p\}$, $J=\{ j_1 ,\ldots, j_q\} \subset \{ 1,\ldots ,n\}$ 
  with $p,q \geq 1$, $p+q \leq n-1$ and $I \cap J=\emptyset$. 
  Further, these relations generate all relations in $\pi_1 (X,\dot{x})$. 
\end{Fact}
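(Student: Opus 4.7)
The plan is to reduce the computation to that of a hyperplane arrangement complement via the branched covering $\phi\colon \C^n \to \C^n$, $\phi(y_1,\ldots,y_n) = (y_1^2,\ldots,y_n^2)$. Under $\phi$ the preimage of $S$ is the union of the coordinate hyperplanes $\{y_k=0\}$ and the $2^n$ affine hyperplanes $H_\vep \colon 1+\sum_k \vep_k y_k = 0$ indexed by $\vep\in\{\pm 1\}^n$. Setting $Y = \phi^{-1}(X)$, the restriction $\phi\colon Y \to X$ is an unramified Galois covering with deck group $(\Z/2\Z)^n$ acting by independent sign changes of the $y_k$, and one obtains the exact sequence
$$1 \to \pi_1(Y,\dot{y}) \to \pi_1(X,\dot{x}) \to (\Z/2\Z)^n \to 1,$$
where $\rho_k$ maps to the $k$-th sign generator while $\rho_0$ lifts to an honest meridian of the hyperplane $H_{(+,\ldots,+)}$.

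First I would compute $\pi_1(Y,\dot{y})$ by applying the Zariski--van Kampen theorem (combined with a Zariski--Lefschetz reduction to a generic $2$-plane section) to this hyperplane arrangement. Writing $s_\vep$ for a meridian of $H_\vep$, the key geometric observation is that two distinct hyperplanes $H_\vep$, $H_{\vep'}$ meet transversely in a smooth codimension-$2$ affine stratum whenever $\vep' \neq -\vep$, the antipodal case being the only parallel pair. Hence the braid monodromy yields commutator relations $[s_\vep,s_{\vep'}]=1$ for all non-antipodal pairs, together with the obvious commutations between the coordinate meridians.

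Next I would push down to $\pi_1(X,\dot{x})$ via the exact sequence. Choosing $\dot{y}$ in the chamber containing the origin, the conjugate $(\rho_{i_1}\cdots \rho_{i_p})\rho_0 (\rho_{i_1}\cdots \rho_{i_p})^{-1}$ lifts to $s_\vep$ where $\vep_j=-1$ precisely when $j \in I := \{i_1,\ldots,i_p\}$. Under this identification, $\rho_i\rho_j=\rho_j\rho_i$ is the standard commutation of meridians around the transverse intersection $\{x_i=0\}\cap\{x_j=0\}$; the braid relation $(\rho_0\rho_k)^2 = (\rho_k\rho_0)^2$ is equivalent to $[\rho_0, \rho_k \rho_0 \rho_k^{-1}]=1$, which lifts to $[s_{(+,\ldots,+)}, s_\vep]=1$ with $\vep$ obtained by flipping only the $k$-th sign; and for $n\geq 3$, the assumption $I\cap J = \emptyset$ with $p+q \leq n-1$ exactly ensures that the associated signs $\vep_I$ and $\vep_J$ are non-antipodal, so that $[s_{\vep_I}, s_{\vep_J}]=1$ in $\pi_1(Y)$.

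The main obstacle will be proving \emph{completeness} of this list of relations. For this I would invoke the Zariski--Lefschetz hyperplane theorem for hypersurface complements to reduce to the local monodromy at each codimension-$2$ stratum of $S$: the intersections $\{x_i=0\}\cap\{x_j=0\}$ yield the commutation of $\rho_i$ and $\rho_j$; the intersections $\{x_k=0\}\cap (R=0)$ yield the $B_2$-type braid relation $(\rho_0\rho_k)^2=(\rho_k\rho_0)^2$, the length-$4$ braid arising from the $\sqrt{x_k}$-branching of $R$; and the self-intersection locus of $(R=0)$, which in the $y$-picture pulls back to the transverse strata $H_\vep\cap H_{\vep'}$ with $\vep'\neq -\vep$, yields the higher commutators for $n\geq 3$. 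The delicate combinatorial point is that only non-antipodal pairs $(\vep,\vep')$ contribute a codimension-$2$ stratum in the image $S$, which matches exactly the condition $p+q\leq n-1$ with $I\cap J=\emptyset$; antipodal pairs correspond to strata at infinity and provide no new affine relation.
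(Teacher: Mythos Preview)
The paper does not prove this statement; it is recorded as a Fact with references to \cite{G-FC-monodromy}, \cite{GK-FC-pi1}, \cite{Terasoma}, so there is no in-paper argument to compare against. Your overall strategy---pass to the $(\Z/2\Z)^n$-cover $y\mapsto(y_1^2,\dots,y_n^2)$, where $R(x)=0$ pulls back to the affine arrangement $\{H_\vep\}_{\vep\in\{\pm1\}^n}$, and analyse $\pi_1$ there---is precisely the one used in the cited works, and your dictionary sending $(\rho_{i_1}\cdots\rho_{i_p})\rho_0(\rho_{i_1}\cdots\rho_{i_p})^{-1}$ to the meridian $s_{\vep_I}$ is correct, as is the observation that $\vep_I$ and $\vep_J$ are non-antipodal exactly when $I\cap J=\emptyset$ and $p+q\le n-1$.

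There is, however, a genuine gap in your treatment of the relation $(\rho_0\rho_k)^2=(\rho_k\rho_0)^2$. First, it is \emph{not} equivalent to $[\rho_0,\rho_k\rho_0\rho_k^{-1}]=1$: writing both out, one gives $\rho_0\rho_k\rho_0=\rho_k\rho_0\rho_k\rho_0\rho_k^{-1}$ and the other $\rho_0\rho_k\rho_0=\rho_k\rho_0\rho_k^{-1}\rho_0\rho_k$, which differ unless $\rho_k^2$ is already known to be central. Second, and more structurally, the two hyperplanes $H_\vep$ and $H_{\vep'}$ that differ only in the $k$-th sign do \emph{not} meet in a generic double point: subtracting their equations gives $2\vep_k y_k=0$, so $H_\vep\cap H_{\vep'}\subset\{y_k=0\}$, and the codimension-two stratum there is a \emph{triple} point of the arrangement. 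The local braid monodromy at such a flat is not a bare commutator among the $s_\vep$'s; it is a cyclic relation involving the meridian of $\{y_k=0\}$ as well, and it is this relation that descends to the $B_2$-type braid downstairs. Your completeness argument, which rests on ``non-antipodal pairs meet transversely and hence commute'', therefore breaks exactly at the strata responsible for the second family of relations. In \cite{GK-FC-pi1} and \cite{Terasoma} this is handled by a full case analysis of the codimension-two flats (including those involving coordinate hyperplanes) together with an explicit description of the $(\Z/2\Z)^n$-action on $\pi_1(Y)$ needed to present the extension; your outline would need both of these ingredients to go through.
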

In \cite{G-FC-monodromy}, $n+1$ linear maps 
$\CM_i =\CM (\rho_i) \ (i=0,\dots , n)$ were investigated in terms of 
twisted homology groups and the intersection form.

\subsection{Representation matrices of monodromy}
As in \cite{GK-FC-Zariski} and \cite{GM-FC}, 
we define the tensor product $A\otimes B$ of matrices 
$A$ and $B=(b_{ij})_{1\leq i\leq r, 1\leq j\leq s}$ as 
\begin{align*}
  A\otimes B=
  \begin{pmatrix}
    A\, b_{11} & A\, b_{12} &\cdots & A\, b_{1s} \\
    A\, b_{21} & A\, b_{22} &\cdots & A\, b_{2s} \\
    \vdots  &\vdots   &\ddots & \vdots  \\
    A\, b_{r1} & A\, b_{r2} &\cdots & A\, b_{rs} \\
  \end{pmatrix}.
\end{align*}
We regard $\C^{2^n}$ as $\C^2 \otimes \cdots \otimes \C^2$, and 
take as basis 
\begin{align*}
  \ee_I=\ee_{i_1, \dots, i_n} = \ee_{i_1} \otimes \cdots \otimes \ee_{i_n}, \qquad
  &\ee_0 = \begin{pmatrix} 1 \\ 0 \end{pmatrix}, \ 
  \ee_1 = \begin{pmatrix} 0 \\ 1 \end{pmatrix}, \\ 
  &I=(i_1,\dots,i_n) \in \ztwon. 
\end{align*}
We align this basis in the order $\prec$ of 
indices $I = (i_1, \dots, i_n) \in \{0,1\}^n$, which is given by 
\begin{align*}
  &(i_1 ,\dots ,i_n) \prec (i'_1 ,\dots ,i'_n) \\
  &\Longleftrightarrow  
  \exists r ~ \textrm{ s.t. } ~ i_j =i'_j \ (j=r+1 ,\ldots ,n),\ 
  i_r=0,\ i'_r=1 .
\end{align*}
For example, we align the basis of $\C^{8} =\C^2 \otimes\C^2 \otimes\C^2$ ($n=3$) as follows: 
\begin{align*}
  (\ee_{0,0,0}, \ee_{1,0,0}, \ee_{0,1,0}, \ee_{1,1,0}, 
  \ee_{0,0,1}, \ee_{1,0,1}, \ee_{0,1,1}, \ee_{1,1,1}). 
\end{align*}

\begin{Fact}[{\cite[Theorem 3.3]{GM-FC}}]\label{fact:basis}
  We define $\{ \tF_I\}_{I \in \ztwon}$ to be 
  \begin{align*}
    (\dots, \tF_I(x),\dots) = (\dots, F_I(x),\dots) \cdot \left(
    \begin{pmatrix}
      1- \ga_1& 1 \\
      0 & 1 
    \end{pmatrix}
    \ot \cdots \ot
    \begin{pmatrix}
      1- \ga_n& 1 \\
      0 & 1 
    \end{pmatrix}
    \right).
  \end{align*}
  Then, $\{ \tF_I\}_{I\in \ztwon}$ form 
  the basis of $\sol$ under the condition (\ref{irred-1}) only. 
\end{Fact}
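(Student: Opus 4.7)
The plan is to prove Fact~\ref{fact:basis} in two stages. First, I would verify that each $\tF_I$ extends holomorphically in $(a,b,c)$ to every parameter value satisfying (\ref{irred-1}), including those with some $c_k\in\Z$; then I would show that the $2^n$ functions $\{\tF_I\}$ remain linearly independent there, and hence form a basis of $\sol$, which is known to be $2^n$-dimensional. Expanding the tensor product defining the base change, one finds
\[
  \tF_I(x) = \prod_{k\,:\,i_k=0}(1-\ga_k)\,\sum_{J}\,F_J(x),
\]
where $J\in\ztwon$ satisfies $J_k=0$ whenever $i_k=0$ and is unconstrained whenever $i_k=1$. This formula isolates the vanishing factors $(1-\ga_k)$ and makes visible which $F_J$'s combine to produce $\tF_I$.

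For the holomorphy step, the key phenomenon is pole cancellation at $c_k\in\Z$. In coordinates $k$ with $i_k=0$, every summand $F_J$ carries $\Gamma(1-c_k)$ in its prefactor, and the product $(1-\ga_k)\Gamma(1-c_k)=-2\pi\sqrt{-1}\,e^{\pi\sqrt{-1}c_k}/\Gamma(c_k)$ is entire by the reflection formula, so the pole is killed by the matching zero of $(1-\ga_k)$. In coordinates $k$ with $i_k=1$, the sum pairs $F_J$ (with $J_k=0$) and $F_{J'}$ (with $J'_k=1$) that differ only in the $k$th index, and I would verify that their individual singularities at $c_k\in\Z$ cancel. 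This is the multivariate analogue of the classical fact that at integer $c$ the two standard local Frobenius solutions of Gauss' hypergeometric equation merge, so a properly normalized sum of them yields the logarithmic companion; the cancellation lifts coordinate-by-coordinate because $F_C$ reduces to a Gauss-type factor in the variable $x_k$ when the remaining variables are held fixed.

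For the linear independence step, on the dense open set where all $c_k\notin\Z$ the transformation matrix has determinant $\prod_k(1-\ga_k)^{2^{n-1}}\neq 0$, so $\{\tF_I\}$ is a basis because $\{F_I\}$ is, by the statement recalled in Subsection~\ref{subsection:DE}. To extend this to the remainder of (\ref{irred-1}), I would identify $\tF_I$ with the Frobenius (or, when exponents coincide, logarithmic) solution at $x=0$ whose leading behavior is $\prod_k x_k^{i_k(1-c_k)}$. The pole-cancelling sum above is precisely the classical recipe for producing the canonical logarithmic solution when some $c_k\in\Z$, and solutions of distinct Frobenius/logarithmic types at $x=0$ are linearly independent, which yields the basis property throughout the region (\ref{irred-1}).

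The principal obstacle is the pole cancellation for $i_k=1$ coordinates. Carrying it out requires a precise Kummer-type connection between $F_C(a^J,b^J,c^J;x)$ with $J_k=0$ and with $J_k=1$ near $c_k\in\Z$, balanced against the $\Gamma$-prefactors, together with a check that the balance survives the second stage when one takes into account the ill-definedness of $F_C(a^{J'},b^{J'},c^{J'};x)$ at $c^{J'}_k\in\{0,-1,\dots\}$. The cleanest route is to reduce coordinate-by-coordinate to the one-variable situation for Gauss' equation and invoke the classical theory of logarithmic degenerations of ${}_2F_1$, then globalize via the factorization structure of $F_C$ across the remaining variables.
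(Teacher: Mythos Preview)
The paper does not prove this statement: it is recorded as a \emph{Fact} with citation to \cite[Theorem~3.3]{GM-FC}, and no argument is supplied in the present paper. There is therefore no ``paper's own proof'' to compare your proposal against; any detailed comparison would have to be made with the external reference \cite{GM-FC}, which is not reproduced here.

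On the merits of your sketch: the expansion formula for $\tF_I$ and the observation that $(1-\ga_k)\Gamma(1-c_k)=-2\pi\sqrt{-1}\,e^{\pi\sqrt{-1}c_k}/\Gamma(c_k)$ is entire are both correct, and the overall two-step strategy (holomorphic extension, then independence) is sound. The part you flag as the principal obstacle really is one: in the $i_k=1$ coordinates the poles of $\Gamma(1-c_k)$ (at $c_k\in\Z_{\geq 1}$) and of $\Gamma(c_k-1)$ (at $c_k\in\Z_{\leq 1}$) are not located at the same integers, and simultaneously the series $F_C(a^{J'},b^{J'},c^{J'};x)$ itself fails to be defined when the relevant $c^{J'}_k$ hits a non-positive integer, so one must regularize the Gamma prefactor and the Pochhammer denominators together rather than separately. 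Your proposed reduction to the one-variable Gauss case is plausible in spirit, but the assertion that ``$F_C$ reduces to a Gauss-type factor in the variable $x_k$ when the remaining variables are held fixed'' is not literally true---the operators in (\ref{eq:system-FC}) are coupled through $\theta=\sum_k\theta_k$---so that step would need to be made precise, for instance via a genuine Frobenius analysis at the origin rather than a factorization of the solution itself.
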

In \cite{GM-FC}, the representation matrices of $\CM_i$'s 
with respect to the basis $\{ \tF_I\}_I$ were obtained; they are simple matrices. 
Let $E_m$ be a unit matrix of size $m$. 
\begin{Fact}[{\cite[Corollary 3.5]{GM-FC}}]
  \label{fact:rep-mat}
  Let $M_i$ 
  be the representation matrix of $\CM_i$ $(i=0, \dots ,n)$
  with respect to the basis $\{ \tF_I\}_I$. 
  For $k=1,\dots, n$, we have 
  \begin{align*}
    M_k =E_2 \ot \cdots \ot E_2 \ot \underset{k\textrm{th}}{G(\ga_k)} \ot E_2 \ot \cdots \ot E_2 ,
    \quad 
    G(\ga_k) =\begin{pmatrix}1 & -\ga_k^{-1} \\ 0 & \ga_k^{-1} \end{pmatrix}.    
  \end{align*}
  The matrix $M_0$ is written as 
  \begin{align*}
    M_0 =E_{2^n} -\tp{(\zero, \dots ,\zero ,\bfv)} ,
  \end{align*}
  where $\bfv \in \C^{2^n}$ is a column vector of which the $I$-th entry is 
  \begin{align*}
    \left\{ 
      \begin{array}{ll}
        (-1)^n \frac{(\al-1)(\be-1)\prod_{k=1}^n \ga_k}{\al \be}& 
       (I=(0,\dots ,0)) ,\\
        (-1)^{n+|I|} \frac{(\al \be +(-1)^{|I|} \prod_{k=1}^n \ga_k^{i_k})
          \prod_{k=1}^n \ga_k^{1-i_k}}{\al \be}& (I\neq(0,\dots ,0)) .
      \end{array}
    \right.
  \end{align*}
\end{Fact}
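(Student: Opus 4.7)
The plan is to compute the $n+1$ matrices by working first in a basis where the local monodromy is transparent, then translating to the basis $\{\tF_I\}$ via the base change of Fact \ref{fact:basis}.

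For $M_k$ with $k\geq 1$, the computation is essentially direct. The explicit expression of $F_I$ in Subsection 2.1 shows that the only multivalued factor of $F_I$ near the hyperplane $(x_k=0)$ is $x_k^{i_k(1-c_k)}$, because $F_C(a^I,b^I,c^I;x)$ is convergent and single-valued there. Analytic continuation along $\rho_k$ therefore multiplies $F_I$ by $\exp(2\pi\sqrt{-1}\,i_k(1-c_k))=\ga_k^{-i_k}$. In the tensor basis ordered as in the excerpt, this scaling is the diagonal matrix $E_2\ot\cdots\ot D(\ga_k)\ot\cdots\ot E_2$ with $D(\ga_k)=\mathrm{diag}(1,\ga_k^{-1})$ sitting in the $k$-th slot. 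Since the base change $(\tF_I)=(F_I)\cdot(T_1\ot\cdots\ot T_n)$ factors through the tensor structure, conjugation by it reduces to $T_j^{-1}E_2 T_j=E_2$ for $j\neq k$ together with the $2\times 2$ calculation $T_k^{-1}D(\ga_k)T_k=G(\ga_k)$, which yields the claimed formula.

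The matrix $M_0$ is more subtle because $\rho_0$ encircles the quadratic branch $(R(x)=0)$, near which $\{F_I\}$ is not adapted. I would approach this via the Euler-type integral representation of the $\tF_I$ and the twisted intersection theory of \cite{G-FC-monodromy}. A local analysis of the Riemann scheme at a smooth point of $(R(x)=0)$ shows that $2^n-1$ of the local exponents agree while one differs from the others, forcing $\CM_0$ to be a complex pseudo-reflection; equivalently, $M_0-E_{2^n}$ has rank one. Combined with the chosen ordering of $\{\tF_I\}$, the reflecting direction is carried by the last basis vector $\tF_{(1,\dots,1)}$, producing the structural form $M_0=E_{2^n}-\tp{(\zero,\dots,\zero,\bfv)}$, and the nontrivial eigenvalue of $M_0$ is pinned down by the local exponent.

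The main obstacle is the explicit form of the vector $\bfv$. In the twisted-homology framework, each $\tF_I$ corresponds to a twisted cycle $\De_I$ paired against a fixed multivalued integrand of Euler type, and $\CM_0$ is encoded by the vanishing cycle at $R=0$ together with its intersection numbers against the basis dual to $\{\De_I\}$. Reading off $v_I$ then amounts to computing these intersections on the chambers of the associated real hyperplane arrangement. The sign $(-1)^{n+|I|}$ comes from orientations and from the number of hyperplanes separating the vanishing cycle from $\De_I$, while the combination $\al\be+(-1)^{|I|}\prod_k\ga_k^{i_k}$ arises from the values of the multivalued factors of the integrand near the relevant faces of $\De_I$. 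Carrying out this intersection-theoretic bookkeeping uniformly in $I\in\ztwon$, with consistent choices of branches, is the technically delicate step; once it is done, the special case $I=(0,\dots,0)$ simplifies to $(-1)^n(\al-1)(\be-1)\prod_k\ga_k/(\al\be)$ by direct factoring.
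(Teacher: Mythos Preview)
The paper does not prove this statement: it is recorded as a \emph{Fact} quoted verbatim from \cite[Corollary 3.5]{GM-FC}, so there is no argument in the present paper to compare against.

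As for the content of your proposal: your treatment of $M_k$ for $k\ge 1$ is correct and essentially complete. The only multivaluedness of $F_I$ near $(x_k=0)$ is indeed the factor $x_k^{i_k(1-c_k)}$, so $\rho_k$ acts on the $\{F_I\}$ basis by the diagonal matrix $E_2\ot\cdots\ot D(\ga_k)\ot\cdots\ot E_2$, and the base change $T_1\ot\cdots\ot T_n$ of Fact~\ref{fact:basis} conjugates this slotwise to $G(\ga_k)$; the $2\times2$ identity $T_k^{-1}D(\ga_k)T_k=G(\ga_k)$ is a one-line check.

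For $M_0$, however, what you have written is an outline rather than a proof. You correctly identify that $\CM_0$ is a pseudo-reflection (this follows from the local exponents at a smooth point of $(R=0)$), and your description of how the entries $v_I$ should arise from intersection numbers of twisted cycles against a vanishing cycle is the right picture, matching the approach of \cite{G-FC-monodromy} and \cite{GM-FC}. But the actual content of the result lies precisely in the step you flag as ``technically delicate'' and do not carry out: computing those intersection numbers explicitly and tracking the branch choices to obtain the closed formula for $v_I$. Without that computation, the structural form $M_0=E_{2^n}-\tp{(\zero,\dots,\zero,\bfv)}$ is justified, but the explicit entries are not. If you want a self-contained proof you would need to reproduce the twisted-cycle calculation from \cite{GM-FC}, or alternatively compute $M_0$ directly by analytic continuation of the $\tF_I$ along $\rho_0$ using connection formulas.
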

The aforementioned expressions indicate that
these representation matrices depend on the parameters
$\al$, $\be$, $\ga_1,\dots ,\ga_n$. 
In other words, they are determined by the parameters $a$, $b$, $c_1,\dots ,c_n$ modulo $\Z$. 
Thus, we often write $M_i = M_i^{(n)}(\al ,\be ,\ga)$ and  
$\bfv =\bfv^{(n)}(\al ,\be ,\ga) =\tp{ (\ldots ,v^{(n)}_I(\al ,\be ,\ga) ,\ldots)}$. 

\begin{Rem}\label{rem:M0-reflection}
  $\ee_{1,\dots,1} =\ee_1 \otimes \cdots \otimes \ee_1$ is an eigenvector of 
  $M_0=M_0^{(n)}(\al ,\be ,\ga)$, 
  that is,  
  \begin{align*}
    M_0 \ee_{1,\dots,1} = \delta_0^{(n)} (\al ,\be ,\ga) \ee_{1,\dots,1} ,\quad 
    \delta_0^{(n)} (\al ,\be ,\ga)  = (-1)^{n+1} \frac{\gamma_1 \cdots \gamma_n}{\alpha \beta} .
  \end{align*}
  The eigenspace of $M_0$ with eigenvalue $1$ is $2^n -1$ dimensional. 
  The matrix $M_0^{(n)} (\al ,\be ,\ga)$ is a ``reflection'' (see Section \ref{section:Ref}) 
  with the special eigenvalue $\delta_0^{(n)} (\al ,\be ,\ga)$. 
\end{Rem}

\begin{Ex}
  In the case $n=2$, the representation matrices are as follows. 
{\allowdisplaybreaks
\begin{align*}
  M_1&=
  \begin{pmatrix}1 & -\ga_1^{-1} \\ 0 & \ga_1^{-1} \end{pmatrix}
  \ot 
  \begin{pmatrix}1 & 0 \\ 0 & 1 \end{pmatrix}
  =
  \begin{pmatrix}
    1&-\ga_{1}^{-1}&0&0\\ 
    0&\ga_{1}^{-1}&0&0\\  
    0&0&1&-\ga_{1}^{-1}\\ 
    0&0&0&\ga_{1}^{-1}
  \end{pmatrix},\\
  M_2&=
  \begin{pmatrix}1 & 0 \\ 0 & 1 \end{pmatrix}
  \ot 
  \begin{pmatrix}1 & -\ga_2^{-1} \\ 0 & \ga_2^{-1} \end{pmatrix}
  =
  \begin{pmatrix}
    1&0&-{\ga_{2}}^{-1}&0 \\ 
    0&1&0&-\ga_{2}^{-1}\\ 
    0&0&\ga_{2}^{-1}&0 \\
    0&0&0&\ga_{2}^{-1}
  \end{pmatrix}, \\
  M_0&=
  \begin{pmatrix}
    1&0&0&0\\ 
    0&1&0&0\\ 
    0&0&1&0\\ 
    -\frac{(\al-1)(\be-1) \ga_{1}\ga_{2}  }{\al\be}
    &\frac { \left( \al\be-\ga_{1} \right) \ga_{2}}{\al\be}
    &\frac{\left( \al\be-\ga_{2} \right)\ga_{1}}{\al\be}
    &-\frac{\ga_{1}\ga_{2}}{\al \be}
  \end{pmatrix}.
\end{align*}
}
\end{Ex}

\subsection{Monodromy group}
Using the basis $\{ \tF_I\}_I$, we can identify $\sol$ and $\C^{2^n}$. 
Thus, we regard the monodromy representation as a group homomorphism $\CM :\pi_1 (X,\dot{x}) \to \GL_{2^n}(\C)$. 
The \emph{monodromy group} $\Mon=\Mon^{(n)}(\al,\be,\ga)$ is defined by 
\begin{align*}
  \Mon^{(n)}(\al,\be,\ga) =\CM (\pi_1 (X,\dot{x}))
  =\langle M_0 ,M_1 ,\dots ,M_n\rangle .
\end{align*}
Recall that the matrices $M_0 ,M_1 ,\dots ,M_n$ depend on 
$\al$, $\be$, $\ga_1 ,\dots ,\ga_n$.
We restate Theorem \ref{th:fin-irr-intro} in terms of $\al$, $\be$, and $\ga$. 
\begin{Th}
  \label{th:fin-irr}
  We assume $n\geq 3$. 
  The monodromy group $\Mon^{(n)}(\al,\be,\ga)$ is finite irreducible 
  if and only if the following two conditions hold:
  \begin{enumerate}[(A)]
  \item\label{condition-A} 
    each $\Mon^{(1)}(\al,\be,\ga_k)$ ($k=1,\dots ,n$) is 
    finite irreducible; 
  \item\label{condition-B} 
    at least $n$ of $\ga_1 ,\dots ,\ga_n$, $\be\al^{-1}$, $\de_0^{(n)}(\al,\be,\ga)$ 
    are $-1$. 
  \end{enumerate}
\end{Th}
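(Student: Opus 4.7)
The plan is to analyze $\Mon$ through its normal reflection subgroup $\Ref$, the subgroup generated by all conjugates of $M_0$ (to be introduced in Section \ref{section:Ref}). Since $M_1,\dots,M_n$ pairwise commute by Fact \ref{pi1}, the quotient $\Mon/\Ref$ is a quotient of the abelian group $\langle M_1,\dots,M_n\rangle$, and its finiteness reduces to each $\ga_k$ being a root of unity. Hence $\Mon$ is finite if and only if $\Ref$ is finite and each $\ga_k$ is a root of unity; irreducibility of $\Mon$ is controlled by the criterion (\ref{irred-2}) for $E_C(a,b,c)$.

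For the necessity of (\ref{condition-A}), assume $\Mon^{(n)}(\al,\be,\ga)$ is finite irreducible. Each $\ga_k$ is then a root of unity because $M_k$ has finite order. To force the full Gauss condition I would realize $\Mon^{(1)}(\al,\be,\ga_k)$ as a subquotient of $\Mon^{(n)}(\al,\be,\ga)$. Geometrically, fixing the other variables at generic values and letting $x_k$ vary reduces $E_C$ to a Gauss-type equation, and the loops $\rho_k$ and $\rho_0$ project onto the two standard generators of $\pi_1$ of the resulting thrice-punctured line. Algebraically, I expect to identify from the tensor form of $M_k$ and the rank-one form of $M_0$ in Fact \ref{fact:rep-mat} a two-dimensional subquotient on which $\langle M_k, M_0\rangle$ acts as Gauss monodromy. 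Since subquotients of finite irreducible groups remain finite and irreducibility of Gauss monodromy follows from (\ref{irred-1}), condition (\ref{condition-A}) is obtained.

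The crux of the argument is the classification of when $\Ref$ is finite, which yields (\ref{condition-B}). Every conjugate of $M_0$ is again a complex reflection with common special eigenvalue $\de_0^{(n)}(\al,\be,\ga)$ by Remark \ref{rem:M0-reflection}, so $\Ref$ is a complex reflection group. Finiteness forces $\de_0^{(n)}(\al,\be,\ga)$ to be a root of unity, and more restrictively forces the products of pairs of distinct reflections to have finite order. Computing the eigenvalues of the products $M_0\cdot (M_J M_0 M_J^{-1})$ for index subsets $J\subset\{1,\dots,n\}$ via Fact \ref{fact:rep-mat} reduces these constraints to conditions on the list
\[
  \ga_1,\dots,\ga_n,\quad \be\al^{-1},\quad \de_0^{(n)}(\al,\be,\ga).
\]
I expect to prove by induction on $n$, restricting $M_0$ to a suitable $2^{n-1}$-dimensional invariant subspace of $\langle M_1,\dots,M_{n-1}\rangle$ to recover the $(n-1)$-variable reflection subgroup, that $\Ref$ is finite if and only if at least $n$ entries of this list equal $-1$; this is exactly condition (\ref{condition-B}).

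For sufficiency, (\ref{condition-A}) gives that each $\ga_k$ is a root of unity and, in combination with the $-1$ entries supplied by (\ref{condition-B}), forces (\ref{irred-2}) to hold, so $\Mon$ is irreducible; finiteness of $\Ref$ from (\ref{condition-B}) combined with the finiteness of the abelian quotient yields finiteness of $\Mon$. The main obstacle is the reflection-subgroup classification in the third paragraph: one must pin down the exact list of special quantities and prove that the count ``at least $n$ equal to $-1$'' is sharp. This is precisely where the argument diverges from \cite{Kato}, since for $n\geq 3$ the additional commutation relations in Fact \ref{pi1} and the appearance of the mixed invariant $\de_0^{(n)}(\al,\be,\ga)$, which has no direct analogue in the $n=2$ (Appell $F_4$) case, force a genuinely new analysis.
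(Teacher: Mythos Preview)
Your broad architecture --- reduce to the reflection subgroup $\Ref$, realize smaller monodromy groups as subquotients --- matches the paper. The gap is in your proposed classification of when $\Ref^{(n)}$ is finite. For sufficiency you never say \emph{how} (A)$+$(B) makes $\Ref$ finite; the paper's mechanism is that whenever two of $\ga_1,\dots,\ga_n,\be\al^{-1}$ equal $-1$, one exhibits an explicit $\Ref$-invariant splitting $\C^{2^n}=W^{+}\oplus W^{-}$ on which $\Ref^{(n)}$ acts as a direct product of two copies of an $\Ref^{(n-1)}$ (Lemmas~\ref{lem:red-1} and~\ref{lem:red-2}), and iterating reduces to $\Ref^{(1)}$ or $\Ref^{(2)}$, where (A) or Kato's $F_4$ result (Fact~\ref{fact:F4-fin-irr}) applies. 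Your eigenvalue computation for products $M_0\cdot M_J M_0 M_J^{-1}$ yields only necessary conditions for finiteness, not sufficient ones: knowing that all such pairwise products have finite order does not make the group they generate finite.

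For necessity of (B), the induction you sketch does not close: restricting to $n-1$ of the variables gives constraints involving $\de_0^{(n-1)}$, which differs from $\de_0^{(n)}$ by a factor $-\ga_n$, so the two lists of $n+1$ and $n+2$ quantities do not align and the count cannot be propagated. The paper instead descends directly to $n=2$ via Corollary~\ref{cor:monodromy-subgroup} and applies Kato's condition (B$'$). The decisive ingredient you are missing is Lemma~\ref{lem:F4-fin-1}: the change of variables $x_n\mapsto 1/x_n$ in $E_C$ shows that groups such as $\Mon^{(2)}(\al,\al\ga_k^{-1},(\ga_i,\ga_j))$ and $\Mon^{(2)}(\be,\be\ga_k^{-1},(\ga_i,\ga_j))$ are \emph{also} finite, and it is only by combining Kato's constraints for these auxiliary groups with those for $\Mon^{(2)}(\al,\be,(\ga_i,\ga_j))$ that one forces all but two of the $\ga_k$ to be $-1$ (Lemma~\ref{lem:gamma(-1)}). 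Without this symmetry input the subquotient structure alone does not deliver the sharp count. (Incidentally, $\de_0^{(2)}$ appears explicitly in Kato's criterion, contrary to your final sentence; the genuine novelty for $n\ge 3$ is rather that $\de_0^{(n)}=-1$ alone no longer suffices, cf.\ Remark~\ref{rem:F4-FC}(ii).)
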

On the other hand, 
for $n=2$ (Appell's $F_4$), the finite irreducible condition was given by Kato \cite{Kato}. 
\begin{Fact}[{\cite[Theorem 1]{Kato}}]
  \label{fact:F4-fin-irr}
  The monodromy group $\Mon^{(2)}(\al,\be,(\ga_1 ,\ga_2 ))$ is finite irreducible 
  if and only if the following two conditions hold:
  \begin{itemize}
  \item[(\ref{condition-A})] 
    $\Mon^{(1)}(\al,\be,\ga_1)$ and $\Mon^{(1)}(\al,\be,\ga_2)$
    are finite irreducible; 
  \item[(\ref{condition-B}')]
    $\de_0^{(2)}(\al,\be,(\ga_1,\ga_2))=-1$, or 
    at least two of $\ga_1$, $\ga_2$, $\be\al^{-1}$ are $-1$. 
  \end{itemize}
\end{Fact}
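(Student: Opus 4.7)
The plan is to analyze $\Mon^{(2)}(\al,\be,(\ga_1,\ga_2))$ through the normal reflection subgroup $\Ref$ generated by all conjugates of $M_0$. Since $M_0$ is a pseudo-reflection with three eigenvalues equal to $1$ and one ``special'' eigenvalue $\de_0^{(2)}(\al,\be,(\ga_1,\ga_2))$, and since $\Ref$ is normal in $\Mon^{(2)}$, irreducibility of $\Mon^{(2)}$ forces $\Ref$ to act irreducibly on $\C^4$. When $\Mon^{(2)}$ is finite, $\Ref$ becomes a finite irreducible complex reflection group in dimension $4$, so the Shephard--Todd classification becomes the central tool.

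For the necessity of (A), I would restrict to the sub-loops $\rho_0$ and $\rho_k$ for each $k\in\{1,2\}$. After projecting onto a natural $2$-dimensional invariant subquotient, which arises by degenerating $x_j\to 0$ for $j\ne k$ (at the level of the system $E_C(a,b,c)$ this amounts to the confluence recovering ${}_2 E_1(a,b,c_k)$), the pair $\langle M_0,M_k\rangle$ induces an action isomorphic to the Gauss monodromy $\Mon^{(1)}(\al,\be,\ga_k)$. Finite irreducibility of this subquotient of a finite irreducible group yields (A); the irreducibility condition (\ref{irred-2}) for $n=2$ automatically implies its analogue for each Gauss factor.

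For the necessity of (B'), observe that if $\de_0^{(2)}\ne -1$ then $M_0$ is a pseudo-reflection of order strictly greater than $2$. Going through the Shephard--Todd list of $4$-dimensional finite irreducible complex reflection groups and matching generator orders, traces, and the explicit entries of $M_0,M_1,M_2$ from Fact \ref{fact:rep-mat} (with $n=2$), one checks that at least two of $\ga_1,\ga_2,\be\al^{-1}$ are forced to equal $-1$. Conversely, assume (A) and (B'). If $\de_0^{(2)}=-1$, then $M_0$ is an honest order-$2$ reflection; (A) constrains $\ga_k$ and $\be\al^{-1}$ to be roots of unity of small order, and one identifies $\Ref$ with a specific Shephard--Todd group (for instance from the imprimitive series $G(m,p,2)$ extended by a rank-$2$ summand, or one of the exceptional primitive $4$-dimensional reflection groups), concluding that $\Mon^{(2)}$ is a finite cyclic extension of $\Ref$. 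In the remaining case, where two of $\ga_1,\ga_2,\be\al^{-1}$ equal $-1$, I would substitute into Fact \ref{fact:rep-mat} and compute $\langle M_0,M_1,M_2\rangle$ directly, verifying finiteness by exhibiting explicit relations (orders of generators and of short words in them) that match a known finite reflection or monomial group.

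The main obstacle is the sufficiency step in the first case: establishing finiteness requires either a careful appeal to Shephard--Todd together with matching of eigenvalue and trace data to candidate groups, or an intricate direct computation. A secondary subtlety is that condition (B') is a disjunction, whereas the general-$n$ condition (\ref{condition-B}) in Theorem \ref{th:fin-irr} is a single combined count; reconciling these two formulations in the boundary case $n=2$, where dimension $4$ admits both imprimitive and exceptional reflection groups not available in higher rank, is where the $n=2$ story genuinely differs from the $n\ge 3$ case.
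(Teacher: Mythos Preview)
This statement is not proved in the present paper: it is labeled a \emph{Fact} and cited as \cite[Theorem 1]{Kato}. There is therefore no proof in this paper to compare your proposal against. Your outline is broadly in the spirit of Kato's original argument---working through the normal reflection subgroup $\Ref$, reducing condition~(\ref{condition-A}) to a subquotient realizing the Gauss monodromy, and invoking the Shephard--Todd classification of finite irreducible complex reflection groups in dimension~$4$ to handle (\ref{condition-B}'). The restriction-to-a-subquotient idea you use for the necessity of (\ref{condition-A}) is also the mechanism this paper employs in its Lemma preceding Corollary~\ref{cor:monodromy-subgroup} for general~$n$.

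That said, what you have written is a strategy rather than a proof. The sentence ``going through the Shephard--Todd list \dots\ one checks that at least two of $\ga_1,\ga_2,\be\al^{-1}$ are forced to equal $-1$'' hides the entire content of the necessity of (\ref{condition-B}'); likewise the sufficiency step, which you yourself flag as ``the main obstacle,'' is only gestured at. Kato's actual proof carries out these case analyses in detail, and the paper under review does not reproduce them. If your goal is to supply an independent proof, you will need to execute the Shephard--Todd matching explicitly: list the $4$-dimensional irreducible reflection groups, compute the orders and eigenvalue data of $M_0,M_1,M_2$ from Fact~\ref{fact:rep-mat}, and rule cases in or out one by one. Without that, the proposal does not constitute a proof.
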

\begin{Rem}\label{rem:F4-FC}~
  \begin{enumerate}[(i)]
  \item The monodromy group $\Mon^{(1)}(\al,\be,\ga_k)$ is 
    nothing but that for Gauss' hypergeometric function 
    ${}_2 F_1 (a,b,c_k ;x)$. 
    Its irreducibility condition is known to be
    \begin{align*}
      \al-1,\ \al-\ga_k ,\ \be-1, \ \be-\ga_k \neq 0 
    \end{align*}
    (see also (\ref{irred-2})); 
    the finiteness conditions (the so-called ``Schwarz list'') are provided in \cite{Schwarz}.
    Interested readers are referred to \cite{Bod}, in which an accessible list is available.
  \item If $n=2$, then (\ref{condition-A}) and $\de_0^{(2)}(\al,\be,(\ga_1 ,\ga_2))=-1$ imply the finiteness of 
    the monodromy group
    $\Mon^{(2)}(\al,\be,(\ga_1 ,\ga_2))$. 
    However, if $n\geq 3$, (\ref{condition-A}) and $\de_0^{(n)}(\al,\be,\ga)=-1$ are not sufficient 
    for the finiteness. 
    Thus, Theorem \ref{th:fin-irr} is not a direct generalization of 
    Fact \ref{fact:F4-fin-irr}. 
  \end{enumerate}
\end{Rem}

\section{Reflection subgroup}\label{section:Ref}
In this section, 
we assume that the irreducibility condition (\ref{irred-2}) holds. 

As in \cite{BH}, we refer to a matrix $g \in \GL_n(\C)$ as
a \emph{reflection} if $g - E_n$ has rank one. 
We refer to the determinant of a reflection $g$ as the 
special eigenvalue of $g$. 
As mentioned in Remark \ref{rem:M0-reflection}, 
$M_0^{(n)}(\al,\be,\ga)$ is a reflection with 
the special eigenvalue $\de_0^{(n)}(\al,\be,\ga)$. 

Let $\Ref =\Ref^{(n)}(\al,\be,\ga) \subset \Mon$ be the smallest normal subgroup containing $M_0$, 
that is, a subgroup generated by reflections $gM_0g^{-1} \ (g \in \Mon)$. 
The reflection subgroup was introduced in \cite{BH} for the generalized hypergeometric 
function $_{n}F_{n-1}$ and subsequently considered in \cite{Kato} for Appell's $F_4$. 
Then, we introduced the \emph{reflection subgroup} $\Ref$ for the study of $F_C$
in \cite{GK-FC-Zariski}. 
\begin{Fact}[{\cite[Proposition 2.15]{GK-FC-Zariski}}] \label{fact-ref}
  The monodromy group $\Mon^{(n)}(\al,\be,\ga)$ is finite 
  if and only if $\Ref^{(n)}(\al,\be,\ga)$ is finite.
\end{Fact}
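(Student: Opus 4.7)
The direction ``$\Mon$ finite $\Rightarrow$ $\Ref$ finite'' is immediate, as $\Ref \leq \Mon$. For the converse, assume $\Ref$ is finite; the strategy is to show $\Mon/\Ref$ is finite, whence $\Mon$ is finite via $1 \to \Ref \to \Mon \to \Mon/\Ref \to 1$. First, observe that $\Mon/\Ref$ is abelian and finitely generated: the cosets $\bar{M}_0, \bar{M}_1, \ldots, \bar{M}_n$ generate it, $\bar{M}_0$ is trivial since $M_0 \in \Ref$, and by Fact \ref{pi1} the matrices $M_i$ and $M_j$ commute for $i,j \geq 1$. It therefore suffices to verify that each $\bar{M}_k$ has finite order.

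The key technical step is to prove that $\Ref$ acts irreducibly on $\C^{2^n}$. By Fact \ref{fact:rep-mat}, $M_0 = E_{2^n} - \ee_{1,\ldots,1}\tp{\bfv}$, where $\bfv \neq \zero$: indeed, its $I=(0,\ldots,0)$-entry is $(-1)^n(\al-1)(\be-1)\ga_1\cdots\ga_n/(\al\be) \neq 0$ by (\ref{irred-2}). Applying Maschke's theorem, any reducibility of $\Ref$ would yield a decomposition $\C^{2^n} = W \oplus W'$ with both summands nonzero and $\Ref$-invariant. Invariance of $W$ under each conjugate reflection $gM_0g^{-1}$ (with fixed hyperplane $gH$, $H = \ker\tp{\bfv}$, and moving direction $g\ee_{1,\ldots,1}$) forces for each $g \in \Mon$ that either $g\ee_{1,\ldots,1} \in W$ or $W \subseteq gH$; together with the analogous statement for $W'$ and the fact $\bfv \neq \zero$, this implies $\Mon\ee_{1,\ldots,1} \subseteq W \cup W'$. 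A case analysis exploiting the right-$\Ref$-coset structure of the resulting partition of $\Mon$, combined with the $\Mon$-irreducibility under (\ref{irred-2}) (so that $\bigcap_{g \in \Mon} gH = 0$ and the span of $\Mon\ee_{1,\ldots,1}$ is $\C^{2^n}$), shows that one of $W, W'$ must in fact contain the full orbit $\Mon\ee_{1,\ldots,1}$, contradicting the properness of the decomposition.

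Granted irreducibility of $\Ref$, the conclusion is essentially formal. Since $\Ref$ is normal in $\Mon$, conjugation by $M_k$ induces an automorphism of the finite group $\Ref$; as $\mathrm{Aut}(\Ref)$ is finite, some power $M_k^{N_k}$ centralizes $\Ref$. By Schur's lemma, $C_{\GL_{2^n}(\C)}(\Ref) = \C^* \cdot E_{2^n}$, so $M_k^{N_k} = \la_k E_{2^n}$ for some $\la_k$. But $M_k$ has eigenvalues $1$ and $\ga_k^{-1}$, each with multiplicity $2^{n-1}$, so $M_k^{N_k}$ has eigenvalues $1$ and $\ga_k^{-N_k}$; for this to be scalar, $\la_k = 1 = \ga_k^{-N_k}$, giving $M_k^{N_k} = E_{2^n} \in \Ref$. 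Thus every $\bar{M}_k$ has finite order, $\Mon/\Ref$ is a finite abelian group, and $\Mon$ is finite.

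The main obstacle is the irreducibility of $\Ref$: extracting a contradiction from a hypothetical nontrivial $\Ref$-invariant decomposition requires a careful interplay between the rank-one structure of $M_0$, the non-vanishing of $\bfv$, the $\Mon$-irreducibility, and the coset structure of $\Mon$ modulo $\Ref$. Once this is established, the Schur-lemma-and-eigenvalue argument is routine.
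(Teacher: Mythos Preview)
The paper does not itself prove this fact; it is quoted from \cite[Proposition 2.15]{GK-FC-Zariski}. That said, your argument has a genuine gap.

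Your central claim---that $\Ref$ acts irreducibly on $\C^{2^n}$---is false. Lemmas~\ref{lem:red-1} and~\ref{lem:red-2} of the present paper construct explicit nontrivial $\Ref$-invariant decompositions $\C^{2^n}=W^+\oplus W^-$ whenever at least two of $\ga_1,\dots,\ga_n,\be\al^{-1}$ equal $-1$ (with (\ref{irred-2}) still in force, and with $\Ref$ finite in many such cases by Proposition~\ref{prop:if-fin}). In those examples $\ee_{1,\dots,1}\in W^+$ while $M_n$ swaps $W^+$ and $W^-$, so $M_n\ee_{1,\dots,1}\in W^-$; the orbit $\Mon\,\ee_{1,\dots,1}$ is genuinely split between the two summands, and no ``case analysis exploiting the coset structure'' can push it into a single one. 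Your partition of $\Mon$ into $\{g:g\ee_{1,\dots,1}\in W\}$ and $\{g:g\ee_{1,\dots,1}\in W'\}$ as unions of left $\Ref$-cosets is correct, but nothing forces either piece to be empty.

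Fortunately the irreducibility of $\Ref$ is not needed. Your outer scaffolding is fine: $\Mon/\Ref$ is abelian on the generators $\bar M_1,\dots,\bar M_n$, and since $\mathrm{Aut}(\Ref)$ is finite, some $M_k^{N}$ with $N\geq 1$ centralizes $\Ref$, hence commutes with $M_0$. Now bypass Schur and argue directly from the rank-one form $M_0=E_{2^n}-\ee_{1,\dots,1}\,\tp{\bfv}$: any invertible $A$ commuting with $M_0$ must have $\ee_{1,\dots,1}$ as an eigenvector. But $M_k^{N}\ee_{1,\dots,1}=\ee_1\otimes\cdots\otimes G(\ga_k)^{N}\ee_1\otimes\cdots\otimes\ee_1$, and one checks that $G(\ga_k)^{N}\ee_1$ is proportional to $\ee_1$ only when $\ga_k^{N}=1$ (the $(1,2)$-entry of $G(\ga_k)^N$ equals $-\ga_k^{-1}(1-\ga_k^{-N})/(1-\ga_k^{-1})$ for $\ga_k\neq 1$), in which case $G(\ga_k)^{N}=E_2$ and $M_k^{N}=E_{2^n}$. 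The case $\ga_k=1$ is excluded since then $G(1)^N\ee_1=\tp{(-N,1)}$ is never proportional to $\ee_1$. Thus each $M_k$ has finite order outright, and $\Mon=\Ref\cdot\langle M_1,\dots,M_n\rangle$ is finite.
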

To discuss the finiteness of $\Mon$, it suffices to consider 
that of $\Ref$. 
We use the following two lemmas. 
Although the reducibility was shown in \cite[Lemmas 2.20 and 2.21]{GK-FC-Zariski}, 
we need more precise statements about direct product decompositions. 
\begin{Lem}\label{lem:red-1}
  If at least two of $\ga_1 ,\ldots ,\ga_n$ are $-1$, 
  then the action of $\Ref$ is reducible. 
  For example, if $\ga_{n-1}=\ga_n=-1$, then 
  we have the decomposition 
  \begin{align*}
    \Ref^{(n)}(\al,\be,(\ga_1,\dots ,\ga_{n-2},-1,-1)) \simeq 
    \left(\Ref^{(n-1)}(\al,\be,(\ga_1,\dots ,\ga_{n-2},-1)) \right)^2 .
  \end{align*}
\end{Lem}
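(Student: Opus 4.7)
The plan is to sharpen the reducibility of $\Ref^{(n)}$ established in \cite[Lemma 2.20]{GK-FC-Zariski} into an explicit block decomposition, and then to recognise each block as the smaller reflection subgroup $\Ref^{(n-1)}(\al,\be,(\ga_1,\dots,\ga_{n-2},-1))$. The first step is a direct calculation with Fact~\ref{fact:rep-mat}: at the specialisation $\ga_{n-1}=\ga_n=-1$, the sign $(-1)^{i_{n-1}+i_n}$ enters the formula for $v_I^{(n)}$ through a perfect square, so that $v_I^{(n)}(\al,\be,\ga)$ depends only on the truncation $I''=(i_1,\dots,i_{n-2})$ for every $I\neq(0,\dots,0)$. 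This $\ztwo^2$-symmetry on the last two coordinates is what produces the square in the conclusion.

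Next, I would pass to the eigenbasis $\{f_0,f_1\}$ of $G(-1)$ in the last two tensor factors, so that $M_{n-1}^{(n)}$ and $M_n^{(n)}$ become simultaneously diagonal. This yields a four-fold decomposition $\C^{2^n}=\bigoplus_{(j,k)\in\ztwo^2}V_{j,k}$ with $\dim V_{j,k}=2^{n-2}$, and all of $M_1^{(n)},\dots,M_n^{(n)}$ respect this finer decomposition. The symmetry of $\bfv^{(n)}$ from the first paragraph then lets me pair the four blocks into two $2^{n-1}$-dimensional subspaces $W_1,W_2$ that are preserved by $M_0^{(n)}$, hence by every reflection $gM_0^{(n)}g^{-1}$ in $\Ref^{(n)}$. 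Under natural identifications $\phi_i\colon W_i\to\C^{2^{n-1}}$ extracted from the first $n-2$ tensor factors, I would verify that each $\phi_i\circ M_k^{(n)}|_{W_i}\circ\phi_i^{-1}$ matches the corresponding generator of $\Mon^{(n-1)}(\al,\be,(\ga_1,\dots,\ga_{n-2},-1))$ given by Fact~\ref{fact:rep-mat}; in particular $M_0^{(n)}|_{W_i}$ reduces to $M_0^{(n-1)}$, yielding an embedding $\Ref^{(n)}\hookrightarrow\Ref^{(n-1)}\times\Ref^{(n-1)}$.

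To upgrade this embedding to the claimed isomorphism, I would exploit the involution $M_n^{(n)}$ (order two since $\ga_n=-1$), which in the $f$-basis acts by $+\mathrm{id}$ on one of $W_1,W_2$ and $-\mathrm{id}$ on the other: conjugation of $M_0^{(n)}$ by elements built from $M_n^{(n)}$ therefore produces reflections in $\Ref^{(n)}$ whose restrictions to $W_1$ and $W_2$ can be adjusted independently, giving a surjection onto the full product. The principal obstacle is identifying the correct pairing of the $V_{j,k}$ into $W_1$ and $W_2$: the asymmetric correction term $(\al-1)(\be-1)$ in $v_{(0,\dots,0)}^{(n)}$ obstructs the naive pairings by parity of $j+k$, and one must show that the precise shape of this correction is compatible with exactly one pairing. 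A secondary delicate point is establishing the independence of the two block actions, needed to conclude the full direct product rather than a diagonal subgroup inside $\Ref^{(n-1)}\times\Ref^{(n-1)}$.
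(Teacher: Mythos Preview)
Your ingredients are right---the eigenbasis $\{f_0,f_1\}$ of $G(-1)$ in the last two tensor slots, the observation that $v_I^{(n)}$ is independent of $(i_{n-1},i_n)$ for $I\neq(0,\dots,0)$, and the role of $M_n$---but the way you assemble them does not work. The gap is your claim that one can ``pair the four blocks $V_{j,k}$ into two $2^{n-1}$-dimensional subspaces $W_1,W_2$ that are preserved by $M_0^{(n)}$.'' No direct sum $V_{j_1,k_1}\oplus V_{j_2,k_2}$ is $M_0$-invariant. Since $M_0\ee_J=\ee_J-v_J^{(n)}\,\ee_{1,\dots,1}$ and $\ee_1\otimes\ee_1=\tfrac14(f_0+f_1)\otimes(f_0+f_1)$ has a nonzero component in \emph{each} $f_j\otimes f_k$, the vector $\ee_{1,\dots,1}$ lies in none of the six candidate pairings; and the functional $\ell(w)=\sum_J c_J v_J^{(n)}$ does not vanish on any $V_{j,k}$ (one computes $\ell(g_{I'',j,k})=\lambda_{I''}\neq 0$ for $I''\neq 0$ regardless of $(j,k)$). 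The asymmetric term $(\al-1)(\be-1)$ is not the obstruction---even for $I''\neq 0$ where the symmetry is perfect, applying $M_0$ to a vector in $V_{j_1,k_1}$ produces a multiple of $\ee_{1,\dots,1}$ that leaks into the other two blocks. Relatedly, your prediction that $M_n$ acts as $\pm\mathrm{id}$ on $W_1,W_2$ forces the pairing $W_1=V_{0,0}\oplus V_{1,0}$, $W_2=V_{0,1}\oplus V_{1,1}$, which fails for exactly this reason.

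The paper's fix is to take \emph{diagonal} combinations rather than direct sums of blocks: $W^{\pm}$ is spanned by the vectors $g_{I'',0,0}\pm g_{I'',1,1}$ and $g_{I'',1,0}\pm g_{I'',0,1}$. Then $\ee_{1,\dots,1}\in W^+$, so $M_0$ preserves $W^+$; and your symmetry shows $\ell$ vanishes on the differences, so $M_0$ acts \emph{trivially} on $W^-$. Crucially, $M_{n-1}$ and $M_n$ now \emph{swap} $W^+$ and $W^-$ (rather than acting by scalars), while $M_{n-1}M_n$ preserves each. The triviality of $M_0|_{W^-}$ together with this swap is what yields the direct product $\Ref^{(n)}=R^+\times R^-$ (conjugating $M_0$ by $M_n$ moves the nontrivial action to $W^-$ and makes it trivial on $W^+$, so the two factors are genuinely independent). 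After a further change of basis on each $W^{\pm}$, the restrictions of $M_1,\dots,M_{n-2},M_{n-1}M_n,M_0$ become exactly the generators $M_1^{(n-1)},\dots,M_{n-1}^{(n-1)},M_0^{(n-1)}$ of $\Mon^{(n-1)}(\al,\be,(\ga_1,\dots,\ga_{n-2},-1))$, giving $R^{\pm}\simeq\Ref^{(n-1)}$.
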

\begin{Lem}\label{lem:red-2}
  If at least one of $\ga_1 ,\ldots ,\ga_n$ is $-1$ and 
  $\alpha \beta^{-1}$ is $-1$, 
  then the action of $\Ref$ is reducible. 
  For example, if $\ga_n=\be\al^{-1}=-1$, then 
  we have the decomposition 
  \begin{align*}
    \Ref^{(n)}(\al,\be,(\ga_1,\ga_2,\dots, \ga_{n-1},-1)) \simeq 
    \left(\Ref^{(n-1)}(\al,\be,(\ga_1,\ga_2,\dots ,\ga_{n-1})) \right)^2 .
  \end{align*}
\end{Lem}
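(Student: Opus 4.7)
The plan is to produce an explicit decomposition $\C^{2^n}=V_+\oplus W_-$, with $\dim V_+=\dim W_-=2^{n-1}$, which is preserved by $\Ref^{(n)}$ and on which the two restricted actions independently recover $\Ref^{(n-1)}$. The key initial observation, to be verified by direct substitution into the formula in Fact \ref{fact:rep-mat}, is that under the specializations $\be=-\al$ and $\ga_n=-1$ the vector $\bfv=\bfv^{(n)}(\al,-\al,(\ga_1,\dots,\ga_{n-1},-1))$ is symmetric in the last coordinate: for every $J=(j_1,\dots,j_{n-1})\in\{0,1\}^{n-1}$ the entries $v_{(J,0)}$ and $v_{(J,1)}$ coincide, and their common value equals $v_J^{(n-1)}(\al,-\al,(\ga_1,\dots,\ga_{n-1}))$. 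The cases $J=(0,\dots,0)$ and $J\neq(0,\dots,0)$ both reduce to the identities $(\al-1)(-\al-1)=1-\al^2$ and $\al\be=-\al^2$.

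Using this symmetry I would pass to the basis $\{f_+,f_-\}:=\{\ee_0-\ee_1,\,\ee_1\}$ on the last $\C^2$-factor of $\C^{2^n}=\C^{2^{n-1}}\otimes\C^2$ and set $V_+:=\C^{2^{n-1}}\otimes f_+$ and $W_-:=\C^{2^{n-1}}\otimes f_-$. The symmetry $v_{(J,0)}=v_{(J,1)}$ says exactly that the linear functional $\bfv^T$ vanishes on $V_+$, so $V_+$ lies in the fixed hyperplane of $M_0$ and $M_0$ acts as the identity on $V_+$. Since the reflection vector $\ee_{1,\dots,1}$ of $M_0$ lies in $W_-$, the restriction $M_0|_{W_-}$ identifies, via $u\otimes f_-\leftrightarrow u$, with $M_0^{(n-1)}(\al,-\al,(\ga_1,\dots,\ga_{n-1}))$. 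For $k<n$ the matrix $M_k$ acts on the first $n-1$ tensor factors only and hence preserves both $V_+$ and $W_-$, acting as $M_k^{(n-1)}$ on each under the analogous identifications. A short calculation shows that $M_n$ interchanges $f_+$ and $f_-$, so $M_n$ acts on $V_+\oplus W_-$ as the swap.

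The block-diagonal linear maps with respect to $V_+\oplus W_-$ are closed under conjugation both by block-diagonal matrices and by the swap (which merely exchanges the two diagonal blocks). Because $M_0,M_1,\dots,M_{n-1}$ are all block-diagonal and $M_n$ is the swap, an induction on word length in the generators of $\Mon^{(n)}$ yields that every conjugate $gM_0g^{-1}$, and hence all of $\Ref^{(n)}$, preserves the splitting. This gives an injective group homomorphism $\Ref^{(n)}\hookrightarrow\GL(V_+)\times\GL(W_-)\cong\GL_{2^{n-1}}(\C)^{2}$. Under it $M_0\mapsto(\mathrm{id},M_0^{(n-1)})$ and $M_nM_0M_n\mapsto(M_0^{(n-1)},\mathrm{id})$; conjugation by $M_k$ for $k<n$ corresponds to simultaneous $M_k^{(n-1)}$-conjugation on both factors, while conjugation by $M_n$ swaps the two factors. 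Iterating, the image contains both $\{\mathrm{id}\}\times\Ref^{(n-1)}$ and $\Ref^{(n-1)}\times\{\mathrm{id}\}$, and these generate $\Ref^{(n-1)}\times\Ref^{(n-1)}$; this establishes the claimed isomorphism.

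The main obstacle is the invariance step: although $M_n$ does not preserve $V_+$ or $W_-$ individually, conjugation by $M_n$ merely swaps the two diagonal blocks of a block-diagonal matrix and therefore preserves block-diagonality. Once this point is recognized, the rest of the proof is essentially bookkeeping.
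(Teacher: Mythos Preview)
Your proposal is correct and follows essentially the same route as the paper: your decomposition $V_+\oplus W_-$ coincides (up to relabeling and scalars) with the paper's $W^-\oplus W^+$, and your key symmetry $v_{(J,0)}=v_{(J,1)}$ is precisely the computation the paper performs to show $M_0$ acts trivially on one summand and as $M_0^{(n-1)}$ on the other. The only step you leave implicit is the reverse inclusion $\phi(\Ref^{(n)})\subset\Ref^{(n-1)}\times\Ref^{(n-1)}$, but this follows at once from your own block analysis, since each conjugate $gM_0g^{-1}$ has diagonal blocks that are either trivial or $\Mon^{(n-1)}$-conjugates of $M_0^{(n-1)}$.
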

To prove these lemmas, we use the same decompositions of $\C^{2^n}$ into 
$\Ref$-invariant subspaces as \cite{GK-FC-Zariski}.  
Recall that for $i,j=1,\dots ,n$, we have 
$M_i M_j =M_j M_i$ by the relation in Fact \ref{pi1}. 

\begin{proof}[Proof of Lemma \ref{lem:red-1}]
  Without loss of generality, 
  we may assume $\ga_{n-1}=\ga_n=-1$. 
  This implies $M_{n-1}^2 =M_n^2 =E_{2^n}$. 
  For each $(i_1 ,\dots ,i_{n-2})\in \ztwo^{n-2}$, we set  
  \begin{align*}
    g_{i_1,\dots ,i_{n-2},0,0} 
    &
      =\ee_{i_1} \otimes \cdots \otimes \ee_{i_{n-2}} \otimes \ee_0 \otimes \ee_0, \\
    g_{i_1,\dots ,i_{n-2},1,0} 
    &
      =\ee_{i_1} \otimes \cdots \otimes \ee_{i_{n-2}} \otimes (2\ee_1 -\ee_0) \otimes \ee_0 , \\
    g_{i_1,\dots ,i_{n-2},0,1} 
    &
      =\ee_{i_1} \otimes \cdots \otimes \ee_{i_{n-2}} \otimes \ee_0 \otimes (2\ee_1 -\ee_0) , \\
    g_{i_1,\dots ,i_{n-2},1,1} 
    &=\ee_{i_1} \otimes \cdots \otimes \ee_{i_{n-2}} \otimes (2\ee_1 -\ee_0) \otimes (2\ee_1 -\ee_0) ,\\
    f_{14;i_1 ,\dots ,i_{n-2}}^{\pm} &=g_{i_1,\dots ,i_{n-2},0,0} \pm g_{i_1,\dots ,i_{n-2},1,1} , \\
    f_{23;i_1 ,\dots ,i_{n-2}}^{\pm} &=g_{i_1,\dots ,i_{n-2},1,0} \pm g_{i_1,\dots ,i_{n-2},0,1} .
  \end{align*}
  We consider a direct sum decomposition of $\C^{2^n}$: 
  \begin{align*}
    \C^{2^n}=W^{+}\oplus W^{-} ;\quad 
    W^{\pm} = \bigoplus_{(i_1 ,\dots ,i_{n-2})} \C f_{14;i_1 ,\dots ,i_{n-2}}^{\pm}
    \oplus \bigoplus_{(i_1 ,\dots ,i_{n-2})} \C f_{23;i_1 ,\dots ,i_{n-2}}^{\pm} .
  \end{align*}
  The dimension of each factor is $2^{n-2} + 2^{n-2}=2^{n-1}$. Note that 
  \begin{align*}
    \ee_{1,\dots ,1}
    =\frac{1}{4} (f_{14;1,\dots ,1}^{+}+f_{23;1,\dots ,1}^{+} ) \in W^{+} .
  \end{align*}
  As was shown in \cite{GK-FC-Zariski}, we obtain the following equalities: 
  \begin{align*}
    &M_k \cdot f_{\ast; i_1,\dots ,i_k,\dots ,i_{n-2}}^{\pm}=
    \begin{cases}
      f_{\ast; i_1,\dots, 0,\dots ,i_{n-2}}^{\pm} & (i_k=0) \\
      -\gamma_k^{-1} f_{\ast; i_1 ,\dots ,0,\dots ,i_{n-2}}^{\pm} 
      +\gamma_k^{-1} f_{\ast; i_1 ,\dots ,1,\dots,i_{n-2}}^{\pm} & (i_k=1),
    \end{cases} \\
    &M_{n-1} \cdot f_{14;i_1 ,\dots ,i_{n-2}}^{\pm} = f_{14;i_1 ,\dots ,i_{n-2}}^{\mp},\quad 
    M_{n-1} \cdot f_{23;i_1 ,\dots ,i_{n-2}}^{\pm} = -f_{23;i_1 ,\dots ,i_{n-2}}^{\mp} , \\ 
    &M_n \cdot f_{\ast;i_1 ,\dots ,i_{n-2}}^{\pm} = f_{\ast;i_1 ,\dots ,i_{n-2}}^{\mp} , \\
    &M_0 \cdot f_{\ast;i_1 ,\dots ,i_{n-2}}^{-}=f_{\ast;i_1 ,\dots ,i_{n-2}}^{-}\in W^{-} , \\
    &M_0 \cdot f_{14;i_1 ,\dots ,i_{n-2}}^{+}=f_{14;i_1 ,\dots ,i_{n-2}}^{+}
    -2\lambda_{0;i_1 ,\dots ,i_{n-2}}\ee_{1,\dots ,1} \in W^{+}, \\ 
    &M_0 \cdot f_{23;i_1 ,\dots ,i_{n-2}}^{+}=f_{23;i_1 ,\dots ,i_{n-2}}^{+}
    -2(2\lambda_{1;i_1 ,\dots ,i_{n-2}} -\lambda_{0;i_1 ,\dots ,i_{n-2}})\ee_{1,\dots ,1} \in W^{+} ,
  \end{align*}
  where $\ast =14$ or $23$, $1\leq k \leq n-2$, and 
  \begin{align*}
    &\lambda_{1;i_1 ,\dots ,i_{n-2}} =(-1)^{n+i_1+\dots +i_{n-2}}
    \frac{(\alpha \beta +(-1)^{i_1+\dots +i_{n-2}} 
      \prod_{k=1}^{n-2} \gamma_k^{i_k})\prod_{k=1}^{n-2} \gamma_k^{1-i_k}}{\alpha \beta} ,\\
    &\lambda_{0;i_1 ,\dots ,i_{n-2}} =\left\{ 
      \begin{array}{ll}
        (-1)^n \frac{(\alpha-1)(\beta-1)\prod_{k=1}^{n-2} \gamma_k}{\alpha \beta}
        & ((i_1 ,\dots ,i_{n-2})=(0,\dots ,0)) \\
        \lambda_{1;i_1 ,\dots ,i_{n-2}}
        & ((i_1 ,\dots ,i_{n-2})\neq (0,\dots ,0)) .
      \end{array}
    \right. 
  \end{align*}
  These equalities imply that $W^{\pm}$ are non-trivial $\Ref$-subspaces 
  (see \cite{GK-FC-Zariski}). 
  Because
  \begin{itemize}
  \item $M_0$ and $(M_{n-1}M_n)M_0(M_{n-1}M_n)^{-1}$ act trivially on $W^{-}$,
  \item $M_n M_0 M_n^{-1}$ and $M_{n-1}M_0 M_{n-1}^{-1}$ act trivially on $W^{+}$, 
  \end{itemize}
  we have the direct product decomposition
  \begin{align}
    \label{eq:direct-prod-1}
    &\Ref^{(n)}(\al,\be,(\ga_1,\dots ,\ga_{n-2},-1,-1))\\
    \nonumber
    &=\left\langle \left. 
      \begin{array}{c}
        gM_0 g^{-1},g M_{n-1} M_0 M_{n-1}^{-1} g^{-1}, \\
        g M_n M_0 M_n^{-1} g^{-1},g(M_{n-1}M_n)M_0(M_{n-1}M_n)^{-1} g^{-1}
      \end{array}
    \right| 
    g=M_1^{j_1} \cdots M_{n-2}^{j_{n-2}} ,\ j_k\in \Z
    \right\rangle \\
    \nonumber
    &=\left\langle 
      gM_0 g^{-1},g(M_{n-1}M_n)M_0(M_{n-1}M_n)^{-1} g^{-1}
      \mid 
      g=M_1^{j_1} \cdots M_{n-2}^{j_{n-2}} ,\ j_k\in \Z
      \right\rangle \\
    \nonumber
    & \qquad \times \left\langle 
      g M_{n-1}M_0 M_{n-1}^{-1} g^{-1},
      g M_n M_0 M_n^{-1} g^{-1}
      \mid 
      g=M_1^{j_1} \cdots M_{n-2}^{j_{n-2}} ,\ j_k\in \Z
      \right\rangle \\
    \nonumber
    &=R^{+} \times R^{-} .
  \end{align}
  Here, $R^{+}$ (resp. $R^{-}$) acts trivially on $W^{-}$ (resp. $W^{+}$). 
  We retake the bases of $W^{\pm}$ as 
  \begin{align*}
    \tf_{i_1 ,\dots ,i_{n-2},i_{n-1}}^{\pm}
    =
    \begin{cases}
      f_{14;i_1 ,\dots ,i_{n-2}}^{\pm} & (i_{n-1}=0)\\
      \frac{1}{2}(f_{14;i_1 ,\dots ,i_{n-2}}^{\pm}+f_{23;i_1 ,\dots ,i_{n-2}}^{\pm}) &(i_{n-1}=1), 
    \end{cases}
  \end{align*}
  where $(i_1 ,\dots ,i_{n-2},i_{n-1})\in \{ 0,1\}^{n-1}$.
  Note that $\tf_{1 ,\dots ,1,1}^{+}=2\ee_{1,\dots ,1}$. 
  We consider the representation matrices of the actions by 
  $M_k$ ($1\leq k \leq n-2$) and $M_{n-1}M_n$ on $W^{\pm}$, 
  $M_0$ on $W^{+}$, and $M_n M_0 M_n^{-1}$ on $W^{-}$
  with respect to these bases. 
  \begin{itemize}
  \item For $1\leq k \leq n-2$, the representation matrix of 
    the action by $M_k$ is 
    \begin{align*}
      E_2 \ot \cdots \ot E_2 \ot \underset{k\textrm{th}}{G(\ga_k)} \ot E_2 \ot \cdots \ot 
      \underset{(n-1)\textrm{th}}{E_2} 
      =M_k^{(n-1)}(\al,\be,(\ga_1,\dots ,\ga_{n-2},-1)) .
    \end{align*}
  \item Since we have
    \begin{align*}
      M_{n-1}M_n \cdot \tf_{i_1 ,\dots ,i_{n-2},i_{n-1}}^{\pm} =
      \begin{cases}
        \tf_{i_1 ,\dots ,i_{n-2},0}^{\pm} & (i_{n-1}=0) \\
        \tf_{i_1 ,\dots ,i_{n-2},0}^{\pm}-\tf_{i_1 ,\dots ,i_{n-2},1}^{\pm} & (i_{n-1}=1),
      \end{cases}
    \end{align*}
    the representation matrix of the action by $M_{n-1}M_n$ is 
    \begin{align*}
      \underbrace{E_2 \otimes \cdots \otimes E_2}_{n-2} \otimes
      \begin{pmatrix} 1&1 \\ 0&-1 \end{pmatrix} 
      &=\underbrace{E_2 \otimes \cdots \otimes E_2}_{n-2} \otimes G(-1) \\
      &=M_{n-1}^{(n-1)}(\al,\be,(\ga_1,\dots ,\ga_{n-2},-1)). 
    \end{align*}
  \item Since we have
    \begin{align*}
      M_0 \cdot \tf_{i_1 ,\dots ,i_{n-2},i_{n-1}}^{+} =
      \begin{cases}
        \tf_{i_1 ,\dots ,i_{n-2},0}^{+}
        -\la_{0;i_1 ,\dots ,i_{n-2}}\tf_{1 ,\dots ,1,1}^{+} & (i_{n-1}=0) \\
        \tf_{i_1 ,\dots ,i_{n-2},1}^{+}
        -\la_{1;i_1 ,\dots ,i_{n-2}}\tf_{1 ,\dots ,1,1}^{+} & (i_{n-1}=1),
      \end{cases}
    \end{align*}
    the representation matrix of the action by $M_0$ is 
    \begin{align}
      \label{eq:rep-mat-M0-1}
      E_{2^{n-1}}-\tp (\mathbf{0}, \dots ,\mathbf{0},\bfv') ,
    \end{align}
    where the $I=(i_1,\dots ,i_{n-2},i_{n-1})$-th entries of $\bfv'$ are 
    \begin{align*}
      \begin{cases}
        \lambda_{0;i_1 ,\dots ,i_{n-2}} 
        =(-1)^n \frac{(\alpha-1)(\beta-1)\prod_{k=1}^{n-2} \gamma_k}{\alpha \beta}
        =(-1)^{n-1} \frac{(\alpha-1)(\beta-1)\prod_{k=1}^{n-2} \gamma_k\cdot (-1)}{\alpha \beta}
        & (i_1=\cdots=i_{n-1}=0) \\
        \lambda_{0;i_1 ,\dots ,i_{n-2}}
        =(-1)^{n+i_1+\dots +i_{n-2}}\frac{(\alpha \beta +(-1)^{i_1+\dots +i_{n-2}} 
          \prod_{k=1}^{n-2} \gamma_k^{i_k})\prod_{k=1}^{n-2} \gamma_k^{1-i_k}}{\alpha \beta}
        &  \\
        \qquad \qquad 
        =(-1)^{n-1+|I|}\frac{(\alpha \beta +(-1)^{|I|} 
          \prod_{k=1}^{n-2} \gamma_k^{i_k})\prod_{k=1}^{n-2} \gamma_k^{1-i_k} \cdot (-1)}{\alpha \beta}
        & (i_{n-1}=0, \exists i_k\neq 0) \\ 
        \lambda_{1;i_1 ,\dots ,i_{n-2}}
        =(-1)^{n+i_1+\dots +i_{n-2}}\frac{(\alpha \beta +(-1)^{i_1+\dots +i_{n-2}} 
          \prod_{k=1}^{n-2} \gamma_k^{i_k})\prod_{k=1}^{n-2} \gamma_k^{1-i_k}}{\alpha \beta}
        &  \\
        \qquad \qquad 
        =(-1)^{n-1+|I|}\frac{(\alpha \beta +(-1)^{|I|} 
          \prod_{k=1}^{n-2} \gamma_k^{i_k} \cdot (-1))\prod_{k=1}^{n-2} \gamma_k^{1-i_k}}{\alpha \beta}
        & (i_{n-1}=1) .
      \end{cases}
    \end{align*}
    Each of these entries coincides with $v^{(n-1)}_{I}(\al,\be,(\ga_1,\dots ,\ga_{n-2},-1))$. 
    Thus, the representation matrix (\ref{eq:rep-mat-M0-1}) is nothing but 
    $M_0^{(n-1)}(\al,\be,(\ga_1,\dots ,\ga_{n-2},-1))$. 
  \item We consider the action by $M_n M_0 M_n^{-1}$ on $W^{-}$. 
    Using 
    \begin{align*}
      M_n \cdot \tf_{i_1 ,\dots ,i_{n-2},i_{n-1}}^{\pm}=\tf_{i_1 ,\dots ,i_{n-2},i_{n-1}}^{\mp} , 
    \end{align*}
    we have 
    \begin{align*}
      M_n M_0 M_n^{-1} \cdot \tf_{i_1 ,\dots ,i_{n-2},i_{n-1}}^{-} =
      \begin{cases}
        \tf_{i_1 ,\dots ,i_{n-2},0}^{-}
        -\la_{0;i_1 ,\dots ,i_{n-2}}\tf_{1 ,\dots ,1,1}^{-} & (i_{n-1}=0) \\
        \tf_{i_1 ,\dots ,i_{n-2},1}^{-}
        -\la_{1;i_1 ,\dots ,i_{n-2}}\tf_{1 ,\dots ,1,1}^{-} & (i_{n-1}=1).
      \end{cases}
    \end{align*}
    Similarly to the aforementioned discussion, 
    we can show that the representation matrix of the action by $M_n M_0 M_n^{-1}$ 
    coincides with $M_0^{(n-1)}(\al,\be,(\ga_1,\dots ,\ga_{n-2},-1))$. 
  \end{itemize}
  Therefore, the subgroups
  $R^{\pm}$ are isomorphic to the smallest normal subgroup of 
  $\Mon^{(n-1)}(\al,\be,(\ga_1,\dots ,\ga_{n-2},-1))$, 
  which contains $M_0^{(n-1)}(\al,\be,(\ga_1,\dots ,\ga_{n-2},-1))$. 
  This is nothing but $\Ref^{(n-1)}(\al,\be,(\ga_1,\dots ,\ga_{n-2},-1))$. 
  Thus, the decomposition (\ref{eq:direct-prod-1}) implies the lemma. 
\end{proof}

\begin{proof}[Proof of Lemma \ref{lem:red-2}]
  Without loss of generality, 
  we may assume $\ga_n=\be\al^{-1}=-1$. 
  Note that $M_n^2 =E_{2^n}$. 
  For each $(i_1 ,\dots ,i_{n-1})\in \ztwo^{n-1}$, we set 
  \begin{align*}
    h_{i_1,\dots ,i_{n-1},0} 
    &
      =\ee_{i_1} \otimes \ee_{i_2} \otimes \cdots \otimes \ee_{i_{n-1}} \otimes \ee_0 , \\
    h_{i_1,\dots ,i_{n-1},1} 
    &
      =\ee_{i_1} \otimes \ee_{i_2} \otimes \cdots \otimes \ee_{i_{n-1}} \otimes (2\ee_1 -\ee_0) , \\
    f_{12;i_1 ,\dots ,i_{n-1}}^{\pm} &=h_{i_1,\dots ,i_{n-1},0} \pm h_{i_1,\dots ,i_{n-1},1}  ,
  \end{align*}
  and consider a direct sum decomposition of $\C^{2^n}$: 
  \begin{align*}
    \C^{2^n}=W^{+}\oplus W^{-} ;\quad 
    W^{\pm} = \bigoplus_{(i_1,\dots ,i_{n-1})} \C f_{12;i_1 ,\dots ,i_{n-1}}^{\pm} .
  \end{align*}
  The dimension of each factor is $2^{n-1}$. Note that 
  \begin{align*}
    \ee_{1,\dots ,1}
    =\frac{1}{2}(h_{1,\dots ,1,0}+h_{1,\dots ,1,1})
    =\frac{1}{2}f_{12;1,\dots ,1}^{+} \in W^{+} .
  \end{align*}
  As was shown in \cite{GK-FC-Zariski}, we obtain the following equalities: 
  \begin{align*}
    M_k \cdot f_{12; i_1,\dots ,i_k,\dots ,i_{n-1}}^{\pm}
    &=
    \begin{cases}
      f_{12; i_1,\dots, 0,\dots ,i_{n-1}}^{\pm} & (i_k=0) \\
      -\gamma_k^{-1} f_{12; i_1 ,\dots ,0,\dots ,i_{n-1}}^{\pm} 
      +\gamma_k^{-1} f_{12; i_1 ,\dots ,1,\dots,i_{n-1}}^{\pm} & (i_k=1),
    \end{cases}\\
    M_n \cdot f_{12;i_1 ,\dots ,i_{n-1}}^{\pm} 
    &= f_{12;i_1 ,\dots ,i_{n-1}}^{\mp} , \\
    M_0 \cdot f_{12;i_1,\dots ,i_{n-1}}^{-}
    &=f_{12;i_1,\dots ,i_{n-1}}^{-}\in W^{-} , \\ 
    M_0 \cdot f_{12;i_1,\dots ,i_{n-1}}^{+}
    &=f_{12;i_1,\dots ,i_{n-1}}^{+}+2\lambda_{i_1 ,\dots ,i_{n-1}}\ee_{1,\dots ,1} \in W^{+} , 
  \end{align*}
  where $1\leq k \leq n-1$ and 
  \begin{align*}
    \lambda_{i_1 ,\dots ,i_{n-1}} =(-1)^{n+i_1+\dots +i_{n-1}}
    \frac{(\alpha \beta +(-1)^{i_1+\dots +i_{n-1}} 
      \prod_{k=1}^{n-1} \gamma_k^{i_k})\prod_{k=1}^{n-1} \gamma_k^{1-i_k}}{\alpha \beta} .
  \end{align*}
  These equalities imply that $W^{\pm}$ are non-trivial $\Ref$-subspaces 
  (see \cite{GK-FC-Zariski}). 
  Because
  \begin{itemize}
  \item $M_0$ acts trivially on $W^{-}$,  
  \item $M_n M_0 M_n^{-1}$ acts trivially on $W^{+}$, 
  \end{itemize}
  we have the direct product decomposition
  \begin{align}
    \label{eq:direct-prod-2}
    &\Ref^{(n)}(\al,\be,(\ga_1,\dots ,\ga_{n-1},-1))\\
    \nonumber
    &=\left\langle 
      gM_0 g^{-1},gM_n M_0 M_n^{-1} g^{-1}
      \mid 
      g=M_1^{j_1} \cdots M_{n-1}^{j_{n-1}} ,\ j_k\in \Z
      \right\rangle \\
    \nonumber
    &=\left\langle 
      gM_0 g^{-1}
      \mid 
      g=M_1^{j_1} \cdots M_{n-1}^{j_{n-1}} ,\ j_k\in \Z
      \right\rangle 
      \times \left\langle 
      g M_n M_0 M_n^{-1} g^{-1}
      \mid 
      g=M_1^{j_1} \cdots M_{n-1}^{j_{n-1}} ,\ j_k\in \Z
      \right\rangle \\
    \nonumber
    &=R^{+} \times R^{-} .
  \end{align}
  Here, $R^{+}$ (resp. $R^{-}$) acts trivially on $W^{-}$ (resp. $W^{+}$). 
  We consider the representation matrices of the actions by 
  $M_k$ ($1\leq k \leq n-1$) on $W^{\pm}$, 
  $M_0$ on $W^{+}$, and $M_n M_0 M_n^{-1}$ on $W^{-}$
  with respect to the bases $\{ f_{12;i_1 ,\dots ,i_{n-1}}^{\pm} \}$. 
  \begin{itemize}
  \item Similarly to the proof of Lemma \ref{lem:red-1}, 
    the representation matrix of the action by $M_k$ ($1\leq k \leq n-1$) is
    $M_k^{(n-1)}(\al,\be,(\ga_1,\dots ,\ga_{n-2},\ga_{n-1}))$. 
  \item We consider the action by $M_0$ on $W^{+}$. 
    Since      
    \begin{align*}
      M_0 \cdot f_{12;i_1 ,\dots ,i_{n-1}}^{+}
      = f_{12;i_1,\dots ,i_{n-1}}^{+}
      +\lambda_{i_1 ,\dots ,i_{n-1}}f_{12;1,\dots ,1}^{+} ,
    \end{align*}
    the representation matrix is 
    \begin{align}
      \label{eq:rep-mat-M0-2}
      E_{2^{n-1}}-{}^t (\mathbf{0}, \dots ,\mathbf{0},\bfv') ,
    \end{align}
    where the $I=(i_1,\dots ,i_{n-2},i_{n-1})$-th entry of $\bfv'$ is 
    \begin{align*}
      -\lambda_{i_1 ,\dots ,i_{n-1}}
      =(-1)^{n-1+i_1+\dots +i_{n-1}}
      \frac{(\alpha \beta +(-1)^{i_1+\dots +i_{n-1}} 
      \prod_{k=1}^{n-1} \gamma_k^{i_k})\prod_{k=1}^{n-1} \gamma_k^{1-i_k}}{\alpha \beta} .
    \end{align*}
    This entry is nothing but $v^{(n-1)}_I (\al,\be,(\ga_1,\dots ,\ga_{n-2},\ga_{n-1}))$ 
    if $I\neq (0 ,\dots ,0)$. 
    Because we have $\alpha +\beta=0$ by the assumption of the lemma, 
    the $(0 ,\dots ,0)$-th entry is written as 
    \begin{align*}
      -\lambda_{0 ,\dots ,0} 
      &=-(-1)^{n}\frac{(\alpha \beta +1)\prod_{k=1}^{n-1} \gamma_k}{\alpha \beta}
        =(-1)^{n-1} \frac{(\alpha -1)(\beta -1)\prod_{k=1}^{n-1} \gamma_k}{\alpha \beta} \\
      &=v^{(n-1)}_{0,\dots ,0}(\al,\be,(\ga_1,\dots ,\ga_{n-2},\ga_{n-1})) .
    \end{align*}
    This implies that 
    the representation matrix (\ref{eq:rep-mat-M0-2}) coincides with 
    the reflection 
    $M_0^{(n-1)}(\al,\be,(\ga_1,\dots ,\ga_{n-2},\ga_{n-1}))$. 
  \item Similarly to the proof of Lemma \ref{lem:red-1}, 
    we can show that the representation matrix 
    of the action by $M_n M_0 M_n^{-1}$ on $W^{-}$ is also 
    $M_0^{(n-1)}(\al,\be,(\ga_1,\dots ,\ga_{n-2},\ga_{n-1}))$. 
  \end{itemize}
  Therefore, the subgroups 
  $R^{\pm}$ are isomorphic to the reflection subgroup 
  $\Ref^{(n-1)}(\al,\be,(\ga_1,\dots ,\ga_{n-2},\ga_{n-1}))$; 
  the decomposition (\ref{eq:direct-prod-2}) implies the lemma. 
\end{proof}

From the proofs of Lemmas \ref{lem:red-1} and \ref{lem:red-2}, 
$\Mon^{(n)}(\al ,\be ,\ga)$ is imprimitive (see \cite[Definition 5.1]{BH}) 
if at least two of 
$\ga_1,\dots ,\ga_n ,\al \be^{-1}$ are $-1$. 
For $n\geq 3$, 
if $\Mon^{(n)}(\al ,\be ,\ga)$ is finite irreducible, 
then the condition (\ref{condition-B}) in Theorem \ref{th:fin-irr} 
implies that at least two of $\ga_1,\dots ,\ga_n ,\al \be^{-1}$ are $-1$. 
Thus, we obtain the following as a corollary of Theorem \ref{th:fin-irr}. 

\begin{Cor}
  We assume $n\geq 3$. 
  If the monodromy group $\Mon^{(n)}(\al,\be,\ga)$ is finite irreducible, 
  then it is imprimitive. 
\end{Cor}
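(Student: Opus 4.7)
The plan is to chain two observations that already appear in the paper, giving a short pigeonhole-style argument. First, assuming $\Mon^{(n)}(\al,\be,\ga)$ is finite irreducible, Theorem \ref{th:fin-irr}(\ref{condition-B}) provides that at least $n$ of the $n+2$ entries of the list $\ga_1,\dots,\ga_n,\be\al^{-1},\de_0^{(n)}(\al,\be,\ga)$ are equal to $-1$. Second, the remark stated just before the corollary (which is distilled from the proofs of Lemmas \ref{lem:red-1} and \ref{lem:red-2}) asserts that whenever at least two of the $n+1$ quantities $\ga_1,\dots,\ga_n,\al\be^{-1}$ equal $-1$, the group $\Mon^{(n)}(\al,\be,\ga)$ is imprimitive in the sense of \cite[Definition 5.1]{BH}.

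It therefore remains only to count. Since $\be\al^{-1}=-1$ is the same condition as $\al\be^{-1}=-1$, removing $\de_0^{(n)}(\al,\be,\ga)$ from the first list yields exactly the second list (up to this trivial renaming). Dropping one element from a list with at least $n$ entries equal to $-1$ still leaves at least $n-1$ entries equal to $-1$, and $n-1\geq 2$ since $n\geq 3$. Thus the hypothesis of the preceding remark is satisfied, and imprimitivity of $\Mon^{(n)}(\al,\be,\ga)$ follows.

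There is no real obstacle; this is essentially a bookkeeping corollary of Theorem \ref{th:fin-irr} combined with the imprimitivity observation already extracted in the previous section. The only point to keep straight is that $\de_0^{(n)}(\al,\be,\ga)$ is the one parameter in condition (\ref{condition-B}) that is not directly associated with an imprimitivity decomposition of the form built in Lemmas \ref{lem:red-1} and \ref{lem:red-2}, so one must allow for the possibility that it is among the $n$ forced values of $-1$ and compensate by using the slack $n\geq 3$.
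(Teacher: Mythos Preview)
Your argument is correct and is essentially identical to the paper's: the paper likewise deduces from condition (\ref{condition-B}) that at least two of $\ga_1,\dots,\ga_n,\al\be^{-1}$ equal $-1$, and then invokes the imprimitivity observation drawn from the proofs of Lemmas \ref{lem:red-1} and \ref{lem:red-2}. Your writeup makes the counting step more explicit, but the logic is the same.
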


\section{Proof of Theorem \ref{th:fin-irr}}
\subsection{Proof of ``if'' part}
We assume the conditions (\ref{condition-A}) and (\ref{condition-B}) in 
Theorem \ref{th:fin-irr}. 
When we assume the condition (\ref{condition-B}), 
it suffices to consider the following four cases 
without loss of generality:
\begin{enumerate}[(\ref{condition-B}-a)]
\item\label{condition-B-a} 
  $\ga_1=\cdots =\ga_n=-1$, 
\item\label{condition-B-b} 
  $\ga_2=\cdots =\ga_n=\be\al^{-1}=-1$, 
\item\label{condition-B-c} 
  $\ga_2=\cdots =\ga_n=\de_0^{(n)}(\al,\be,\ga)=-1$, 
\item\label{condition-B-d} 
  $\ga_3=\cdots =\ga_n=\be\al^{-1}=\de_0^{(n)}(\al,\be,\ga)=-1$.
\end{enumerate}

\begin{Lem}
  \label{lem:if-irred}
  If the conditions (\ref{condition-A}) and (\ref{condition-B}) hold, 
  then the irreducibility condition (\ref{irred-2}) holds, and hence 
  the monodromy group $\Mon^{(n)}(\al,\be,\ga)$ is irreducible. 
\end{Lem}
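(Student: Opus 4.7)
My plan is to verify the irreducibility condition (\ref{irred-2}) directly, by checking, for every index $I=(i_1,\dots,i_n)\in\ztwon$, that $\alpha\neq \prod_{k=1}^{n}\gamma_k^{i_k}$ and $\beta\neq \prod_{k=1}^{n}\gamma_k^{i_k}$. From condition (\ref{condition-A}), the irreducibility of each rank-$2$ Gauss system gives the basic constraints $\alpha,\beta\notin\{1,\gamma_k\}$ for every $k$. The strategy is then to use condition (\ref{condition-B}), which (up to relabeling) falls into the four subcases (\ref{condition-B-a})--(\ref{condition-B-d}) listed above, to collapse the $2^n$ possible products $\prod\gamma_k^{i_k}$ to a short list that can be ruled out by the basic constraints together with the relation imposed by the subcase. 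The final step is to invoke the result of \cite{HT} quoted in Subsection \ref{subsection:DE}: since (\ref{irred-2}) is the irreducibility criterion for the holonomic system $E_C(a,b,c)$, it is equivalent to the irreducibility of its monodromy representation.

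Concretely, in case (\ref{condition-B-a}) every $\gamma_k=-1$, so the products lie in $\{\pm 1\}$; the basic constraints (with any $k$ playing the role of the index producing $-1$) give $\alpha,\beta\neq\pm 1$. In case (\ref{condition-B-b}) one has $\gamma_2=\cdots=\gamma_n=-1$ and $\beta=-\alpha$, so products lie in $\{\pm 1,\pm\gamma_1\}$; the basic constraints for $k=1$ applied to both $\alpha$ and $\beta=-\alpha$ give the four inequalities $\alpha\neq\pm 1,\pm\gamma_1$, and analogously for $\beta$. In case (\ref{condition-B-c}), the formula $\delta_0^{(n)}(\alpha,\beta,\gamma)=(-1)^{n+1}\gamma_1\cdots\gamma_n/(\alpha\beta)$ combined with $\gamma_2=\cdots=\gamma_n=-1$ and $\delta_0=-1$ yields $\alpha\beta=-\gamma_1$; the only non-trivial products to check are $-1$ and $-\gamma_1$, and e.g.\ $\alpha=-\gamma_1$ would force $\beta=1$, contradicting (\ref{condition-A}).

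Case (\ref{condition-B-d}) is the most delicate: $\gamma_3=\cdots=\gamma_n=-1$, $\beta=-\alpha$, and $\delta_0=-1$ together force $\alpha^2=-\gamma_1\gamma_2$. Here the products lie in $\{\pm 1,\pm\gamma_1,\pm\gamma_2,\pm\gamma_1\gamma_2\}$. The first six cases are handled as in (\ref{condition-B-b}); the genuinely new case is $\alpha=\pm\gamma_1\gamma_2$. Squaring and using $\alpha^2=-\gamma_1\gamma_2$ gives $\gamma_1\gamma_2=-1$, whence $\alpha^2=1$ and so $\alpha\in\{\pm 1\}$, again contradicting (\ref{condition-A}). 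The same contradiction then eliminates $\beta=\pm\gamma_1\gamma_2$.

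The approach is pure case-checking, so the main obstacle is only bookkeeping: one must make sure that the list of subcases in (\ref{condition-B-a})--(\ref{condition-B-d}) is genuinely exhaustive up to permutation of the indices, and that in each case the formula for $\delta_0^{(n)}$ is correctly specialized before being combined with $\beta=-\alpha$ and the Gauss constraints. Once those are in place, no further hypothesis is needed, and the conclusion that $\Mon^{(n)}(\alpha,\beta,\gamma)$ is irreducible follows from the equivalence between (\ref{irred-2}) and irreducibility of $E_C(a,b,c)$ recorded in Subsection \ref{subsection:DE}.
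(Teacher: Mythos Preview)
Your proposal is correct and follows essentially the same approach as the paper: both proofs use the Gauss irreducibility constraints $\alpha,\beta\notin\{1,\gamma_k\}$ from (\ref{condition-A}) and then verify (\ref{irred-2}) case-by-case over the four subcases (\ref{condition-B}-\ref{condition-B-a})--(\ref{condition-B}-\ref{condition-B-d}), reducing the products $\prod\gamma_k^{i_k}$ to a small finite list and eliminating each via the relation coming from $\beta=-\alpha$ or $\delta_0^{(n)}=-1$. The only stylistic difference is in (\ref{condition-B}-\ref{condition-B-d}), where the paper factors directly (e.g.\ $\alpha-\gamma_1\gamma_2=\alpha(1-\beta)$) while you instead square and deduce $\gamma_1\gamma_2=-1$, but both arguments are equally short and valid.
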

\begin{proof}
  Since $\Mon^{(1)}(\al,\be,\ga_k)$ is irreducible 
  by the condition (\ref{condition-A}), we have 
  \begin{align}
    \label{eq:Gauss-irred}
    \al-1,\ \al-\ga_k ,\ \be-1, \ \be-\ga_k \neq 0 
  \end{align}
  for $k=1 ,\dots ,n$. 
  We consider the four cases 
  (\ref{condition-B}-\ref{condition-B-a})--(\ref{condition-B}-\ref{condition-B-d}). 
  \begin{itemize}
  \item[(\ref{condition-B}-\ref{condition-B-a})] 
    In this case, the condition (\ref{irred-2}) is reduced to 
    $\al\pm 1\neq 0$ and $\be\pm 1 \neq 0$. 
    These are nothing but (\ref{eq:Gauss-irred}) for $k=1$. 
  \item[(\ref{condition-B}-\ref{condition-B-b})] 
    The non-trivial conditions in (\ref{irred-2}) are 
    $\al +\ga_1 \neq 0$ and $\be +\ga_1 \neq 0$. 
    By $\be\al^{-1}=-1$, these are equivalent to 
    $-\be +\ga_1 \neq 0$ and $-\al +\ga_1 \neq 0$, respectively. 
    The last conditions follow from (\ref{eq:Gauss-irred}). 
  \item[(\ref{condition-B}-\ref{condition-B-c})] 
    Similarly to the case (\ref{condition-B}-\ref{condition-B-b}), 
    the non-trivial conditions in (\ref{irred-2}) are 
    $\al +\ga_1 \neq 0$ and $\be +\ga_1 \neq 0$. Because of the identity
    \begin{align*}
      -1 =\de_0^{(n)}(\al,\be,\ga)=(-1)^{n-1}\ga_1\cdots\ga_n\al^{-1}\be^{-1} 
      =\ga_1 \al^{-1} \be^{-1}, 
    \end{align*}
    we have $\ga_1=-\al \be$,  
    hence, we obtain $\al +\ga_1 =-\al (\be-1)\neq 0$ and 
    $\be +\ga_1 =-\be (\al-1)\neq 0$. 
  \item[(\ref{condition-B}-\ref{condition-B-d})] 
    In this case, the non-trivial conditions in (\ref{irred-2}) are 
    \begin{align*}
      \al +\ga_1 \neq 0,\quad 
      \al +\ga_2 \neq 0 ,\quad 
      \al -\ga_1\ga_2 \neq 0 ,\quad 
      \al +\ga_1\ga_2 \neq 0 ,
    \end{align*}
    and those obtained by replacing $\al$ with $\be$. 
    Because of the identity
    \begin{align*}
      -1 =\de_0^{(n)}(\al,\be,\ga)=(-1)^{n-1}\ga_1\cdots\ga_n\al^{-1}\be^{-1} 
      =-\ga_1\ga_2 \al^{-1} \be^{-1}
    \end{align*}
    and $\be\al^{-1}=-1$, we obtain 
    \begin{align*}
      \al +\ga_1 
      &=\al+\al \be \ga_2^{-1} =\al\ga_2^{-1}(\ga_2 +\be)
        =-\al\ga_2^{-1}(\al -\ga_2) \neq 0 , \\
      \al -\ga_1\ga_2 
      &=\al - \al \be =\al(1- \be)=-\al(\be-1)\neq 0, \\
      \al +\ga_1\ga_2 
      &=\al + \al \be =\al(1+ \be)=\al(1-\al)=-\al(\al-1)\neq 0 .
    \end{align*}
  \end{itemize}
\end{proof}

\begin{Prop}
  \label{prop:if-fin}
  If the conditions (\ref{condition-A}) and (\ref{condition-B}) hold, then 
  the reflection subgroup $\Ref^{(n)}(\al,\be,\ga)$ is finite. 
\end{Prop}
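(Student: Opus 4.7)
The plan is to treat the four subcases (\ref{condition-B}-\ref{condition-B-a})--(\ref{condition-B}-\ref{condition-B-d}) separately, and in each case to apply Lemmas \ref{lem:red-1} and \ref{lem:red-2} iteratively so as to decompose $\Ref^{(n)}(\alpha,\beta,\gamma)$ into a direct product of copies of $\Ref^{(1)}$ or $\Ref^{(2)}$. Finiteness of $\Ref^{(1)}(\alpha,\beta,\gamma_1)$ then follows from condition (\ref{condition-A}) via Fact \ref{fact-ref}, and finiteness of the relevant $\Ref^{(2)}$'s follows from Kato's Fact \ref{fact:F4-fin-irr}.

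The key preparatory observation I would record first is that both distinguished quantities $\beta\alpha^{-1}$ and $\delta_0^{(n)}(\alpha,\beta,\gamma)=(-1)^{n+1}\gamma_1\cdots\gamma_n/(\alpha\beta)$ are preserved under each reduction. For Lemma \ref{lem:red-1}, replacing $\gamma_{n-1}=\gamma_n=-1$ by a single $-1$ and lowering $n$ by one changes $(-1)^{n+1}\prod_k\gamma_k$ by a factor of $(-1)\cdot(-1)/(-1)=-1$, cancelled by the flipped sign $(-1)^{n+1}\to(-1)^n$; for Lemma \ref{lem:red-2}, dropping one $\gamma_n=-1$ produces the same cancellation. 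In particular the configuration of $-1$'s in (\ref{condition-B}) is inherited by the reduced parameters, so Lemma \ref{lem:if-irred} guarantees that the irreducibility (\ref{irred-2}) needed at each stage remains in force.

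In cases (\ref{condition-B}-\ref{condition-B-a}) and (\ref{condition-B}-\ref{condition-B-b}) I would reduce all the way to $\Ref^{(1)}$. In (\ref{condition-B}-\ref{condition-B-a}), iterating Lemma \ref{lem:red-1} exactly $n-1$ times decomposes $\Ref^{(n)}(\alpha,\beta,(-1,\dots,-1))$ into $2^{n-1}$ copies of $\Ref^{(1)}(\alpha,\beta,-1)$, which is finite by (\ref{condition-A}) and Fact \ref{fact-ref}. In (\ref{condition-B}-\ref{condition-B-b}), the $n-1$ entries $\gamma_2=\cdots=\gamma_n=-1$ allow $n-2$ applications of Lemma \ref{lem:red-1} to reach a power of $\Ref^{(2)}(\alpha,\beta,(\gamma_1,-1))$; since $\beta\alpha^{-1}=-1$ is preserved, one application of Lemma \ref{lem:red-2} then yields a power of $\Ref^{(1)}(\alpha,\beta,\gamma_1)$, again finite by (\ref{condition-A}).

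Cases (\ref{condition-B}-\ref{condition-B-c}) and (\ref{condition-B}-\ref{condition-B-d}) I would stop at $\Ref^{(2)}$ and close using Kato. In (\ref{condition-B}-\ref{condition-B-c}), $n-2$ applications of Lemma \ref{lem:red-1} reach a power of $\Ref^{(2)}(\alpha,\beta,(\gamma_1,-1))$ with the preserved special eigenvalue $\delta_0^{(2)}=-1$, so Fact \ref{fact:F4-fin-irr} (through its $\delta_0^{(2)}=-1$ clause, together with the inherited (\ref{condition-A})) makes $\Mon^{(2)}$ finite irreducible and hence $\Ref^{(2)}$ finite. In (\ref{condition-B}-\ref{condition-B-d}), $n-3$ applications of Lemma \ref{lem:red-1} bring us to $\Ref^{(3)}(\alpha,\beta,(\gamma_1,\gamma_2,-1))$ with $\beta\alpha^{-1}=\delta_0^{(3)}=-1$ still available; one application of Lemma \ref{lem:red-2} then gives a power of $\Ref^{(2)}(\alpha,\beta,(\gamma_1,\gamma_2))$ with $\delta_0^{(2)}=-1$, to which Fact \ref{fact:F4-fin-irr} again applies. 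The main obstacle is purely bookkeeping: in each subcase one must track how many $\gamma_k=-1$'s survive at each stage in order to justify the required number of applications of Lemma \ref{lem:red-1}, and verify that whichever of $\beta\alpha^{-1}=-1$ or $\delta_0=-1$ is to be used for the final step or for Kato's theorem is still present when needed.
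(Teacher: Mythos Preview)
Your proposal is correct and follows essentially the same approach as the paper: iterate Lemmas \ref{lem:red-1} and \ref{lem:red-2} to decompose $\Ref^{(n)}$ into a direct product of copies of $\Ref^{(1)}$ or $\Ref^{(2)}$, and then conclude finiteness from condition (\ref{condition-A}) (via Fact \ref{fact-ref}) or from Fact \ref{fact:F4-fin-irr}. The only cosmetic difference is that in cases (\ref{condition-B}-\ref{condition-B-b}) and (\ref{condition-B}-\ref{condition-B-d}) the paper iterates Lemma \ref{lem:red-2} exclusively (rather than your mixture of several applications of Lemma \ref{lem:red-1} followed by a single application of Lemma \ref{lem:red-2}), reaching the same $\Ref^{(1)}(\alpha,\beta,\gamma_1)$ and $\Ref^{(2)}(\alpha,\beta,(\gamma_1,\gamma_2))$ respectively.
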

\begin{proof}
  By Lemma \ref{lem:if-irred}, we may assume that 
  the irreducibility condition (\ref{irred-2}) holds. 
  Thus, we can apply Lemmas \ref{lem:red-1} and \ref{lem:red-2}. 
  Let us consider the four cases 
  (\ref{condition-B}-\ref{condition-B-a})--(\ref{condition-B}-\ref{condition-B-d}). 
  \begin{itemize}
  \item[(\ref{condition-B}-\ref{condition-B-a})] 
    Using Lemma \ref{lem:red-1} repeatedly, we have 
    \begin{align*}
      \Ref^{(n)}(\al,\be,(-1,-1,-1,\dots ,-1)) 
      &\simeq 
        \left(\Ref^{(n-1)}(\al,\be,(-1,\dots ,-1,-1)) \right)^2 \\
      &\simeq 
        \left(\Ref^{(n-2)}(\al,\be,(-1,\dots ,-1)) \right)^4 \\
      &\simeq \cdots 
        \simeq 
        \left(\Ref^{(1)}(\al,\be,-1) \right)^{2^{n-1}} .
    \end{align*}
    The finiteness follows from the condition (\ref{condition-A}). 
  \item[(\ref{condition-B}-\ref{condition-B-b})] 
    Using Lemma \ref{lem:red-2} repeatedly, we have
    \begin{align*}
      \Ref^{(n)}(\al,\be,(\ga_1, -1,\dots ,-1)) 
      \simeq 
      \left(\Ref^{(1)}(\al,\be,\ga_1) \right)^{2^{n-1}} . 
    \end{align*}
    The finiteness follows from the condition (\ref{condition-A}).     
  \item[(\ref{condition-B}-\ref{condition-B-c})] 
    Using Lemma \ref{lem:red-1} repeatedly, we obtain 
    \begin{align*}
      \Ref^{(n)}(\al,\be,(\ga_1 ,-1,\dots ,-1)) 
      \simeq 
      \left(\Ref^{(2)}(\al,\be,(\ga_1, -1)) \right)^{2^{n-2}} . 
    \end{align*}
    Since 
    \begin{align*}
      &\de_0^{(2)}(\al,\be,(\ga_1, -1))
      =-\ga_1 \cdot (-1) \cdot \al^{-1}\be^{-1} \\
      &=(-1)^{n-1}\ga_1\cdots\ga_n\al^{-1}\be^{-1} 
      =\de_0^{(n)}(\al,\be,\ga)=-1 ,
    \end{align*}
    $\Ref^{(2)}(\al,\be,(\ga_1, -1))$ is finite by 
    Fact \ref{fact:F4-fin-irr}, 
    and hence, $\Ref^{(n)}(\al,\be,(\ga_1 ,-1,\dots ,-1))$ is also finite. 
  \item[(\ref{condition-B}-\ref{condition-B-d})] 
    Using Lemma \ref{lem:red-2} repeatedly, we obtain 
    \begin{align*}
      \Ref^{(n)}(\al,\be,(\ga_1,\ga_2,-1,\dots ,-1)) 
      \simeq 
      \left(\Ref^{(2)}(\al,\be,(\ga_1,\ga_2)) \right)^{2^{n-2}}  .
    \end{align*}
    Because of the identity 
    \begin{align*}
      &\de_0^{(2)}(\al,\be,(\ga_1,\ga_2))=-\ga_1\ga_2 \al^{-1}\be^{-1} \\
      &=(-1)^{n-1}\ga_1\cdots\ga_n\al^{-1}\be^{-1} 
      =\de_0^{(n)}(\al,\be,\ga)=-1
    \end{align*}
    and Fact \ref{fact:F4-fin-irr}, $\Ref^{(2)}(\al,\be,(\ga_1, \ga_2))$ is finite. 
    Thus, 
    we can conclude that 
    $\Ref^{(n)}(\al,\be,(\ga_1,\ga_2,-1,\dots ,-1))$ is also finite. 
  \end{itemize}
\end{proof}

Using Fact \ref{fact-ref}, Lemma \ref{lem:if-irred}, and Proposition \ref{prop:if-fin}, 
we complete the proof of the ``if'' part of Theorem \ref{th:fin-irr}. 


\subsection{Proof of ``only if'' part}
We may assume that the irreducibility condition (\ref{irred-2}) holds. 
First, we consider the condition (\ref{condition-A}).
\begin{Lem}
  Let $\BFH$ be the subgroup of $\Mon^{(n)}(\al,\be,\ga)$ generated by 
  $M_1$, $M_2 ,\dots ,M_{n-1}$ and $M_0 M_n M_0$. 
  Then, there exists a surjective group homomorphism  
  $\BFH \to \Mon^{(n-1)}(\al,\be,(\ga_1,\dots ,\ga_{n-1}))$.  
\end{Lem}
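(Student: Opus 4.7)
The plan is to produce the desired surjection by restricting the action of $\BFH$ to a carefully chosen subspace. Set
\[
V := \mathrm{span}_{\C}\{\ee_{I,0} : I \in \ztwo^{n-1}\} \subset \C^{2^n};
\]
this is the fixed space of $M_n$ (since $G(\ga_n) \ee_0 = \ee_0$), and the ordering $\prec$ is compatible with that of $\C^{2^{n-1}}$ via $\ee_{I,0} \leftrightarrow \ee_I$, because in the $\prec$ order all basis vectors with $i_n = 0$ precede those with $i_n = 1$. For $k = 1,\dots,n-1$, the matrix $M_k$ has $E_2$ in its $n$-th tensor slot, so $V$ is $M_k$-invariant and $M_k|_V = M_k^{(n-1)}(\al,\be,(\ga_1,\dots,\ga_{n-1}))$ by inspection of the tensor factors.

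The heart of the argument is showing that $M_0 M_n M_0$ also preserves $V$ and restricts to $M_0^{(n-1)}(\al,\be,(\ga_1,\dots,\ga_{n-1}))$. Writing $M_0 \ee_J = \ee_J - v^{(n)}_J\, \ee_{1,\dots,1,1}$ (read off from Fact \ref{fact:rep-mat}), and using $M_n \ee_{1,\dots,1,1} = -\ga_n^{-1} \ee_{1,\dots,1,0} + \ga_n^{-1} \ee_{1,\dots,1,1}$, I expand $M_0 M_n M_0 \ee_{I,0}$ for each $I \in \ztwo^{n-1}$. The only vector outside $V$ that can appear is $\ee_{1,\dots,1,1}$, with coefficient
\[
-v^{(n)}_{I,0}\bigl(\, 1 + \ga_n^{-1} v^{(n)}_{1,\dots,1,0} + \ga_n^{-1}\de_0^{(n)}\, \bigr).
\]
The crucial identity is that this parenthesized quantity vanishes identically, which follows at once from the explicit formulas for $v^{(n)}_{1,\dots,1,0}$ in Fact \ref{fact:rep-mat} and for $\de_0^{(n)}$ in Remark \ref{rem:M0-reflection}. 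This is the main technical obstacle; once it is verified, invariance of $V$ under $M_0 M_n M_0$ is established.

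After this cancellation I obtain $M_0 M_n M_0 \ee_{I,0} = \ee_{I,0} + \ga_n^{-1} v^{(n)}_{I,0}\, \ee_{1,\dots,1,0}$. Matching this against $M_0^{(n-1)} \ee_I = \ee_I - v^{(n-1)}_I\, \ee_{1,\dots,1}$ reduces to the case-by-case identity $\ga_n^{-1} v^{(n)}_{I,0} = -v^{(n-1)}_I$ for all $I \in \ztwo^{n-1}$; the extra factor $\ga_n$ appearing in $\prod_{k=1}^n \ga_k^{1-i_k}$ (present because $i_n = 0$) cancels with $\ga_n^{-1}$, and the sign flip $(-1)^{n+|I|} = -(-1)^{(n-1)+|I|}$ produces the minus sign, with the $I = (0,\dots,0)$ case handled separately using the explicit form of $v^{(n)}_{(0,\dots,0)}$.

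The restriction map $\phi \colon \BFH \to \GL(V) \cong \GL_{2^{n-1}}(\C)$ is therefore a well-defined group homomorphism sending $M_k \mapsto M_k^{(n-1)}$ for $k = 1,\dots,n-1$ and $M_0 M_n M_0 \mapsto M_0^{(n-1)}$. Since these images are exactly the generators of $\Mon^{(n-1)}(\al,\be,(\ga_1,\dots,\ga_{n-1}))$, the map $\phi$ is surjective onto it, as required. The main obstacle is the vanishing identity of $1 + \ga_n^{-1} v^{(n)}_{1,\dots,1,0} + \ga_n^{-1}\de_0^{(n)}$, which is where the particular form of the reflection $M_0$ and the tensor-product shape of $M_n$ conspire precisely; the remainder of the proof is bookkeeping, with careful attention to signs.
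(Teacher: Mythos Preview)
Your proof is correct and follows essentially the same approach as the paper: the same subspace $V=W=\bigoplus_I \C\,\ee_{I,0}$, the same key vanishing identity $1+\ga_n^{-1}v^{(n)}_{1,\dots,1,0}+\ga_n^{-1}\de_0^{(n)}=0$, and the same entry-by-entry check that $-\ga_n^{-1}v^{(n)}_{I,0}=v^{(n-1)}_I$. The only cosmetic difference is that the paper phrases the homomorphism via the block upper-triangular form of elements of $\BFH$, whereas you phrase it as restriction to the invariant subspace, which is equivalent.
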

\begin{proof}
  We consider a subspace
  \begin{align*}
    W=\bigoplus_{(i_1 ,\dots ,i_{n-1})} \C \ee_{i_1,\dots ,i_{n-1},0}
  \end{align*}
  of $\C^{2^n}$ of which the dimension is $2^{n-1}$. 
  We prove the following two claims. 
  \begin{enumerate}[(i)]
  \item For $k=1 ,\dots, n-1$, $M_k$ acts on $W$ and 
    its representation matrix coincides with 
    $M_k^{(n-1)}(\al,\be,(\ga_1,\dots ,\ga_{n-1}))$. 
  \item $M_0 M_n M_0$ acts on $W$ and 
    its representation matrix coincides with 
    $M_0^{(n-1)}(\al,\be,(\ga_1,\dots ,\ga_{n-1}))$. 
  \end{enumerate}
  Proving these claims would confirm 
  that each element of $\BFH$ is of the form
  \begin{align*}
    \begin{pmatrix}
      H & H' \\ O & H''
    \end{pmatrix} ,\quad
    \begin{array}{l}
      H \in \Mon^{(n-1)}(\al,\be,(\ga_1,\dots ,\ga_{n-1})) ,\\
      \text{$H'$ and $H''$ are square matrices of size $2^{n-1}$.}
    \end{array}
  \end{align*}
  Thus, we can define the group homomorphism
  \begin{align*}
    \BFH \longrightarrow \Mon^{(n-1)}(\al,\be,(\ga_1,\dots ,\ga_{n-1}));\qquad
    \begin{pmatrix}
      H & H' \\ O & H''
    \end{pmatrix}
    \mapsto H, 
  \end{align*}
  and it is clearly surjective. 
  First, we present the proof of the claim (i). Since the $n$th factor of
  \begin{align}
    \label{eq:Mk-tensor}
    M_k =E_2 \ot \cdots \ot E_2 \ot G(\ga_k) \ot E_2 \ot \cdots \ot \underline{E_2} ,
  \end{align}
  is $E_2$ (underlined), that of 
  $\ee_{i_1,\dots ,i_{n-1},0} 
  =\ee_{i_1} \otimes \cdots \otimes \ee_{i_{n-1}} \otimes \underline{\ee_0}$
  is not changed. This means that $M_k$ acts on $W$. 
  Its representation matrix is obtained by removing 
  the $n$th factor $E_2$ from (\ref{eq:Mk-tensor}). 
  This is nothing but $M_k^{(n-1)}(\al,\be,(\ga_1,\dots ,\ga_{n-1}))$, 
  and the claim (i) is proved. 
  Next, we provide the proof of the claim (ii). 
  We have
  \begin{align*}
    &M_0 M_n M_0 \ee_{i_1,\dots ,i_{n-1},0}
    =M_0 M_n (\ee_{i_1,\dots ,i_{n-1},0} -v^{(n)}_{i_1,\dots ,i_{n-1},0}\ee_{1,\dots,1}) \\
    &=M_0 (\ee_{i_1,\dots ,i_{n-1},0} 
      -v^{(n)}_{i_1,\dots ,i_{n-1},0}(-\ga_n^{-1}\ee_{1,\dots,1,0} +\ga_n^{-1}\ee_{1,\dots,1})) \\
    &=\ee_{i_1,\dots ,i_{n-1},0} -v^{(n)}_{i_1,\dots ,i_{n-1},0}\ee_{1,\dots,1}\\
    &\qquad 
      +v^{(n)}_{i_1,\dots ,i_{n-1},0}\ga_n^{-1}(\ee_{1,\dots ,1,0} -v^{(n)}_{1,\dots ,1,0}\ee_{1,\dots,1})
      -v^{(n)}_{i_1,\dots ,i_{n-1},0}\ga_n^{-1}(\ee_{1,\dots ,1} -v^{(n)}_{1,\dots ,1}\ee_{1,\dots,1}) \\
    &=\ee_{i_1,\dots ,i_{n-1},0} +v^{(n)}_{i_1,\dots ,i_{n-1},0}\ga_n^{-1}\ee_{1,\dots ,1,0}
      -v^{(n)}_{i_1,\dots ,i_{n-1},0}(1+\ga_n^{-1}v^{(n)}_{1,\dots ,1,0}
      +\ga_n^{-1}(1-v^{(n)}_{1,\dots ,1})) \ee_{1,\dots,1} .
  \end{align*}
  Because of the identity 
  \begin{align*}
    &1+\ga_n^{-1}v^{(n)}_{1,\dots ,1,0}+\ga_n^{-1}(1-v^{(n)}_{1,\dots ,1})
      =1+\ga_n^{-1}v^{(n)}_{1,\dots ,1,0}+\ga_n^{-1}\de_0^{(n)}(\al,\be,\ga)  \\
    &=1+\ga_n^{-1}\cdot (-1)\cdot \frac{(\alpha \beta +(-1)^{n-1} \prod_{k=1}^{n-1} \gamma_k)
      \gamma_n}{\alpha \beta}
      +\ga_n^{-1}\cdot (-1)^{n-1} \frac{\prod_{k=1}^n \gamma_k}{\al \be} =0 ,
  \end{align*}
  we obtain 
  \begin{align*}
    M_0 M_n M_0 \ee_{i_1,\dots ,i_{n-1},0}
    =\ee_{i_1,\dots ,i_{n-1},0} +v^{(n)}_{i_1,\dots ,i_{n-1},0}\ga_n^{-1}\ee_{1,\dots ,1,0} 
    \in W,
  \end{align*}
  and hence, $M_0 M_n M_0$ acts on $W$. 
  The representation matrix is equal to
  \begin{align}
    \label{eq:rep-mat-M0MnM0}
    E_{2^{n-1}}-\tp (\mathbf{0}, \dots ,\mathbf{0},\bfv'), 
  \end{align}
  where the $(i_1,\dots ,i_{n-1})$-th entry $v'_{i_1,\dots ,i_{n-1}}$ of $\bfv'$ is 
  $v'_{i_1,\dots ,i_{n-1}} =-v^{(n)}_{i_1,\dots ,i_{n-1},0}\cdot \ga_n^{-1}$. 
  If $(i_1,\dots ,i_{n-1})=(0,\dots ,0)$, then we have
  \begin{align*}
    v'_{0,\dots ,0}
    &=-v^{(n)}_{0,\dots ,0} \cdot \ga_n^{-1}
    =-(-1)^n \frac{(\alpha-1)(\beta-1)\prod_{k=1}^n \gamma_k}{\alpha \beta} \cdot \ga_n^{-1} \\
    &=(-1)^{n-1} \frac{(\alpha-1)(\beta-1)\prod_{k=1}^{n-1} \gamma_k}{\alpha \beta} 
      =v^{(n-1)}_{0,\dots ,0}(\al,\be,(\ga_1,\dots ,\ga_{n-1})).
  \end{align*}
  Otherwise, we have
  \begin{align*}
    &v'_{i_1,\dots ,i_{n-1}}
      =-v^{(n)}_{i_1,\dots ,i_{n-1},0}\cdot \ga_n^{-1} \\
    &=-(-1)^{n+|I|} \frac{(\alpha \beta +(-1)^{|I|} \prod_{k=1}^{n-1} \gamma_k^{i_k})
      \prod_{k=1}^{n-1} \gamma_k^{1-i_k} \ga_n}{\alpha \beta} \cdot \ga_n^{-1} \\
    &=(-1)^{n-1+|I|} \frac{(\alpha \beta +(-1)^{|I|} \prod_{k=1}^{n-1} \gamma_k^{i_k})
      \prod_{k=1}^{n-1} \gamma_k^{1-i_k}}{\alpha \beta} \\
    &=v^{(n-1)}_{i_1,\dots ,i_{n-1}}(\al,\be,(\ga_1,\dots ,\ga_{n-1})).
  \end{align*}
  Therefore, the representation matrix (\ref{eq:rep-mat-M0MnM0}) coincides with 
  $M_0^{(n-1)}(\al,\be,(\ga_1,\dots ,\ga_{n-1}))$, and 
  the proof is completed. 
\end{proof}

Using this lemma, we obtain the following corollary. 
\begin{Cor}
  \label{cor:monodromy-subgroup}
  For $1\leq j_1 <j_2 <\cdots <j_k \leq n$ ($k=1,\dots ,n$), 
  $\Mon^{(k)}(\al,\be,(\ga_{j_1},\dots ,\ga_{j_k}))$ is isomorphic to
  a subquotient group of $\Mon^{(n)}(\al,\be,\ga)$. 
\end{Cor}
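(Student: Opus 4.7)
The plan is downward induction on $k$. The base case $k=n$ is immediate, since $\Mon^{(n)}(\al,\be,\ga)$ is a subquotient of itself. Granting the inductive hypothesis for $k+1$, it suffices to establish a single-index reduction: for every $j\in\{1,\dots,n\}$, that $\Mon^{(n-1)}(\al,\be,(\ga_1,\dots,\widehat{\ga_j},\dots,\ga_n))$ is isomorphic to a subquotient of $\Mon^{(n)}(\al,\be,\ga)$. Once this is in hand, I would choose some $j' \in \{1,\dots,n\}\setminus\{j_1,\dots,j_k\}$ (which exists because $k\leq n-1$), apply the reduction to drop $\ga_{j'}$, and then invoke the inductive hypothesis on the tuple $(\ga_1,\dots,\widehat{\ga_{j'}},\dots,\ga_n)$. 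The two steps are combined via the standard observation that a subquotient of a subquotient is a subquotient: if $\Mon^{(n-1)} \cong H/N$ with $N\triangleleft H \leq \Mon^{(n)}$, and $\Mon^{(k)} \cong H_1/N_1$ with $N_1 \triangleleft H_1 \leq \Mon^{(n-1)}$, then pulling back $H_1$ and $N_1$ along the quotient $H\to H/N$ exhibits $\Mon^{(k)}$ as a subquotient of $\Mon^{(n)}$.

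For $j=n$, the single-index reduction is exactly the preceding lemma, because the surjection $\BFH \to \Mon^{(n-1)}(\al,\be,(\ga_1,\dots,\ga_{n-1}))$ realizes the target as a quotient of a subgroup of $\Mon^{(n)}(\al,\be,\ga)$. For a general $j$, I would appeal to the symmetry of the setup. Permuting the coordinates $x_1,\dots,x_n$ yields an automorphism of $X$ that fixes the hypersurface $(R(x)=0)$ (since $R$ is symmetric in the $x_k$) and permutes the loops $\rho_1,\dots,\rho_n$ while preserving $\rho_0$; composing with the corresponding reordering of the basis $\{\tF_I\}$ prescribed by $\prec$ induces an automorphism of $\Mon^{(n)}(\al,\be,\ga)$ that sends $M_j$ to the analogue of $M_n$ with $\ga_j$ in the role of $\ga_n$. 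Transporting the preceding lemma along this automorphism provides the surjection $\BFH_j \to \Mon^{(n-1)}(\al,\be,(\ga_1,\dots,\widehat{\ga_j},\dots,\ga_n))$, where $\BFH_j := \langle M_1,\dots,\widehat{M_j},\dots,M_n,\, M_0 M_j M_0\rangle$.

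The main obstacle is making the symmetry step precise: one must track how the permutation of the $x_k$ interacts with the ordering $\prec$ used to express the matrices, so that the induced automorphism genuinely interchanges $M_j$ and $M_n$ (and their conjugates by $M_0$) as needed. If this feels unsatisfying, the clean alternative is to repeat the proof of the preceding lemma verbatim, with $W$ replaced by the subspace $W_j := \bigoplus_{i_j=0}\C\,\ee_{i_1,\dots,i_n}$ and $M_0 M_n M_0$ replaced by $M_0 M_j M_0$. The key cancellation $1+\ga_n^{-1} v^{(n)}_{1,\dots,1,0}+\ga_n^{-1}\de_0^{(n)}(\al,\be,\ga)=0$ appearing in the preceding proof is symmetric under which index is fixed to $0$, so the same calculation goes through with $\ga_n$ replaced by $\ga_j$ and the result identifies the action of $M_0 M_j M_0$ on $W_j$ with $M_0^{(n-1)}(\al,\be,(\ga_1,\dots,\widehat{\ga_j},\dots,\ga_n))$. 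With the single-index reduction secured either way, the induction closes at once.
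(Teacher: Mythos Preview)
Your proposal is correct and matches the paper's intended argument: the paper states the corollary without proof, simply as a consequence of iterating the preceding lemma together with the evident symmetry in the indices, exactly as you outline. One small caveat: permuting the coordinates gives an isomorphism $\Mon^{(n)}(\al,\be,\ga)\cong\Mon^{(n)}(\al,\be,\sigma(\ga))$ rather than an automorphism of a fixed group, but this is harmless for the subquotient conclusion, and in any case your direct alternative with $W_j$ and $M_0 M_j M_0$ is the cleanest route.
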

Let us show that the condition (\ref{condition-A}) holds. 
\begin{Prop}
  \label{prop:k-variables}
  Suppose that $\Mon^{(n)}(\al,\be,\ga)$ is finite irreducible.  
  Then, for $1\leq j_1 <j_2 <\cdots <j_k \leq n$ ($k=1,\dots ,n$), 
  $\Mon^{(k)}(\al,\be,(\ga_{j_1},\dots ,\ga_{j_k}))$ is also finite irreducible.  
  Especially, the condition (\ref{condition-A}) holds. 
\end{Prop}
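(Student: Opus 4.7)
The plan is to invoke Corollary \ref{cor:monodromy-subgroup} as the main input, which already gives that $\Mon^{(k)}(\al,\be,(\ga_{j_1},\dots ,\ga_{j_k}))$ is isomorphic to a subquotient of $\Mon^{(n)}(\al,\be,\ga)$. From this, the finiteness half is automatic: a subgroup of a finite group is finite, and a quotient of a finite group is finite, so subquotients of $\Mon^{(n)}(\al,\be,\ga)$ are finite as soon as $\Mon^{(n)}(\al,\be,\ga)$ is.

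For irreducibility I would rely on the parameter-theoretic characterization recalled in Subsection \ref{subsection:DE}, tracing back to \cite{HT}: the monodromy group $\Mon^{(n)}(\al,\be,\ga)$ is irreducible if and only if the tuple $(\al,\be,\ga_1,\dots,\ga_n)$ satisfies condition (\ref{irred-2}). The key combinatorial observation is then that the $k$-variable condition (\ref{irred-2}) for the sub-tuple $(\al,\be,(\ga_{j_1},\dots ,\ga_{j_k}))$, indexed by $I'=(i'_1,\dots,i'_k)\in\ztwo^k$, coincides with exactly those inequalities in the $n$-variable condition (\ref{irred-2}) whose multi-indices $I=(i_1,\dots,i_n)\in\ztwon$ are supported on $\{j_1,\dots,j_k\}$, that is, satisfy $i_m=0$ for $m\notin\{j_1,\dots,j_k\}$ and $i_{j_l}=i'_l$. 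Hence the full condition (\ref{irred-2}) implies the restricted one, and $\Mon^{(k)}(\al,\be,(\ga_{j_1},\dots ,\ga_{j_k}))$ is irreducible as well.

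The final clause of the proposition follows by specializing $k=1$ and letting $j$ range over $\{1,\dots,n\}$, giving precisely condition (\ref{condition-A}). I do not anticipate any real obstacle in this argument, since all substantive work was absorbed into Corollary \ref{cor:monodromy-subgroup}; the remaining steps are a short deduction from finiteness of subquotients together with a combinatorial restriction of the parameter inequalities (\ref{irred-2}) from $n$ to $k$ variables.
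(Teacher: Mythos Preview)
Your proposal is correct and follows essentially the same approach as the paper: finiteness via Corollary~\ref{cor:monodromy-subgroup} (subquotients of finite groups are finite) and irreducibility via the parameter condition~(\ref{irred-2}), observing that the $k$-variable inequalities are a subset of the $n$-variable ones. The paper's proof is simply a terser version of what you wrote.
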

\begin{proof}
  The irreducibility of 
  $\Mon^{(k)}(\al,\be,(\ga_{j_1},\dots ,\ga_{j_k}))$ immediately follows from that of $\Mon^{(n)}(\al,\be,\ga)$ (recall that 
  the irreducibility condition is given by (\ref{irred-2})). 
  The finiteness follows from Corollary \ref{cor:monodromy-subgroup}.
\end{proof}

Next, we consider the condition (\ref{condition-B}).
\begin{Lem}
  \label{lem:F4-fin-1}
  Let $n\geq 3$ and $\Mon^{(n)}(\al,\be,\ga)$ be finite irreducible. 
  For distinct $i,j,k \in \{1,\dots ,n\}$, 
  $\Mon^{(2)}(\al,\al\ga_k^{-1},(\ga_i,\ga_j))$ and $\Mon^{(2)}(\be,\be\ga_k^{-1},(\ga_i,\ga_j))$ 
  are also finite irreducible. 
\end{Lem}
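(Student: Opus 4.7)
The plan is to exhibit $\Mon^{(2)}(\al,\al\ga_k^{-1},(\ga_i,\ga_j))$ (and by symmetry $\Mon^{(2)}(\be,\be\ga_k^{-1},(\ga_i,\ga_j))$) as a subquotient of $\Mon^{(n)}(\al,\be,\ga)$, in the same spirit as the lemma preceding Corollary \ref{cor:monodromy-subgroup}. Once this is done, finiteness is automatic (subquotients of finite groups are finite), and irreducibility will be immediate from the criterion (\ref{irred-2}) applied to the shifted parameters.

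By Corollary \ref{cor:monodromy-subgroup}, I may first reduce to the case $n=3$ with $\{i,j,k\}=\{1,2,3\}$: it suffices to show that $\Mon^{(2)}(\al,\al\ga_3^{-1},(\ga_1,\ga_2))$ is a subquotient of $\Mon^{(3)}(\al,\be,(\ga_1,\ga_2,\ga_3))$. Mirroring the previous construction, I would choose a $4$-dimensional invariant subspace $V \subset \C^{8}$ and a subgroup $\BFH' = \langle M_1,M_2,w\rangle$, where $w$ is a carefully chosen word in $M_0$ and $M_3$, such that the restriction of $M_1$, $M_2$, $w$ to $V$ gives exactly $M_1^{(2)}$, $M_2^{(2)}$, $M_0^{(2)}$ evaluated at $(\al,\al\ga_3^{-1},(\ga_1,\ga_2))$. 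The verification amounts to expanding both sides using the tensor-product form of Fact \ref{fact:rep-mat} and matching the entries of $\bfv^{(2)}(\al,\al\ga_3^{-1},(\ga_1,\ga_2))$ with those produced by the restricted action of $w$.

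For the irreducibility of the subquotient, the required inequalities are $\al - \prod_{l\in S}\ga_l \ne 0$ and $\al\ga_k^{-1} - \prod_{l\in S}\ga_l \ne 0$ for $S \subseteq \{i,j\}$; taking $I\in\ztwon$ with support in $S$ (respectively $S\cup\{k\}$) shows each of these is a specific instance of the $\al$-half of (\ref{irred-2}) for $\Mon^{(n)}(\al,\be,\ga)$, and hence holds. The case $\Mon^{(2)}(\be,\be\ga_k^{-1},(\ga_i,\ga_j))$ follows by the $\al \leftrightarrow \be$ symmetry of the $F_C$ series and of the formulas in Fact \ref{fact:rep-mat}.

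The main obstacle is pinning down the correct word $w$ and subspace $V$ producing exactly the parameter shift $\be \mapsto \al\ga_k^{-1}$. Every reflection within $\Mon^{(3)}$ shares the special eigenvalue $\de_0^{(3)}=\ga_1\ga_2\ga_3/(\al\be)$, whereas the target reflection has special eigenvalue $\de_0^{(2)}(\al,\al\ga_3^{-1},(\ga_1,\ga_2))=-\ga_1\ga_2\ga_3/\al^2$, so the shift cannot come from a mere subgroup choice and must arise from a genuine quotient step. Identifying this quotient and bookkeeping how the ambient ``$\be$'' is converted into ``$\al\ga_k^{-1}$'' is the delicate part of the argument; once the correct $w$ is found, the remaining calculations parallel those of the preceding lemma.
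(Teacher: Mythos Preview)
Your irreducibility check is correct, but the finiteness argument has a real gap: you never actually produce the word $w$ and subspace $V$, and your own eigenvalue observation explains why no obvious candidate inside $\Mon^{(3)}$ can work. Saying that ``once the correct $w$ is found, the remaining calculations parallel those of the preceding lemma'' is not a proof---finding $w$ \emph{is} the missing idea.

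The paper avoids this matrix search entirely by invoking a symmetry of the differential system rather than of the matrix group. The Lauricella analog of Kummer's $z\mapsto 1/z$ transformation states that $x_n^{-a}\, f(x_1/x_n,\dots,x_{n-1}/x_n,1/x_n)$ solves $E_C(a,b,c)$ if and only if $f$ solves $E_C\bigl(a,\,a-c_n+1,\,(c_1,\dots,c_{n-1},a-b+1)\bigr)$. Since the solution spaces of the two systems are related by this algebraic change of variables, one monodromy group is finite if and only if the other is; hence $\Mon^{(n)}(\al,\al\ga_n^{-1},(\ga_1,\dots,\ga_{n-1},\al\be^{-1}))$ is finite (and one checks from (\ref{irred-2}) that it is also irreducible). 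Now Proposition~\ref{prop:k-variables}, applied to \emph{this} new parameter set with the subset $\{i,j\}\subset\{1,\dots,n-1\}$, gives the finite irreducibility of $\Mon^{(2)}(\al,\al\ga_n^{-1},(\ga_i,\ga_j))$; relabelling handles general $k$, and the $a\leftrightarrow b$ symmetry gives the $\be$-version. So the shift $\be\mapsto\al\ga_k^{-1}$ you were trying to engineer by matrix manipulation comes for free from a geometric symmetry of $E_C$; without that external input the construction would be very hard to guess from the matrices alone.
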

\begin{proof}
  For simplicity, we prove the claim only for $\Mon^{(2)}(\al,\al\ga_n^{-1},(\ga_{n-2},\ga_{n-1}))$.  
  As was mentioned in \cite[Remark 5.10]{G-FC-monodromy}, 
  $x_n^{-a} f(\frac{x_1}{x_n},\ldots ,\frac{x_{n-1}}{x_n},\frac{1}{x_n})$ 
  is a solution to $E_C (a,b,c)$ 
  if and only if $f(\xi_1,\ldots ,\xi_n)$ is a solution to 
  $E_C (a,a-c_n+1,(c_1,\ldots ,c_{n-1},a-b+1))$ 
  with variables $\xi_1 ,\ldots ,\xi_n$. 
  Then, the finiteness of $\Mon^{(n)}(\al,\be,\ga)$ 
  implies that of $\Mon^{(n)}(\al,\al\ga_n^{-1},(\ga_1,\ldots ,\ga_{n-1},\al\be^{-1}))$.
  Using Proposition \ref{prop:k-variables} with 
  $(j_1 ,j_2)=(n-2,n-1)$, 
  we conclude that 
  $\Mon^{(2)}(\al,\al\ga_n^{-1},(\ga_{n-2},\ga_{n-1}))$ is finite irreducible. 
\end{proof}

\begin{Lem}
  \label{lem:gamma(-1)}
  For $n \geq 3$, if $\Mon^{(n)}(\al,\be,\ga)$ is finite irreducible, 
  then at least $n-2$ of $\ga_1,\dots,\ga_n$ are $-1$.  
\end{Lem}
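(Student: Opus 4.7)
The plan is a proof by contradiction combined with a reduction to $n=3$. Suppose that at least three of $\ga_1,\dots,\ga_n$ are not $-1$; after renumbering, $\ga_1,\ga_2,\ga_3 \neq -1$. By Proposition \ref{prop:k-variables}, $\Mon^{(3)}(\al,\be,(\ga_1,\ga_2,\ga_3))$ is also finite irreducible, so it suffices to derive a contradiction from the assumption that $\Mon^{(3)}(\al,\be,\ga)$ is finite irreducible with all three of $\ga_1,\ga_2,\ga_3$ different from $-1$.

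First I would apply Lemma \ref{lem:F4-fin-1} with $(i,j,k)=(2,3,1)$ to conclude that $\Mon^{(2)}(\al,\al\ga_1^{-1},(\ga_2,\ga_3))$ is finite irreducible. Since $\ga_2$, $\ga_3$, and $\ga_1^{-1}$ are all different from $-1$, condition (\ref{condition-B}') of Fact \ref{fact:F4-fin-irr} forces $\de_0^{(2)}(\al,\al\ga_1^{-1},(\ga_2,\ga_3)) = -\ga_1\ga_2\ga_3/\al^2 = -1$, giving $\ga_1\ga_2\ga_3 = \al^2$. The analogous application to $\Mon^{(2)}(\be,\be\ga_1^{-1},(\ga_2,\ga_3))$ yields $\ga_1\ga_2\ga_3 = \be^2$, and hence $\al^2 = \be^2$.

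Next I would rule out $\al = \be$. By Proposition \ref{prop:k-variables}, $\Mon^{(1)}(\al,\be,\ga_1)$ must be finite irreducible; but if $\al = \be$ (equivalently, $a \equiv b$ modulo $\Z$), the exponent difference at infinity of Gauss's equation is an integer, producing a logarithmic solution and hence a unipotent factor in the local monodromy at $\infty$---a situation absent from Schwarz's list. Therefore $\be = -\al$. Now $\Mon^{(2)}(\al,-\al,(\ga_1,\ga_2))$ is finite irreducible by Proposition \ref{prop:k-variables}; since $\be\al^{-1} = -1$, Fact \ref{fact:F4-fin-irr} forces $\de_0^{(2)}(\al,-\al,(\ga_1,\ga_2)) = \ga_1\ga_2/\al^2 = -1$, because the alternative (at least one of $\ga_1,\ga_2$ equal to $-1$) is excluded by hypothesis. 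Thus $\ga_1\ga_2 = -\al^2$, which combined with $\ga_1\ga_2\ga_3 = \al^2$ forces $\ga_3 = -1$, contradicting the assumption.

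The hardest step is the exclusion of $\al = \be$; it relies on Schwarz's classification of finite irreducible monodromy for Gauss's equation, or equivalently on the elementary observation that $a \equiv b$ modulo $\Z$ produces a logarithmic solution at $\infty$ and therefore infinite monodromy.
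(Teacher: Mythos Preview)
Your argument is correct. Both your proof and the paper's use the same core ingredients---Lemma~\ref{lem:F4-fin-1}, Proposition~\ref{prop:k-variables}, and Kato's criterion (Fact~\ref{fact:F4-fin-irr})---and both arrive at the relation $\ga_1\ga_2\ga_3=\al^2=\be^2$ (in your indexing). The divergence is only in the endgame. You conclude $\al^2=\be^2$, eliminate $\al=\be$ by noting that under irreducibility the local monodromy at $\infty$ would then be a nontrivial Jordan block (equivalently, the case is absent from the Schwarz list), obtain $\be=-\al$, and then one more use of Fact~\ref{fact:F4-fin-irr} for $\Mon^{(2)}(\al,-\al,(\ga_1,\ga_2))$ forces $\ga_1\ga_2=-\al^2$, whence $\ga_3=-1$ directly. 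The paper instead multiplies the two relations to get $\bigl(\de_0^{(2)}(\al,\be,(\ga_1,\ga_2))\bigr)^2\ga_k^2=1$, applies Fact~\ref{fact:F4-fin-irr} to $\Mon^{(2)}(\al,\be,(\ga_1,\ga_2))$ to obtain $\de_0^{(2)}=-1$, deduces $\ga_k=\pm 1$, and rules out $\ga_k=1$ by the elementary observation that $G(1)=\begin{pmatrix}1&-1\\0&1\end{pmatrix}$ is unipotent of infinite order.

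The trade-off: the paper's final elimination ($\ga_k=1$ impossible) is immediate from the explicit form of $M_k$, whereas your elimination of $\al=\be$ appeals to the theory of Gauss's equation. On the other hand, your route yields $\ga_3=-1$ on the nose rather than $\ga_3^2=1$, so no second case needs to be excluded. Your reduction to $n=3$ via Proposition~\ref{prop:k-variables} is a clean simplification that the paper does not make, though it works directly for general $n$ without difficulty.
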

\begin{proof}
  When the number of $k$ such that $\ga_k \neq -1$ is at most one, 
  the claim holds. We assume that $\ga_1 \neq -1$, $\ga_2 \neq -1$, and 
  show that $\ga_k=-1$ ($k=3,\dots,n$). 
  By Lemma \ref{lem:F4-fin-1}, 
  $\Mon^{(2)}(\al,\al\ga_k^{-1},(\ga_1,\ga_2))$ and $\Mon^{(2)}(\be,\be\ga_k^{-1},(\ga_1,\ga_2))$
  are finite irreducible. 
  By Fact \ref{fact:F4-fin-irr} (\ref{condition-B}') for 
  $\Mon^{(2)}(\al,\al\ga_k^{-1},(\ga_1,\ga_2))$, we have two possibilities: 
  \begin{enumerate}[(i)]
  \item $\de_0^{(2)}(\al,\al\ga_k^{-1},(\ga_1,\ga_2))=-1$; 
  \item at least two of 
    $\ga_1,\ga_2,\al\ga_k^{-1}\cdot \al^{-1}=\ga_k^{-1}$ are $-1$. 
  \end{enumerate}
  By the assumption, (ii) does not occur and we obtain 
  \begin{align*}
    1=-\de_0^{(2)}(\al,\al\ga_k^{-1},(\ga_1,\ga_2))=-(-\ga_1\ga_2\al^{-1}(\al\ga_k^{-1})^{-1})
    =\ga_1\ga_2\ga_k \al^{-2} .
  \end{align*}
  Similarly, we obtain $\ga_1\ga_2\ga_k \be^{-2} =1$ 
  from the finiteness of $\Mon^{(2)}(\be,\be\ga_k^{-1},(\ga_1,\ga_2))$. 
  Thus, we have 
  \begin{align*}
    1=(\ga_1\ga_2\ga_k \al^{-2})(\ga_1\ga_2\ga_k \be^{-2})
    =\left( \de_0^{(2)}(\al,\be,(\ga_1,\ga_2)) \right)^2 \ga_k^2 .
  \end{align*}
  On the other hand, Proposition \ref{prop:k-variables} implies that 
  $\Mon^{(2)}(\al,\be,(\ga_1,\ga_2))$ is also finite irreducible. 
  By Fact \ref{fact:F4-fin-irr} (\ref{condition-B}') and the assumption, 
  we have $\de_0^{(2)}(\al,\be,(\ga_1,\ga_2))=-1$. 
  Therefore, we obtain $\ga_k^2=1$, that is, $\ga_k =1$ or $\ga_k=-1$. 
  Because the matrix $G(1)=\begin{pmatrix}1 & -1 \\ 0 & 1 \end{pmatrix}$ has 
  infinite order, the matrix $M_k$ also has infinite order if $\ga_k=1$. 
  This is a contradiction. 
  Therefore, we conclude that $\ga_k=-1$. 
\end{proof}

By the following proposition, we complete the proof of Theorem \ref{th:fin-irr}. 
\begin{Prop}
  For $n \geq 3$, if $\Mon^{(n)}(\al,\be,\ga)$ is finite irreducible, 
  then the condition (\ref{condition-B}) holds. 
\end{Prop}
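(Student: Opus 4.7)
The plan is to split into cases according to the exact number of $\ga_i$'s that differ from $-1$. By Lemma~\ref{lem:gamma(-1)} at most two of them differ, so after permuting indices one of the following holds: (a) $\ga_1=\cdots=\ga_n=-1$; (b) $\ga_1\neq -1$ and $\ga_2=\cdots=\ga_n=-1$; (c) $\ga_1,\ga_2\neq -1$ and $\ga_3=\cdots=\ga_n=-1$. Case (a) already gives $n$ entries equal to $-1$ among $\ga_1,\dots,\ga_n,\be\al^{-1},\de_0^{(n)}$, so (\ref{condition-B}) holds. In cases (b) and (c) the goal is to force the remaining needed entries (among $\be\al^{-1}$ and $\de_0^{(n)}$) to equal $-1$ as well, by applying Proposition~\ref{prop:k-variables} together with Fact~\ref{fact:F4-fin-irr}, and, in case (c), Lemma~\ref{lem:F4-fin-1}.

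In case (b), Proposition~\ref{prop:k-variables} gives that $\Mon^{(2)}(\al,\be,(\ga_1,-1))$ is finite irreducible. Applying Fact~\ref{fact:F4-fin-irr}~(\ref{condition-B}') to this group, and noting that $\ga_1\neq -1$ prevents two of $\ga_1,-1,\be\al^{-1}$ from all being $-1$ unless $\be\al^{-1}=-1$, we conclude that either $\be\al^{-1}=-1$ or $\de_0^{(2)}(\al,\be,(\ga_1,-1))=-1$. A direct check using $\ga_2\cdots\ga_n=(-1)^{n-1}$ shows
\[
\de_0^{(n)}(\al,\be,\ga)=(-1)^{n+1}\ga_1(-1)^{n-1}/(\al\be)=\ga_1/(\al\be)=\de_0^{(2)}(\al,\be,(\ga_1,-1)),
\]
so in either alternative one extra entry among $\be\al^{-1},\de_0^{(n)}$ is $-1$; combined with the $n-1$ gammas equal to $-1$, condition (\ref{condition-B}) holds.

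Case (c) is the main obstacle. Proposition~\ref{prop:k-variables} again gives that $\Mon^{(2)}(\al,\be,(\ga_1,\ga_2))$ is finite irreducible; since neither $\ga_1$ nor $\ga_2$ is $-1$, Fact~\ref{fact:F4-fin-irr}~(\ref{condition-B}') forces $\de_0^{(2)}(\al,\be,(\ga_1,\ga_2))=-1$, giving $\ga_1\ga_2=\al\be$ and, via the same sign computation as in case (b), $\de_0^{(n)}(\al,\be,\ga)=-1$. To obtain $\be\al^{-1}=-1$ I would invoke Lemma~\ref{lem:F4-fin-1} with $(i,j,k)=(1,2,3)$: since $\ga_3=-1$, both $\Mon^{(2)}(\al,-\al,(\ga_1,\ga_2))$ and $\Mon^{(2)}(\be,-\be,(\ga_1,\ga_2))$ are finite irreducible. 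Re-applying~(\ref{condition-B}') to each (with $\ga_1,\ga_2\neq -1$, so the ``two of three are $-1$'' clause is unavailable) yields $\ga_1\ga_2=-\al^2$ and $\ga_1\ga_2=-\be^2$. Combined with $\ga_1\ga_2=\al\be$ these force $\be=-\al$, i.e., $\be\al^{-1}=-1$ (the alternative $\al=\be$ leads to $\al^2=-\al^2$, hence $\al=0$, contradicting $\al=\exp(2\pi\sqrt{-1}\,a)$). Thus the $n-2$ gammas together with $\be\al^{-1}$ and $\de_0^{(n)}$ supply $n$ entries equal to $-1$.

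The hard step is case (c): a single application of the Appell $F_4$ result to $\Mon^{(2)}(\al,\be,(\ga_1,\ga_2))$ only delivers $\de_0^{(n)}=-1$; to pin down $\be\al^{-1}$ one must twist parameters via Lemma~\ref{lem:F4-fin-1} and reconcile the three relations $\ga_1\ga_2=\al\be=-\al^2=-\be^2$. The remaining work is routine sign bookkeeping.
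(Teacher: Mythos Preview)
Your argument is correct. The case split and cases (a), (b) match the paper's reasoning closely. Your handling of case~(c), however, is genuinely different from the paper's.

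In case~(c) the paper never invokes Lemma~\ref{lem:F4-fin-1}. Instead, after obtaining $\de_0^{(n)}=-1$ from $\Mon^{(2)}(\al,\be,(\ga_1,\ga_2))$ exactly as you do, the paper assumes for contradiction that $\be\al^{-1}\neq -1$ and applies Fact~\ref{fact:F4-fin-irr}~(\ref{condition-B}') to $\Mon^{(2)}(\al,\be,(\ga_1,-1))$ (available via Proposition~\ref{prop:k-variables}, pairing $\ga_1$ with one of the $-1$'s among $\ga_3,\dots,\ga_n$). Since $\ga_1\neq-1$ and $\be\al^{-1}\neq-1$, the ``two of three'' clause fails, so $\ga_1\al^{-1}\be^{-1}=-1$; combined with $\ga_1\ga_2=\al\be$ this forces $\ga_2=-1$, contradicting the hypothesis of case~(c). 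Hence $\be\al^{-1}=-1$.

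Your route instead twists the parameters via Lemma~\ref{lem:F4-fin-1} with $\ga_3=-1$, producing the two auxiliary Appell groups $\Mon^{(2)}(\al,-\al,(\ga_1,\ga_2))$ and $\Mon^{(2)}(\be,-\be,(\ga_1,\ga_2))$, and extracts the relations $\ga_1\ga_2=-\al^2=-\be^2$ to conclude $\be=-\al$. This is perfectly valid and self-contained, but it is a heavier tool: Lemma~\ref{lem:F4-fin-1} relies on the nontrivial symmetry of the $E_C$ system under $x\mapsto(x_1/x_n,\dots,1/x_n)$, whereas the paper's argument uses only the restriction to two-variable subgroups. The paper's version is shorter and more elementary; yours has the minor advantage of being uniform with the method used inside the proof of Lemma~\ref{lem:gamma(-1)} itself.
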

\begin{proof}
  By Lemma \ref{lem:gamma(-1)}, we may assume that
  \begin{align}
    \label{eq:gamma3-n=-1}
    \ga_3=\cdots=\ga_n=-1 
  \end{align}
  without loss of generality. 
  Proposition \ref{prop:k-variables} implies that 
  $\Mon^{(2)}(\al,\be,(\ga_1,\ga_2))$ and 
  $\Mon^{(2)}(\al,\be,(\ga_i,-1))$ ($i=1,2$) are finite irreducible. 
  By Fact \ref{fact:F4-fin-irr} (\ref{condition-B}') for 
  $\Mon^{(2)}(\al,\be,(\ga_1,\ga_2))$, we have two possibilities:
  \begin{enumerate}[(i)]
  \item $\de_0^{(2)}(\al ,\be,(\ga_1 ,\ga_2))=-1$, that is, 
    $-\ga_1 \ga_2 \al^{-1}\be^{-1}=-1$; 
  \item at least two of $\ga_1 ,\ga_2$, $\be\al^{-1}$ are $-1$. 
  \end{enumerate}
  In the case (ii), the condition (\ref{eq:gamma3-n=-1}) implies 
  (\ref{condition-B}-\ref{condition-B-a}) or (\ref{condition-B}-\ref{condition-B-b}), 
  and the proposition holds. 
  We consider the case when (ii) does not hold. 
  We may assume $\ga_1 \neq -1$. 
  Since (i) holds, we have
  \begin{align*}
    \de_0^{(n)}(\al,\be,\ga)
    =(-1)^{n-1}\ga_1\ga_2\ga_3\cdots\ga_n\al^{-1}\be^{-1} 
    =-\ga_1 \ga_2 \al^{-1}\be^{-1}
    =-1 .
  \end{align*}
  Therefore, if $\be\al^{-1}= -1$, then 
  the condition (\ref{condition-B}-\ref{condition-B-d}) holds. 
  We assume $\be\al^{-1}\neq -1$ and show $\ga_2 =-1$, which implies 
  (\ref{condition-B}-\ref{condition-B-c}). 
  By Fact \ref{fact:F4-fin-irr} (\ref{condition-B}') for 
  $\Mon^{(2)}(\al,\be,(\ga_1,-1))$, we have two possibilities:
  \begin{enumerate}[(i)]
    \setcounter{enumi}{2}
  \item $\de_0^{(2)}(\al ,\be,(\ga_1 ,-1))=-1$, that is, 
    $\ga_1 \al^{-1}\be^{-1}=-1$; 
  \item at least two of $\ga_1 ,-1$, $\be\al^{-1}$ are $-1$. 
  \end{enumerate}
  By the assumption, (iv) does not hold and we obtain $\ga_1 \al^{-1}\be^{-1}=-1$. 
  This and (i) imply
  \begin{align*}
    \ga_2 =(\ga_1 \al^{-1}\be^{-1})^{-1} =-1 .
  \end{align*}
  Therefore, we complete the proof. 
\end{proof}

\section{Structure of the finite irreducible monodromy group}\label{section-str}
In this section, we consider the structure of $\Mon^{(n)}$ when
it is finite irreducible.
Note that $\al,\be ,\ga_1 ,\dots ,\ga_n$ are roots of unity 
(equivalently, $a,b,c_1,\dots ,c_n \in \Q$) by the condition (A). 
For $q\in \{ 3,4,5,\dots\}$, we set $\zeta_q =\exp (2\pi \sqrt{-1} /q)$. 

By the definition of the reflection subgroup, we have
\begin{align*}
  \Mon^{(n)}(\al ,\be ,\ga) = \Ref^{(n)} (\al ,\be ,\ga) \cdot \langle M_1 ,\dots ,M_n \rangle .
\end{align*}
To determine the structure of $\Mon^{(n)}(\al ,\be ,\ga)$, we need to examine
the intersection $\Ref^{(n)} (\al ,\be ,\ga) \cap \langle M_1 ,\dots ,M_n \rangle$.
By the proof of Proposition \ref{prop:if-fin},
the reflection subgroup $\Ref^{(n)}$ can be decomposed into 
the product of some
$\Ref^{(1)}$'s or $\Ref^{(2)}$'s. 
First, we consider the intersections
$\Ref^{(1)} \cap \langle M_1^{(1)} \rangle$ and 
$\Ref^{(2)} \cap \langle M_1^{(2)} , M_2^{(2)} \rangle$.
To examine these intersections, the following lemma is useful. 
\begin{Lem}\label{lem:str-order}
  Suppose that $Q \in \Mon^{(n)}$ satisfies $Q^q =E_{2^n}$.
  If $(Q M_0^j)^{qr+1}=E_{2^n}$ with $r \in \Z$, then we have $Q \in \Ref^{(n)}$.
\end{Lem}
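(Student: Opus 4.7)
The plan is to work in the quotient group $\Mon^{(n)}/\Ref^{(n)}$. Since $\Ref^{(n)}$ is by definition the smallest normal subgroup of $\Mon^{(n)}$ containing $M_0$, the quotient is well-defined, and the image of $M_0$ (and hence of every power $M_0^j$) is trivial in $\Mon^{(n)}/\Ref^{(n)}$. Showing $Q \in \Ref^{(n)}$ is therefore equivalent to showing that the image $\overline{Q}$ of $Q$ in the quotient is the identity.

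From the two hypotheses I would extract two relations satisfied by $\overline{Q}$. First, $Q^q = E_{2^n}$ gives $\overline{Q}^q = 1$. Second, because $\overline{M_0^j} = 1$, the image of $QM_0^j$ is $\overline{Q}$, so the hypothesis $(QM_0^j)^{qr+1} = E_{2^n}$ yields $\overline{Q}^{qr+1} = 1$.

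The conclusion then follows from an elementary number-theoretic observation: $\gcd(q, qr+1) = 1$, since any common divisor divides $(qr+1) - r\cdot q = 1$. By B\'ezout's identity there exist integers $s, t$ with $sq + t(qr+1) = 1$, hence
\begin{align*}
  \overline{Q} = \overline{Q}^{sq + t(qr+1)} = \bigl(\overline{Q}^q\bigr)^s \bigl(\overline{Q}^{qr+1}\bigr)^t = 1.
\end{align*}
Thus $Q \in \Ref^{(n)}$, as required.

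There is no real obstacle here; the argument is essentially a one-line quotient-group computation once one recognizes that $\Ref^{(n)}$ being normal allows $M_0^j$ to be discarded modulo $\Ref^{(n)}$. The only point to be careful about is confirming that the two exponents $q$ and $qr+1$ are automatically coprime for every $r \in \Z$ (including $r = 0$, in which case the statement degenerates to $Q = E_{2^n}$), so that no additional hypothesis on $r$ is needed.
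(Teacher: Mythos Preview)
Your proof is correct and is essentially the same idea as the paper's, just phrased in quotient-group language: the paper works directly in $\Mon^{(n)}$, expanding $(QM_0^j)^q$ as a product of conjugates $Q^i M_0^j Q^{-i}$ to see it lies in $\Ref^{(n)}$, and then writes $Q=(QM_0^j)^{-qr}\cdot M_0^{-j}\in\Ref^{(n)}$ explicitly. Your passage to $\Mon^{(n)}/\Ref^{(n)}$ and appeal to $\gcd(q,qr+1)=1$ is the same argument one level more abstract.
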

\begin{proof}
  By $Q^{-(q-1)} =Q$, we have
  \begin{align*}
    (Q M_0^j)^{q}
    =Q M_0^j Q^{-1} \cdot Q^2 M_0^j Q^{-2} \cdot Q^3 M_0^j Q^{-3}
    \cdots Q^{q-1} M_0^j Q^{-(q-1)} \cdot M_0^j \in \Ref^{(n)}. 
  \end{align*}
  Therefore, we obtain $Q = (Q M_0^j)^{-qr} \cdot M_0^{-j} \in \Ref^{(n)}$. 
\end{proof}

\subsection{A lemma on $\Mon^{(1)} (\al,\be ,\ga_1)$}
In this subsection, we set $c=c_1$ and $\ga=\ga_1$. 
For our study, we need to examine the intersection $\Ref^{(1)}(\al,\be,\ga) \cap \langle M_1 \rangle$
when at least one of $\ga$ and $\be \al^{-1}$ is $-1$.
Although this intersection was studied in \cite{Kato}, all the cases were not considered.
To take all the cases into consideration, we improve the results in \cite[Section 6]{Kato}. 
\begin{Lem}\label{lem:str-Gauss}
  The intersection $\Ref^{(1)} \cap \langle M_1 \rangle$ is given as follows. 
  \begin{enumerate}[({I}-1)]
  \item\label{condition-I-1}
    If $\ga=\be \al^{-1}=-1$, then $\Ref^{(1)} \cap \langle M_1 \rangle =\{ E_2 \}$. 
  \item\label{condition-I-2}
    Assume $\ga=-1$ and $\be \al^{-1}\neq -1$.
    \begin{enumerate}[({I-\ref{condition-I-2}}-1)]
    \item\label{condition-I-2-1}
      If $\ga \al^{-1} \be^{-1}=-1$ and $\al$ is a primitive $q$th root of unity for an odd number $q$, then
      $\Ref^{(1)} \cap \langle M_1 \rangle =\langle M_1 \rangle$. 
    \item\label{condition-I-2-2}
      Otherwise, $\Ref^{(1)} \cap \langle M_1 \rangle =\{ E_2 \}$. 
    \end{enumerate}
  \item\label{condition-I-3}
    If $\ga \neq -1$ and $\be \al^{-1}=\ga \al^{-1} \be^{-1}=-1$,
    then $\Ref^{(1)} \cap \langle M_1 \rangle =\{ E_2 \}$.     
  \item\label{condition-I-4}
    Assume $\ga \neq -1$, $\be \al^{-1}=-1$ and $\ga \al^{-1} \be^{-1} \neq -1$.
    \begin{enumerate}[({I-\ref{condition-I-4}}-1)]
    \item\label{condition-I-4-1}
      If $\ga =\ga \al^{-1} \be^{-1}$ and it is a primitive 3rd root of unity, then 
      $\Ref^{(1)} \cap \langle M_1 \rangle =\langle M_1 \rangle$. 
    \item\label{condition-I-4-2}
      If both $\ga$ and $\ga \al^{-1} \be^{-1}$ are primitive 5th roots of unity, then 
      $\Ref^{(1)} \cap \langle M_1 \rangle =\langle M_1 \rangle$. 
    \item\label{condition-I-4-3}
      Otherwise, $\Ref^{(1)} \cap \langle M_1 \rangle =\{ E_2 \}$. 
    \end{enumerate}
  \end{enumerate}
\end{Lem}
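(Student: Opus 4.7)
The plan is to establish each of the four main cases (I-\ref{condition-I-1})--(I-\ref{condition-I-4}) by splitting into two complementary tasks: for sub-cases where $\Ref^{(1)}\cap\langle M_1\rangle=\langle M_1\rangle$, I would prove $M_1\in\Ref^{(1)}$ via Lemma \ref{lem:str-order}; for sub-cases where the intersection is trivial, I would use a determinant obstruction combined with the Schwarz classification. Throughout, let $q=\mathrm{ord}(\ga)$ so that $M_1=G(\ga)$ has order $q$, $\det M_1=\ga^{-1}$, and $\det M_0=\delta_0^{(1)}(\al,\be,\ga)=\ga\al^{-1}\be^{-1}$. The finite irreducibility hypothesis forces $(\al,\be,\ga)$ to lie on Schwarz's list, leaving only finitely many arithmetic possibilities to inspect in each sub-case.

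For the three containment sub-cases (I-\ref{condition-I-2-1}), (I-\ref{condition-I-4-1}), (I-\ref{condition-I-4-2}), I would exhibit explicit integers $j$ and $r$ such that $(M_1 M_0^j)^{qr+1}=E_2$, and then invoke Lemma \ref{lem:str-order} with $Q=M_1$. The product $M_1 M_0^j$ is a $2\times 2$ matrix whose order is governed by its trace and determinant; the trace is computed directly from the formulas for $M_0$ and $M_1$ in Fact \ref{fact:rep-mat}, and under the parameter hypotheses of each sub-case it takes a value of the form $2\cos(\pi/m)$ with $m\in\{3,5\}$ (corresponding to the tetrahedral or icosahedral Schwarz types). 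Once the order of $M_1 M_0^j$ is seen to have the required form $qr+1$ for a suitable small $r$, Lemma \ref{lem:str-order} yields $M_1\in\Ref^{(1)}$, and hence $\langle M_1\rangle\subset\Ref^{(1)}$.

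For the trivial-intersection sub-cases, the key observation is that $\Ref^{(1)}$ is generated by conjugates of $M_0$, all having determinant $\delta_0^{(1)}$, so $\det(\Ref^{(1)})\subset\langle\delta_0^{(1)}\rangle$. Hence if $M_1^k\in\Ref^{(1)}$, then $\ga^{-k}\in\langle\ga\al^{-1}\be^{-1}\rangle$. Under the case hypotheses (for instance, in (I-\ref{condition-I-1}) one has $\delta_0^{(1)}=\al^{-2}$; in (I-\ref{condition-I-3}) one has $\delta_0^{(1)}=-1$; in (I-\ref{condition-I-2-2}) and (I-\ref{condition-I-4-3}) the excluded Schwarz data are precisely those where the containment would force a non-trivial relation) this congruence excludes all non-trivial $k\in\{1,\dots,q-1\}$. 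A residual finite check against Schwarz's list eliminates any leftover candidates; in particular one uses the fact that $M_1^k\in\Ref^{(1)}$ would have to be realized as a product of an explicit number of reflections, which is incompatible with the eigenvalue $\ga^{-k}$ in the excluded sub-cases.

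The hard part will be the exceptional sub-cases (I-\ref{condition-I-4-1}) and (I-\ref{condition-I-4-2}): one must pinpoint, via the explicit arithmetic of 3rd and 5th roots of unity, the precise exponents $j$ and $r$ for which $(M_1 M_0^j)^{qr+1}=E_2$ holds, and verify that no other Schwarz triple under the hypotheses of (I-\ref{condition-I-4}) produces such a relation. This is exactly the delicate bookkeeping missing from \cite[Section 6]{Kato}, and it is where the ``Otherwise'' clauses (I-\ref{condition-I-2-2}) and (I-\ref{condition-I-4-3}) acquire their content.
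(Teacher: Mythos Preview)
Your plan for the containment sub-cases via Lemma~\ref{lem:str-order} is correct and matches the paper: it finds $(M_1 M_0)^q=E_2$ for (I-\ref{condition-I-2-1}) (with $q=\mathrm{ord}(\al)$ an arbitrary odd number, not only $m\in\{3,5\}$), $(M_1 M_0)^4=E_2$ for (I-\ref{condition-I-4-1}), and $(M_1 M_0^3)^6=E_2$ for (I-\ref{condition-I-4-2}). Likewise, the determinant obstruction $\det(\Ref^{(1)})\subset\langle\delta_0^{(1)}\rangle$ is the right opening move for the trivial-intersection sub-cases, and for (I-\ref{condition-I-4}) it genuinely reduces the problem to a finite list, since $\ga\neq -1$ and $\delta_0^{(1)}\neq -1$ force the Schwarz triple out of the dihedral family.

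The gap is your claim that ``a residual finite check against Schwarz's list'' finishes the remaining trivial-intersection cases. In (I-\ref{condition-I-1}), (I-\ref{condition-I-3}), and the sub-case of (I-\ref{condition-I-2}) with $\delta_0^{(1)}=-1$ and $\al$ of even order, two of the Schwarz exponents are $\equiv 1/2$, so you are in the \emph{dihedral} family, which is infinite: there is no finite list to check. The determinant obstruction still leaves a nontrivial candidate (e.g.\ $M_1$ itself in (I-\ref{condition-I-1}) whenever $4\mid\mathrm{ord}(\al)$, and $M_1^{r}$ in (I-\ref{condition-I-3}) when $\ga$ has order $2r$), and ruling it out requires a uniform structural argument rather than a case check. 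The paper supplies exactly this: for (I-\ref{condition-I-2}) it uses that $\Ref^{(1)}=\langle M_0,\, M_1 M_0 M_1\rangle$ with both generators of order~$2$, enumerates the possible word-forms for $M_1$, and shows that $M_1\in\Ref^{(1)}$ forces $(M_1 M_0)^{2r+1}=E_2$ and hence odd order; for (I-\ref{condition-I-3}) it computes $(M_1^{j}M_0)^2=\ga^{-j}E_2$, obtains the explicit description $\Ref^{(1)}=\{\pm(\ga M_1^{2})^{j},\ \pm(\ga M_1^{2})^{j}M_0 : 0\le j<r\}$, and then excludes $M_1^{r}$ by comparing matrix entries. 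Your sketch needs these uniform arguments in place of the appeal to a finite Schwarz check.
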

\begin{proof}
  First, we assume $\ga =-1$.
  The claim (I-\ref{condition-I-1}) follows from \cite[Lemma 6.1]{Kato}.
  
  Second, we assume $\ga =-1$ and $\be \al^{-1}\neq -1$.
  By \cite[Lemma 6.2]{Kato}, if $\ga \al^{-1} \be^{-1} \neq -1$, then we have
  $\Ref^{(1)} \cap \langle M_1 \rangle =\{ E_2 \}$.
  We consider the case when $\ga \al^{-1} \be^{-1} = -1$ and assume that $\al$ is
  a primitive $q$th root of unity.
  In this case, we have $\be=\al^{-1}$ and $M_0^2 =M_1^2 =E_2$.
  The characteristic polynomial of
  \begin{align*}
    M_1 M_0 =
    \begin{pmatrix}
      \al +\al^{-1}-1 & -1 \\ -\al -\al^{-1}+2 & 1
    \end{pmatrix}
  \end{align*}
  is $\phi (x)=x^2 -(\al +\al^{-1})x+1$, and its roots are $\al$ and $\al^{-1}$.
  Because of $\phi (x) | (x^q-1)$, we have $(M_1 M_0)^q =E_2$.
  It is sufficient to show that $M_1 \in \Ref^{(1)}$
  if and only if $q$ is an odd integer.
  If $q=2r+1$ ($r\in \Z_{>0}$), then
  $(M_1 M_0)^{2r+1} =E_2$ and Lemma \ref{lem:str-order} imply $M_1 \in \Ref^{(1)}$. 
  Conversely, we assume $M_1 \in \Ref^{(1)}$. 
  Since $\Ref^{(1)} =\langle M_0 , M_1 M_0 M_1\rangle$ and each generator is
  of order 2, $M_1$ is expressed as one of the following:
  \begin{align*}
    &(M_0 \cdot M_1 M_0 M_1)^r = M_0 (M_1 M_0)^{2r-1} M_1,\quad
      (M_0 \cdot M_1 M_0 M_1)^r M_0 = M_0 (M_1 M_0)^{2r},\\ 
    &(M_1 M_0 M_1 \cdot M_0)^r =(M_1 M_0)^{2r},\quad
      (M_1 M_0 M_1 \cdot M_0)^r M_1 M_0 M_1  =(M_1 M_0)^{2r+1} M_1 ,
  \end{align*}
  where $r$ is some integer.
  Since $\det M_1=-1$, we have $M_1 =M_0 (M_1 M_0)^{2r}$ or
  $M_1 =(M_1 M_0)^{2r+1} M_1$.
  In either case, we obtain $(M_1 M_0)^{2r+1}=E_2$,
  and hence, $\phi (x)$ divides $x^{2r+1}-1$.
  Since $\al$ is a root of $\phi (x)$, we have $\al^{2r+1}=1$. 
  Thus, $q$ is an odd number, and the claim (I-\ref{condition-I-2}) is proved. 

  Third, we assume $\ga \neq -1$ and $\be \al^{-1}=\ga \al^{-1} \be^{-1}=-1$.
  Since $\det M_0=-1$ and $\det M_1 =\ga^{-1}$,
  the possibilities of 
  non-trivial intersection $\Ref^{(1)} \cap \langle M_1 \rangle$ are 
  only those cases in which $\ga$ is a primitive $2r$th root of unity ($r\in \Z_{>1}$)
  and $\Ref^{(1)} \cap \langle M_1 \rangle =\langle M_1^r \rangle$.
  We prove that these cases do not occur. 
  We assume $\ga$ is a primitive $2r$th root of unity. 
  By the assumption $\be \al^{-1}=\ga \al^{-1} \be^{-1}=-1$, we have
  \begin{align*}
    M_0=
    \begin{pmatrix}
      1 & 0 \\ \ga-1 & -1
    \end{pmatrix},\qquad 
    M_1=
    \begin{pmatrix}
      1 & -\ga^{-1} \\ 0 & \ga^{-1}
    \end{pmatrix}.
  \end{align*}
  By straightforward calculation,
  we can show that the characteristic polynomial of $M_1^j M_0$ is $x^2-\ga^{-j}$ and hence,
  we have $(M_1^j M_0)^2=\ga^{-j}E_2$. 
  We thus obtain 
  \begin{align*}
    M_1^j M_0 M_1^{-j} = M_1^j M_0 M_1^{2r-j}
    &= M_1^j \cdot M_1^j M_1^{2r-j} \cdot M_0 M_1^{2r-j} \cdot M_0 M_0 \\
    &= \ga^{-(2r-j)} M_1^{2j} M_0 = \ga^j M_1^{2j} M_0 .
  \end{align*}
  This expression and the identity $\ga^{r}=-1$ imply 
  $M_1^j M_0 M_1^{-j}=-M_1^{j-r} M_0 M_1^{-(j-r)}$.
  Especially for $j=r$, we have $M_1^r M_0 M_1^{-r}=-M_0$ and hence, we obtain 
  $-E_2 \in \Ref^{(1)}$. 
  Therefore, $\Ref^{(1)}$ is expressed as 
  \begin{align*}
    \Ref^{(1)}
    &=\langle M_0 , M_1 M_0 M_1^{-1}, M_1^2 M_0 M_1^{-2} ,\dots ,M_1^{2r-1} M_0 M_1^{-(2r-1)} \rangle \\
    &=\langle M_0 , \pm \ga M_1^2 M_0, \pm \ga^2 M_1^4 M_0 ,\dots  , \pm \ga^{r-1} M_1^{2(r-1)} M_0 ,-M_0\rangle \\
    &=\langle M_0 , -E_2 , \ga M_1^2 \rangle .
  \end{align*}
  Because of the identity 
  \begin{align*}
    (\ga M_1^2)^i M_0 (\ga M_1^2)^j
    &=\ga^{i+j} M_1^{2i}M_1^{2j} M_1^{-2j} M_0 M_1^{2j}
      =\ga^{i+j} M_1^{2i}M_1^{2j} \cdot \ga^{-2j} M_1^{-4j} M_0 \\
    &=\ga^{i-j} M_1^{2(i-j)} M_0
      =(\ga M_1^{2})^{i-j} M_0, 
  \end{align*}
  we obtain the expression 
  \begin{align*}
    \Ref^{(1)} =\{\pm (\ga M_1^2)^j \mid j=0,1,\dots ,r-1\}
    \cup \{\pm (\ga M_1^2)^j M_0 \mid j=0,1,\dots ,r-1\} .
  \end{align*}
  Comparing the entries of matrices, we can show 
  $M_1^r \not\in \Ref^{(1)}$.
  Thus, the claim (I-\ref{condition-I-3}) is proved. 
  
  Finally, we assume $\ga \neq -1$, $\be \al^{-1}= -1$ and $\ga \al^{-1} \be^{-1} \neq -1$.
  When the triple $(\la,\mu,\nu)=(1-c,c-a-b,b-a)=(1-c,c-a-b,1/2)$ is one in
  the Schwarz list (\cite[Table 1]{Bod}, \cite{Schwarz}), our claim
  is already proved in \cite{Kato}.
  However, $\Mon^{(1)}$ is also finite irreducible when
  a triple $(\la,\mu,\nu)$ is one in the Schwarz list after performing
  the following operations: 
  permutation of $\la,\mu,\nu$;
  sign change of each of $\la,\mu,\nu$; 
  addition of $(l_1,l_2,l_3)\in \Z^3$ with $l_1+l_2+l_3 \in 2 \Z$. 
  A triple such as this was not considered in \cite{Kato}.
  Here, we consider all the cases. 
  As was mentioned in the proof of \cite[Lemma 6.3]{Kato},
  if the denominators of $c$ and $c-a-b$ are different, then
  we have $\Ref^{(1)} \cap \langle M_1 \rangle =\{ E_2 \}$.
  The remaining cases are
  \begin{align*}
    (\ga ,\ga \al^{-1} \be^{-1})
    &= (\zeta_3 ,\zeta_3) ,\ (\zeta_3 ,\zeta_3^2),\ (\zeta_3^2 ,\zeta_3) ,\ (\zeta_3^2 ,\zeta_3^2) , \\
    & (\zeta_5 ,\zeta_5^2) ,\ (\zeta_5 ,\zeta_5^3),\ (\zeta_5^4 ,\zeta_5^2) ,\ (\zeta_5^4 ,\zeta_5^3),\ 
      (\zeta_5^2 ,\zeta_5) ,\ (\zeta_5^3 ,\zeta_5),\ (\zeta_5^2 ,\zeta_5^4) ,\ (\zeta_5^3 ,\zeta_5^4) .
  \end{align*}
  In each case, the matrices $M_0$ and $M_1$ are defined over the field
  $\Q(\zeta_3)$ or $\Q(\zeta_5)$.
  The relations between $M_0$ and $M_1$ are preserved under the actions in 
  the Galois group ${\rm Gal}(\Q(\zeta_3)/\Q)\simeq \Z/ 2\Z$ or
  ${\rm Gal}(\Q(\zeta_5)/\Q)\simeq \Z/ 4\Z$. 
  Therefore, it suffices to consider the four cases:
  \begin{align*}
    (\ga ,\ga \al^{-1} \be^{-1})
    = (\zeta_3 ,\zeta_3) ,\ (\zeta_3^2 ,\zeta_3) ,\ (\zeta_5^4 ,\zeta_5^2) ,\ (\zeta_5^4 ,\zeta_5^3).
  \end{align*}
  Two cases $(\ga ,\ga \al^{-1} \be^{-1})=(\zeta_3^2 ,\zeta_3),\ (\zeta_5^4 ,\zeta_5^2)$
  are discussed in the proof of \cite[Lemma 6.3]{Kato}, and
  our claim is true.
  We discuss the other two cases.
  If we assume $(\ga ,\ga \al^{-1} \be^{-1})=(\zeta_3 ,\zeta_3)$
  (resp. $(\zeta_5^4 ,\zeta_5^3)$), then
  we have $(M_1 M_0)^4=E_2$ 
  (resp. $(M_1 M_0^3)^6=E_2$) and hence,
  $M_1 \in \Ref^{(1)}$ by Lemma \ref{lem:str-order}. 
  Therefore, the proof of the claim (I-\ref{condition-I-4}) is completed. 
\end{proof}

\subsection{A lemma on $\Mon^{(2)} (\al,\be ,(\ga_1 ,\ga_2 ))$}
We consider the intersection $\Ref^{(2)} \cap \langle M_1 , M_2 \rangle$. 
If two of $\ga_1$, $\ga_2$, $\be \al^{-1}$ are $-1$,
then $\Ref^{(2)}$ is decomposed into $\Ref^{(1)}\times \Ref^{(1)}$
by Lemmas \ref{lem:red-1} and \ref{lem:red-2}.
These cases were already discussed in \cite[Section 7]{Kato}.
In this subsection, we assume that $\de_0^{(2)}=-1$, $\ga_1 \neq -1$ and 
either $\ga_2$ or $\be \al^{-1}$ is $-1$. 
Certain facts are verified by computer. 
\begin{Lem}\label{lem:str-F4}
  We assume $\de_0^{(2)}=-1$ (i.e., $\ga_1 \ga_2 \al^{-1} \be^{-1}=1$) and $\ga_1 \neq -1$. 
  The intersection $\Ref^{(2)} \cap \langle M_1 , M_2 \rangle$ is given as follows. 
  \begin{enumerate}[({II}-1)]
  \item\label{condition-II-1}
    If $\ga_2=-1$ and $\be \al^{-1} \neq -1$, then 
    $\Ref^{(2)} \cap \langle M_1 , M_2 \rangle =\{ E_4\}$. 
  \item\label{condition-II-2}
    Assume $\be \al^{-1} = -1$ and $\ga_2 \neq -1$.
    Let $\ga_k$ be a primitive $q_k$th root of unity ($q_k \in \{ 3,4,5\}$).
    \begin{enumerate}[({II-\ref{condition-II-2}}-1)]
    \item\label{condition-II-2-1}
      If $q_1 \neq q_2$, then
      $\Ref^{(2)} \cap \langle M_1 , M_2 \rangle =\{ E_4\}$. 
    \item\label{condition-II-2-2}
      If $q_1=q_2=3$ and $\ga_2=\ga_1$, then 
      $\Ref^{(2)} \cap \langle M_1 , M_2 \rangle=\{ E_4 \}$. 
    \item\label{condition-II-2-3}
      If $q_1=q_2=3$ and $\ga_2=\ga_1^2$, then 
      $\Ref^{(2)} \cap \langle M_1 , M_2 \rangle=\langle M_1 M_2 \rangle \simeq \Z/3\Z$. 
    \item\label{condition-II-2-4}
      If $q_1=q_2=5$, then 
      $\Ref^{(2)} \cap \langle M_1 , M_2 \rangle=\langle M_1 M_2^j \rangle \simeq \Z/5\Z$,
      where $j$ is an integer such that $\ga_1 \ga_2^{j}=1$. 
    \end{enumerate}
  \end{enumerate}
\end{Lem}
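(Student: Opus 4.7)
The plan is to split the proof into cases (II-1) and (II-2), and in each to combine a determinant obstruction with the tensor-product form of $M_1, M_2$ and an application of Lemma \ref{lem:str-order}. The common starting point is that $\det M_k = \gamma_k^{-2}$ for $k = 1, 2$ (since $\det G(\gamma_k) = \gamma_k^{-1}$ and the other tensor factor is $E_2$), while $\det M_0 = \delta_0^{(2)} = -1$. Hence every element of $\Ref^{(2)}$ has determinant $\pm 1$, so a necessary condition for $M_1^i M_2^j \in \Ref^{(2)}$ is $\gamma_1^{2i} \gamma_2^{2j} \in \{\pm 1\}$; this drastically restricts the candidate exponents from the outset.

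\textbf{For case (II-1)} with $\gamma_2 = -1$, $\gamma_1 \neq -1$, $\beta\alpha^{-1} \neq -1$, the identity $\delta_0^{(2)} = -1$ forces $\gamma_1 = -\alpha\beta$. The determinant test reduces the candidates to a small finite set, and for each such $(i, j)$ I would write out $M_1^i M_2^j$ explicitly (it is $2 \times 2$-block diagonal because of the tensor form) and compare its entries with those of an arbitrary element of $\Ref^{(2)}$, built out of conjugates $g M_0 g^{-1}$ computed from the displayed matrix form of $M_0$. The irreducibility assumption (\ref{irred-2}) together with $\gamma_1 \neq \pm 1$ prevents any matching for nonzero exponents, giving the trivial intersection.

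\textbf{For case (II-2)} with $\beta\alpha^{-1} = -1$ and $\gamma_1, \gamma_2 \neq -1$, neither Lemma \ref{lem:red-1} nor Lemma \ref{lem:red-2} applies, so we work directly with $\Mon^{(2)}$. In the containment subcases, set $Q := M_1 M_2$ in (II-2-\ref{condition-II-2-3}) with $\gamma_2 = \gamma_1^2$, and $Q := M_1 M_2^j$ in (II-2-\ref{condition-II-2-4}) with $\gamma_1 \gamma_2^j = 1$; in both, $\det Q = 1$ and direct computation gives $Q^q = E_4$ with $q = 3$ or $5$ respectively. The plan is then to find an integer $r$ (and an exponent $j'$) so that $(Q M_0^{j'})^{qr+1} = E_4$, whereupon Lemma \ref{lem:str-order} yields $Q \in \Ref^{(2)}$; combined with the determinant obstruction and $Q^q = E_4$, this pins down the intersection as $\langle Q \rangle \simeq \Z/q\Z$. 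For the non-containment subcases (II-2-\ref{condition-II-2-1}) with $q_1 \neq q_2$ and (II-2-\ref{condition-II-2-2}) with $\gamma_2 = \gamma_1$, the constraint $\gamma_1^{2i} \gamma_2^{2j} \in \{\pm 1\}$ leaves only $(i, j) \equiv (0, 0)$ modulo the orders, and a direct entry comparison rules out the remaining possibilities.

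The main obstacle will be verifying the explicit matrix identities $(Q M_0^{j'})^{qr+1} = E_4$ in (II-2-\ref{condition-II-2-3}) and (II-2-\ref{condition-II-2-4}): these are $4 \times 4$ identities with entries in $\Q(\zeta_q)$ and depend on several multiplicative relations among the parameters, and as the paper remarks, some of them are verified by computer. The delicate conceptual step is choosing $Q$, $j'$, and $r$ so that Lemma \ref{lem:str-order} applies and so that the resulting element, together with its powers, exhausts $\Ref^{(2)} \cap \langle M_1, M_2 \rangle$.
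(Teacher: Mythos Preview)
Your overall strategy---determinant obstruction to cut down candidates, then Lemma~\ref{lem:str-order} to establish the containments in (II-2-\ref{condition-II-2-3}) and (II-2-\ref{condition-II-2-4})---is exactly the paper's approach. However, your determinant analysis for the \emph{non-containment} cases is too optimistic. In (II-2-\ref{condition-II-2-1}) with $q_2=4$, the condition $\gamma_1^{2i}\gamma_2^{2j}\in\{\pm1\}$ does not force $(i,j)\equiv(0,0)$: since $\gamma_2^{2}=\pm1$ already, the element $M_2^2$ survives as a candidate. Likewise in (II-2-\ref{condition-II-2-2}) with $\gamma_2=\gamma_1$ a primitive cube root, the constraint becomes $3\mid(i+j)$, leaving $M_1 M_2^2$ (and its inverse) as candidates. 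And in (II-1), the determinant alone does not settle the matter at all; the paper lists eighteen parameter pairs that need checking. So the step ``leaves only $(i,j)\equiv(0,0)$'' is simply false in these cases.

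For those surviving candidates, your proposed ``direct entry comparison with an arbitrary element of $\Ref^{(2)}$'' is not a workable plan: $\Ref^{(2)}$ is generated by all conjugates $gM_0g^{-1}$, and you have no structural description of a generic element against which to compare. The paper does not attempt this either; instead it computes $|\Mon^{(2)}|$, $|\Ref^{(2)}|$ and $|\langle M_1,M_2\rangle|$ in GAP for each of the finitely many parameter choices and checks that $|\Mon^{(2)}|/|\Ref^{(2)}|=|\langle M_1,M_2\rangle|$, which forces the intersection to be trivial. Conversely, the identities you flag as potentially computer-dependent---$(M_1M_2M_0)^4=E_4$, $(M_1^3M_2M_0)^6=E_4$, $(M_1M_2^3M_0)^6=E_4$---are in fact done by hand in the paper; the computer is used only for the cardinality tables in the trivial-intersection cases.
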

\begin{Rem}
  In the case (II-\ref{condition-II-2}),
  when $q_1=q_2$, we can show that $q_k \neq 4$.
  Indeed, if we assume $q_1=q_2=4$,
  the irreducibility condition (\ref{irred-2}) does not hold. 
\end{Rem}
\begin{proof}[Proof of Lemma \ref{lem:str-F4}]
  Since $\det M_0 =\de_0^{(2)}=-1$ and $\det (M_1^{j_1}M_2^{j_2})=\ga_1^{-2j_1}\ga_2^{-2j_2}$, 
  the intersection $\Ref^{(2)} \cap \langle M_1 , M_2 \rangle$ is trivial 
  except for the following possibilities:
  \begin{enumerate}[(i)]
  \item\label{proof-lem-F4-1}
    $\ga_2=-1$, $\be \al^{-1} \neq -1$; 
  \item\label{proof-lem-F4-2}
    $\ga_2 \neq -1$, $\be \al^{-1} = -1$, $q_1 \neq q_2 =4$ and
    $M_2^2 \in \Ref^{(2)} \cap \langle M_1 , M_2 \rangle$; 
  \item\label{proof-lem-F4-3}
    $\ga_2 \neq -1$, $\be \al^{-1} = -1$, $q_1 = q_2 =3$, $\ga_2=\ga_1$ and
    $M_1 M_2^2 \in \Ref^{(2)} \cap \langle M_1 , M_2 \rangle$; 
  \item\label{proof-lem-F4-4}
    $\ga_2 \neq -1$, $\be \al^{-1} = -1$, $q_1 = q_2 =3$, $\ga_2=\ga_1^2$ and
    $M_1 M_2 \in \Ref^{(2)} \cap \langle M_1 , M_2 \rangle$; 
  \item\label{proof-lem-F4-5}
    $\ga_2 \neq -1$, $\be \al^{-1} = -1$, $q_1 = q_2 =5$, $\ga_1 \ga_2^{j}=1$ and
    $M_1 M_2^{j} \in \Ref^{(2)} \cap \langle M_1 , M_2 \rangle$. 
  \end{enumerate}
  To prove the lemma, it suffices to show that
  (\ref{proof-lem-F4-1}), (\ref{proof-lem-F4-2}), (\ref{proof-lem-F4-3}) are false and 
  (\ref{proof-lem-F4-4}), (\ref{proof-lem-F4-5}) are true under our assumption. 
  We prove the fact that (\ref{proof-lem-F4-1}), (\ref{proof-lem-F4-2}), (\ref{proof-lem-F4-3})
  are false by using a computer\footnote{
    We use the system GAP (\texttt{https://www.gap-system.org}).
    The author does not have an elegant proof. 
    We use the functions
    \begin{itemize}
    \item \texttt{Group} and \texttt{NormalClosure} (\texttt{https://www.gap-system.org/Manuals/doc/ref/chap39.html})
      to define $\Mon^{(2)}$ and $\Ref^{(2)}$, respectively; 
    \item \texttt{Size} (\texttt{https://www.gap-system.org/Manuals/doc/ref/chap30.html})
      to compute the cardinalities of the groups $\Mon^{(2)}$ and $\Ref^{(2)}$. 
    \end{itemize}
  }.
  In the case (\ref{proof-lem-F4-1}), we have 18 possibilities of the pair $(\ga_1 ,\be \al^{-1})$
  up to the complex conjugate.
  Table \ref{tab:F4-check-1} lists the cardinalities of $\Mon^{(2)}$, $\Ref^{(2)}$ and
  the cyclic group $\langle M_1,M_2 \rangle$ for all the pairs.
  This implies $\Ref^{(2)} \cap \langle M_1 , M_2 \rangle =\{ E_4 \}$.
  Similarly, by computing the cardinalities (Table \ref{tab:F4-check-2}), we can verify that 
  (\ref{proof-lem-F4-2}) and (\ref{proof-lem-F4-3}) are also false. 
  \begin{table}
    \centering
    \begin{tabular}{|c|c|c|c|c|c|}
      \hline
      $\ga_1$ & $\be\al^{-1}$ & $|\Mon^{(2)}|$ & $|\Ref^{(2)}|$ & $|\Mon^{(2)}|/|\Ref^{(2)}|$ & $|\langle M_1,M_2 \rangle|$ \\
      \hline \hline
      $\zeta_3$ & $\zeta_3^j$ & 1152 & 192 & 6 & 6 \\ \hline 
      $\zeta_3$ & $\zeta_4^{2j-1}$ & 6912 & 1152 & 6 & 6 \\ \hline 
      $\zeta_3$ & $\zeta_5^k$ & 86400 & 14400 & 6 & 6 \\ \hline 
      $\zeta_4$ & $\zeta_3^j$ & 9216 & 1152 & 8 & 8 \\ \hline 
      $\zeta_5^i$ & $\zeta_3^j$ & 144000 & 14400 & 10 & 10 \\ \hline 
      $\zeta_5$ & $\zeta_5^2$ & 144000 & 14400 & 10 & 10 \\ \hline 
      $\zeta_5$ & $\zeta_5^3$ & 144000 & 14400 & 10 & 10 \\ \hline 
      $\zeta_5^2$ & $\zeta_5$ & 144000 & 14400 & 10 & 10 \\ \hline 
      $\zeta_5^2$ & $\zeta_5^4$ & 144000 & 14400 & 10 & 10 \\ \hline 
    \end{tabular}  
    \caption{(\ref{proof-lem-F4-1}) $\ga_2=\de_0^{(2)}=-1$ and $\be \al^{-1}\neq -1$ \ ($i,j\in \{ 1,2\}$, $k\in \{ 1,2,3,4\}$)}
    \label{tab:F4-check-1}
  \end{table}
  \begin{table}
    \centering
    \begin{tabular}{|c|c|c|c|c|c|}
      \hline
      $\ga_1$ & $\ga_2$ & $|\Mon^{(2)}|$ & $|\Ref^{(2)}|$ & $|\Mon^{(2)}|/|\Ref^{(2)}|$ & $|\langle M_1,M_2 \rangle|$ \\
      \hline \hline
      $\zeta_3$ & $\zeta_4$ & 13824 & 1152 & 12 & 12 \\ \hline 
      $\zeta_3^2$ & $\zeta_4$ & 13824 & 1152 & 12 & 12 \\ \hline
      $\zeta_3$ & $\zeta_3$ & 1728 & 192 & 9 & 9 \\ \hline
    \end{tabular}  
    \caption{$\be \al^{-1}=\de_0^{(2)}=-1$ and (\ref{proof-lem-F4-2}) $\ga_2=\zeta_4$,
    (\ref{proof-lem-F4-3}) $\ga_1=\ga_2=\zeta_3$}  
    \label{tab:F4-check-2}
  \end{table}

  We consider the case (\ref{proof-lem-F4-4}).
  We may assume $\ga_1 =\zeta_3$, $\ga_2 =\zeta_3^2$ and $\be \al^{-1} = -1$. 
  By straightforward calculation, we have $(M_1 M_2 M_0)^4=E_4$.
  Thus, Lemma \ref{lem:str-order} yields $M_1 M_2 \in \Ref^{(2)}$. 

  Finally, we consider the case (\ref{proof-lem-F4-5}).
  By $\be \al^{-1} = -1$, $\ga_1 \al^{-1} \be^{-1}=\ga_2^{-1}$ and the Schwarz list,
  it is sufficient to discuss two cases $(\ga_1 ,\ga_2)=(\zeta_5,\zeta_5^2),(\zeta_5,\zeta_5^3)$,
  up to the actions in ${\rm Gal}(\Q(\zeta_5)/\Q)$.
  If we assume $(\ga_1 ,\ga_2)=(\zeta_5,\zeta_5^2)$, then we have
  $(M_1^3 M_2 M_0)^6=E_4$. 
  By Lemma \ref{lem:str-order}, we obtain $M_1^3 M_2 \in \Ref^{(2)}$ and hence, 
  $M_1 M_2^2 =(M_1^3 M_2)^2 \in \Ref^{(2)}$. 
  If we assume $(\ga_1 ,\ga_2)=(\zeta_5,\zeta_5^3)$, then we have
  $(M_1 M_2^3 M_0)^6=E_4$ which implies $M_1 M_2^3 \in \Ref^{(2)}$.
  Therefore, the proof is completed. 
\end{proof}

\subsection{Structure of $\Mon^{(n)}(\al,\be ,\ga)$}
Now, we provide the structure of $\Mon^{(n)}(\al,\be ,\ga)$.
Without loss of generality, 
we may assume that the condition (\ref{condition-B}-\ref{condition-B-a}), 
(\ref{condition-B}-\ref{condition-B-b}), (\ref{condition-B}-\ref{condition-B-c}) or
(\ref{condition-B}-\ref{condition-B-d}) holds.
Let $\ga_k$ be a primitive $q_k$th root of unity ($k\in \{1,2\}$, $q_k \in \{ 2,3,4,5,\dots\}$).
The structure of $\Mon^{(n)}(\al,\be ,\ga)$ is classified into the following four types: 
\begin{enumerate}[(Type 1)]
\item\label{type-1}
  $\Mon^{(n)}(\al ,\be ,\ga) = \Ref^{(n)} (\al ,\be ,\ga) \cdot \langle M_1 ,\dots ,M_n \rangle$ with
  \begin{align*}
    &\Ref^{(n)} \cap \langle M_1,\dots ,M_n \rangle =\{ E_{2^n} \}, \quad 
      \Ref^{(n)} (\al ,\be ,\ga) \simeq 
      \left(\Ref^{(1)}(\al,\be,\ga_1) \right)^{2^{n-1}},\\
    &\langle M_1 ,\dots ,M_n \rangle \simeq
      \Z/q_1 \Z \times (\Z/2\Z)^{n-1} ,\quad
      \Mon^{(1)}(\al,\be,\ga_1)/\Ref^{(1)}(\al,\be,\ga_1) \simeq \Z/q_1 \Z;
  \end{align*}
\item\label{type-2}
  $\Mon^{(n)}(\al ,\be ,\ga) = \Ref^{(n)} (\al ,\be ,\ga) \cdot \langle M_2 ,\dots ,M_n \rangle$ with
  \begin{align*}
    &\Ref^{(n)} \cap \langle M_2,\dots ,M_n \rangle =\{ E_{2^n} \}, \quad 
      \Ref^{(n)} (\al ,\be ,\ga) \simeq 
      \left(\Ref^{(1)}(\al,\be,\ga_1) \right)^{2^{n-1}},\\
    &\langle M_2 ,\dots ,M_n \rangle \simeq
      (\Z/2\Z)^{n-1} ,\quad
      \Mon^{(1)}(\al,\be,\ga_1) = \Ref^{(1)}(\al,\be,\ga_1);
  \end{align*}
\item\label{type-3}
  $\Mon^{(n)}(\al ,\be ,\ga) = \Ref^{(n)} (\al ,\be ,\ga) \cdot \langle M_1 ,\dots ,M_n \rangle$ with
  \begin{align*}
    &\Ref^{(n)} \cap \langle M_1,\dots ,M_n \rangle =\{ E_{2^n} \}, \quad 
      \Ref^{(n)}(\al,\be,\ga ) \simeq 
      \left(\Ref^{(2)}(\al,\be,(\ga_1,\ga_2)) \right)^{2^{n-2}}  ,\\
    &\langle M_1 ,\dots ,M_n \rangle \simeq
      \Z/q_1 \Z \times \Z/q_2 \Z \times (\Z/2\Z)^{n-2} ,\\
    &\Mon^{(2)}(\al,\be,(\ga_1,\ga_2))/\Ref^{(2)}(\al,\be,(\ga_1,\ga_2))
      \simeq \Z/q_1 \Z \times \Z/q_2 \Z;
  \end{align*}
\item\label{type-4} 
  $\Mon^{(n)}(\al ,\be ,\ga) = \Ref^{(n)} (\al ,\be ,\ga) \cdot \langle M_2 ,\dots ,M_n \rangle$ with
  \begin{align*}
    &\Ref^{(n)} \cap \langle M_2,\dots ,M_n \rangle =\{ E_{2^n} \}, \quad 
      \Ref^{(n)}(\al,\be,\ga ) \simeq 
      \left(\Ref^{(2)}(\al,\be,(\ga_1,\ga_2)) \right)^{2^{n-2}}  ,\\
    &\langle M_2 ,\dots ,M_n \rangle \simeq
      \Z/q_2 \Z \times (\Z/2\Z)^{n-2} ,\quad
    \Mon^{(2)}(\al,\be,(\ga_1,\ga_2))/\Ref^{(2)}(\al,\be,(\ga_1,\ga_2))
      \simeq \Z/q_2 \Z.
  \end{align*}
\end{enumerate}
Note that 
the structures of $\Ref^{(2)}(\al,\be,(\ga_1,\ga_2))$ in Type \ref{type-3} and Type \ref{type-4} 
were investigated in \cite{Kato-Sekiguchi}.

\begin{Th}\label{th:str}
  Let $n\geq 3$ and assume that $\Mon^{(n)}(\al,\be ,\ga)$ is finite irreducible. 
  We may also assume that $\ga_k$ is a primitive $q_k$th root of unity
  ($k\in \{1,2\}$, $q_k \in \{ 2,3,4,5,\dots\}$) and $\ga_3 =\cdots =\ga_n=-1$.  
  The structure of $\Mon^{(n)}(\al,\be ,\ga)$ is
  given as follows. 
  \begin{itemize}
  \item[(\ref{condition-B}-\ref{condition-B-a})]
    Assume $\ga_1=\ga_2=-1$ ($q_1=q_2=2$).
    \begin{enumerate}[({\ref{condition-B}-\ref{condition-B-a}}-1)]
    \item\label{list-str-Ba1}
      If $\be \al^{-1}\neq -1$, $\al \be=1$ and
      $\al$ is a primitive $q$th root of unity for an odd number $q$, then
      $\Mon^{(n)}(\al,\be ,\ga)$ is of Type 2 and
      $\Ref^{(n)} \cap \langle M_1 ,\dots ,M_n \rangle =\langle M_1 \cdots M_n \rangle$. 
    \item\label{list-str-Ba2}
      Otherwise, $\Mon^{(n)}(\al,\be ,\ga)$ is of Type 1. 
    \end{enumerate}
  \item[(\ref{condition-B}-\ref{condition-B-b})]
    Assume $\ga_2=\be\al^{-1}=-1$ and $\ga_1 \neq -1$. 
    \begin{enumerate}[({\ref{condition-B}-\ref{condition-B-b}}-1)]
    \item\label{list-str-Bb1}
      If $\ga_1 \al^{-1} \be^{-1} \neq -1$, $\ga_1 =\ga_1 \al^{-1} \be^{-1}$ and $q_1=3$, then
      $\Mon^{(n)}(\al,\be ,\ga)$ is of Type 2 and
      $\Ref^{(n)} \cap \langle M_1 ,\dots ,M_n \rangle =\langle M_1  \rangle$.       
    \item\label{list-str-Bb2}      
      If $\ga_1 \al^{-1} \be^{-1} \neq -1$ and both $\ga_1$ and $\ga_1 \al^{-1} \be^{-1}$ are
      primitive 5th roots of unity, then
      $\Mon^{(n)}(\al,\be ,\ga)$ is of Type 2 and
      $\Ref^{(n)} \cap \langle M_1 ,\dots ,M_n \rangle =\langle M_1  \rangle$.       
    \item\label{list-str-Bb3}      
      Otherwise, $\Mon^{(n)}(\al,\be ,\ga)$ is of Type 1. 
    \end{enumerate}
  \item[(\ref{condition-B}-\ref{condition-B-c})] 
    Assume $\ga_2=\de_0^{(n)}(\al,\be,\ga)=-1$ and $\ga_1 \neq -1$, $\be\al^{-1} \neq -1$.
    Then $\Mon^{(n)}(\al,\be ,\ga)$ is of Type 3. 
  \item[(\ref{condition-B}-\ref{condition-B-d})] 
    Assume $\be\al^{-1}=\de_0^{(n)}(\al,\be,\ga)=-1$ and $\ga_1 \neq -1$, $\ga_2 \neq -1$.
    \begin{enumerate}[({\ref{condition-B}-\ref{condition-B-d}}-1)]
    \item\label{list-str-Bd1}
      If $q_1 =q_2=3$ and $\ga_2 =\ga_1^2$, then
      $\Mon^{(n)}(\al,\be ,\ga)$ is of Type 4 and
      $\Ref^{(n)} \cap \langle M_1 ,\dots ,M_n \rangle =\langle M_1 M_2 \rangle$.       
    \item\label{list-str-Bd2}      
      If $q_1=q_2=5$, then
      $\Mon^{(n)}(\al,\be ,\ga)$ is of Type 4 and
      $\Ref^{(n)} \cap \langle M_1 ,\dots ,M_n \rangle =\langle M_1 M_2^j \rangle$,
      where $j$ is an integer such that $\ga_1 \ga_2^{j}=1$. 
    \item\label{list-str-Bd3}      
      Otherwise, $\Mon^{(n)}(\al,\be ,\ga)$ is of Type 3. 
    \end{enumerate}
  \end{itemize}
\end{Th}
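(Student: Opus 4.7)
The plan is to proceed by case analysis on which of (\ref{condition-B}-\ref{condition-B-a})--(\ref{condition-B}-\ref{condition-B-d}) holds, using as the main tool the direct-product decompositions of $\Ref^{(n)}$ established in the proof of Proposition \ref{prop:if-fin}. In each case, $\Mon^{(n)}(\al,\be,\ga)$ is generated by $\Ref^{(n)}$ together with $M_1,\dots,M_n$, and because $\ga_k=-1$ forces $M_k^2=E_{2^n}$ while all the $M_k$'s commute, the cyclic-group tensor structure of $\langle M_1,\dots,M_n\rangle$ is immediate. The real content is the intersection $\Ref^{(n)}\cap \langle M_1,\dots,M_n\rangle$, which is what determines the Type.

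First, I would treat (\ref{condition-B}-\ref{condition-B-a}) by iterating Lemma \ref{lem:red-1} to get $\Ref^{(n)}\simeq (\Ref^{(1)}(\al,\be,-1))^{2^{n-1}}$, then track the projection of each $M_k$ onto the factors: exactly one combination of $M_k$'s acts nontrivially (by $\pm E_2$) on a given $\Ref^{(1)}$-factor, reducing the question to $\Ref^{(1)}(\al,\be,-1)\cap \langle M_1\rangle$, to which Lemma \ref{lem:str-Gauss} case (I-\ref{condition-I-2}) applies. The subcase (\ref{list-str-Ba1}) corresponds precisely to (I-\ref{condition-I-2-1}) (where $\al\be=1$ and the order $q$ of $\al$ is odd), and the generator of the intersection lifts to $M_1\cdots M_n$; otherwise the intersection is trivial and we are in Type 1.

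Second, I would handle (\ref{condition-B}-\ref{condition-B-b}) analogously, using Lemma \ref{lem:red-2} iteratively to obtain $\Ref^{(n)}\simeq (\Ref^{(1)}(\al,\be,\ga_1))^{2^{n-1}}$ and reducing to Lemma \ref{lem:str-Gauss} cases (I-\ref{condition-I-3}) and (I-\ref{condition-I-4}); the subcases (\ref{list-str-Bb1})--(\ref{list-str-Bb3}) match the three branches of (I-\ref{condition-I-4}). Cases (\ref{condition-B}-\ref{condition-B-c}) and (\ref{condition-B}-\ref{condition-B-d}) are analogous but use $\Ref^{(n)}\simeq (\Ref^{(2)}(\al,\be,(\ga_1,\ga_2)))^{2^{n-2}}$ (from Proposition \ref{prop:if-fin}) together with Lemma \ref{lem:str-F4}: branch (II-\ref{condition-II-1}) delivers (\ref{condition-B}-\ref{condition-B-c}) of Type 3, while branches (II-\ref{condition-II-2-1})--(II-\ref{condition-II-2-4}) deliver the subcases (\ref{list-str-Bd1})--(\ref{list-str-Bd3}) of (\ref{condition-B}-\ref{condition-B-d}). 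The quotient $\Mon^{(1)}/\Ref^{(1)}\simeq \Z/q_1\Z$ (resp.\ the $\Mon^{(2)}/\Ref^{(2)}$ quotient) is read off from $\det M_1=\ga_1^{-1}$ (resp.\ $\det M_1=\ga_1^{-1}$, $\det M_2=\ga_2^{-1}$) since $\det g=1$ for every reflection $g=hM_0h^{-1}$ in view of $\det M_0=\de_0^{(n)}=-1$ combined with the structure lemmas.

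The main obstacle, I expect, is the careful bookkeeping needed to lift a nontrivial element of $\Ref^{(1)}\cap\langle M_1\rangle$ or $\Ref^{(2)}\cap\langle M_1,M_2\rangle$ back through the iterated direct-product decomposition to pin down the explicit generator (e.g.\ $M_1\cdots M_n$ in (\ref{list-str-Ba1}), $M_1$ in (\ref{list-str-Bb1})--(\ref{list-str-Bb2}), or $M_1M_2^j$ in (\ref{list-str-Bd2})); one must verify that such a candidate lies simultaneously in every $\Ref^{(1)}$-factor (or every $\Ref^{(2)}$-factor) of the decomposition, which amounts to checking that its image under projection onto each factor coincides with the distinguished generator produced by Lemmas \ref{lem:str-Gauss}/\ref{lem:str-F4}, while all other cyclic words in the $M_k$ map to elements outside the respective intersection. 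Once this is established for one factor, the tensor-product form of the $M_k$'s used in the decomposition makes the same statement hold simultaneously on the remaining factors, thereby forcing the remaining words out of $\Ref^{(n)}$ and completing the identification of the Type.
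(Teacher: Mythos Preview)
Your plan is essentially the paper's own: reduce the question to the intersection $\Ref^{(n)}\cap\langle M_1,\dots,M_n\rangle$, use the iterated decompositions from Lemmas~\ref{lem:red-1}--\ref{lem:red-2} (as in Proposition~\ref{prop:if-fin}) to restrict to two- or four-dimensional blocks, and then invoke Lemmas~\ref{lem:str-Gauss} and~\ref{lem:str-F4}; the case-matching you describe is exactly how the paper proceeds.

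One point of mechanism deserves sharpening. You speak of ``tracking the projection of each $M_k$ onto the factors'' and of $M_k$ acting ``by $\pm E_2$'' on a given block. That is not quite what happens: in the decomposition $\C^{2^n}=W^{+}\oplus W^{-}$ of Lemma~\ref{lem:red-1} (resp.\ \ref{lem:red-2}), the matrices $M_{n-1},M_n$ (resp.\ $M_n$) do not act block-diagonally at all---they \emph{interchange} $W^{+}$ and $W^{-}$. Since every element of $\Ref^{(n)}$ preserves $W^{\pm}$, this immediately yields $M_1^{i_1}\cdots M_{n-2}^{i_{n-2}}M_{n-1}\notin\Ref^{(n)}$ (resp.\ $M_1^{i_1}\cdots M_{n-1}^{i_{n-1}}M_n\notin\Ref^{(n)}$), and re-indexing then rules out every word except $M_1\cdots M_n$ in case~(\ref{condition-B}-\ref{condition-B-a}), $M_1^{i_1}$ in case~(\ref{condition-B}-\ref{condition-B-b}), and the analogous survivors in (\ref{condition-B}-\ref{condition-B-c}),~(\ref{condition-B}-\ref{condition-B-d}). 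For the surviving candidate one then checks that its restriction to each two- (resp.\ four-) dimensional block is precisely $M_1^{(1)}$ (resp.\ $M_1^{(2)},M_2^{(2)}$), so the intersection question becomes the one treated in Lemma~\ref{lem:str-Gauss} (resp.\ \ref{lem:str-F4}). This interchange argument is the concrete replacement for your ``projection'' bookkeeping; with that adjustment your outline is the paper's proof.
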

\begin{proof}
  We use an approach similar to that of \cite[Theorems 7.1 and 7.2]{Kato} 
  to prove the theorem. 
  We only have to discuss the intersection
  $\Ref^{(n)} \cap \langle M_1 ,\dots ,M_n \rangle$.

  First, we consider the case (\ref{condition-B}-\ref{condition-B-a}).
  In the proof of Lemma \ref{lem:red-1}, 
  $W^{\pm}$ are invariant under $\Ref^{(n)}$, while $M_{n-1}$, $M_{n}$ interchange
  $W^{+}$ and $W^{-}$.
  Because $M_1,\dots ,M_{n-2}$ preserve $W^{+}$ and $W^{-}$, we have 
  \begin{align}
    \label{eq:M-prod-not-in-1}
    M_1^{i_1} \cdots M_{n-2}^{i_{n-2}} M_{n-1}\not\in \Ref^{(n)}, 
    \qquad (i_1,\dots ,i_{n-2})\in \Z^{n-2}.
  \end{align}
  By rearranging the indices $\{ 1,\dots ,n\}$, we obtain
  $M_1^{i_1}\cdots M_n^{i_n} \not\in \Ref^{(n)}$, except for
  $E_{2^n}$ and $M_1 \cdots M_n$. 
  Therefore, we obtain 
  $\Ref^{(n)} \cap \langle M_1 ,\dots ,M_n \rangle =\Ref^{(n)} \cap \langle M_1 \cdots M_n \rangle$.
  Repeated applications of the decomposition in the proof of Lemma \ref{lem:red-1} show that
  $\C^{2^n}$ is decomposed into a direct sum of two-dimensional subspaces.
  The restriction of the action of $M_1 \cdots M_n$ and $\Ref^{(n)}$ on
  each of these two-dimensional subspaces are $M_1^{(1)} \in \Mon^{(1)}(\al,\be ,-1)$
  and $\Ref^{(1)}(\al,\be ,-1)$, respectively. 
  The intersection $\Ref^{(n)} \cap \langle M_1 ,\dots ,M_n \rangle$ coincides with
  $\langle M_1 \cdots M_n \rangle$ if and only if
  $\Ref^{(1)}(\al,\be ,-1) \cap \langle M_1^{(1)} \rangle =\langle M_1^{(1)} \rangle$. 
  Thus, our claim follows from 
  Lemma \ref{lem:str-Gauss} (I-\ref{condition-I-1}) and (I-\ref{condition-I-2}). 

  Next, we consider the case (\ref{condition-B}-\ref{condition-B-b}).
  In the proof of Lemma \ref{lem:red-2}, 
  $W^{\pm}$ are invariant under $\Ref^{(n)}$, whereas $M_{n}$ interchange
  $W^{+}$ and $W^{-}$.
  Because $M_1,\dots ,M_{n-1}$ preserve $W^{+}$ and $W^{-}$, we have 
  \begin{align}
    \label{eq:M-prod-not-in-2}
    M_1^{i_1} \cdots M_{n-1}^{i_{n-1}} M_{n}\not\in \Ref^{(n)},
    \qquad (i_1,\dots ,i_{n-1})\in \Z^{n-1}.
  \end{align}
  By rearranging the indices $\{ 2,\dots ,n\}$, we obtain
  $M_1^{i_1}\cdots M_n^{i_n} \not\in \Ref^{(n)}$, except for
  $M_1^{i_1}$. 
  Therefore, we obtain 
  $\Ref^{(n)} \cap \langle M_1 ,\dots ,M_n \rangle =\Ref^{(n)} \cap \langle M_1 \rangle$.
  Repeated applications of the decomposition in the proof of Lemma \ref{lem:red-2} show that
  $\C^{2^n}$ is decomposed into a direct sum of two-dimensional subspaces.
  The restriction of the action of $M_1$ and $\Ref^{(n)}$ on
  each of these two-dimensional subspaces are $M_1^{(1)} \in \Mon^{(1)}(\al,\be ,\ga_1)$
  and $\Ref^{(1)}(\al,\be ,\ga_1)$, respectively. 
  The intersection $\Ref^{(n)} \cap \langle M_1 ,\dots ,M_n \rangle$ coincides with
  $\langle M_1 \rangle$ if and only if
  $\Ref^{(1)}(\al,\be ,\ga_1) \cap \langle M_1^{(1)} \rangle =\langle M_1^{(1)} \rangle$. 
  Thus, our claim follows from 
  Lemma \ref{lem:str-Gauss} (I-\ref{condition-I-3}) and (I-\ref{condition-I-4}). 

  Finally, we consider the cases 
  (\ref{condition-B}-\ref{condition-B-c}) and (\ref{condition-B}-\ref{condition-B-d}).
  \begin{itemize}
  \item[(\ref{condition-B}-\ref{condition-B-c})]
    Similarly to the case (\ref{condition-B}-\ref{condition-B-a}), we can show 
    $\Ref^{(n)} \cap \langle M_1 ,\dots ,M_n \rangle
    =\Ref^{(n)} \cap \langle M_1 , M_2 \cdots M_n \rangle$. 
    Note that in this case, 
    we can rearrange the indices $\{ 2,\dots ,n\}$ in (\ref{eq:M-prod-not-in-1}). 
    We also have the decomposition of $\C^{2^n}$ into a direct sum of four-dimensional subspaces, 
    and the restriction of the action of $M_1$, $M_2 \cdots M_n$ and $\Ref^{(n)}$ on 
    each of these four-dimensional subspaces are $M_1^{(2)},M_2^{(2)} \in \Mon^{(2)}(\al,\be ,(\ga_1,\ga_2))$
    and $\Ref^{(2)}(\al,\be ,(\ga_1,\ga_2))$, respectively. 
  \item[(\ref{condition-B}-\ref{condition-B-d})] 
    Similarly to the case (\ref{condition-B}-\ref{condition-B-b}), we can show 
    $\Ref^{(n)} \cap \langle M_1 ,\dots ,M_n \rangle
    =\Ref^{(n)} \cap \langle M_1 , M_2 \rangle$. 
    Note that in this case, 
    we can rearrange the indices $\{ 3,\dots ,n\}$ in (\ref{eq:M-prod-not-in-2}). 
    We also have the decomposition of $\C^{2^n}$ into a direct sum of four-dimensional subspaces, 
    and the restriction of the action of $M_1$, $M_2$ and $\Ref^{(n)}$ on 
    each of these four-dimensional subspaces are $M_1^{(2)},M_2^{(2)} \in \Mon^{(2)}(\al,\be ,(\ga_1,\ga_2))$
    and $\Ref^{(2)}(\al,\be ,(\ga_1,\ga_2))$, respectively. 
  \end{itemize}
  Therefore, the structure of the intersection $\Ref^{(n)} \cap \langle M_1 ,\dots ,M_n \rangle$
  is determined according to that of $\Ref^{(2)} \cap \langle M_1^{(2)} , M_2^{(2)} \rangle$.
  By Lemma \ref{lem:str-F4}, our claims are proved. 
\end{proof}


\begin{thebibliography}{99}
%
%
\bibitem{BH}
  F. Beukers and G. Heckman,
  \emph{Monodromy for the hypergeometric function ${}_nF_{n-1}$}, 
  {Invent. math.} \textbf{95} (1989), Issue 2, 325--354.
%
\bibitem{Beukers} 
  F. Beukers,
  Algebraic $A$-hypergeometric functions, 
  {\it Invent. math.} {\bf 180} (2010), 589--610. 
%
\bibitem{Bod} 
  E. Bod,
  Algebraicity of the Appell--Lauricella and Horn hypergeometric functions,
  {\it J. Differ. Equations} {\bf 252} (2012), 541--566. 
%
\bibitem{G-FC-monodromy}
  Y. Goto,
  \emph{The monodromy representation of Lauricella's hypergeometric function $F_C$}, 
  {Ann. Sc. Norm. Super. Pisa Cl. Sci. (5)}, \textbf{XVI} (2016), 1409--1445. 
  %
\bibitem{GK-FC-pi1}
  Y. Goto and J. Kaneko, 
  \emph{The fundamental group of the complement of 
    the singular locus of Lauricella's $F_C$}, 
  {J. Singul.}, \textbf{17} (2018), 295--329.
  %
\bibitem{GK-FC-Zariski}
  Y. Goto and K. Koike, 
  \emph{Picard--Vessiot groups of Lauricella's hypergeometric systems $E_C$ and 
    Calabi-Yau varieties arising integral representations},
  to appear in \emph{ J. Lond. Math. Soc}.
  %
\bibitem{GM-FC} Y. Goto and K. Matsumoto, 
  Irreducibility of the monodromy representation of Lauricella's $F_C$,
  \emph{Hokkaido Math. J.} \textbf{48} (2019), 489--512. 
  %
\bibitem{HT}
  R. Hattori and N. Takayama, 
  \emph{The singular locus of Lauricella's $F_C$}, 
  {J. Math. Soc. Japan}, \textbf{66} (2014), 981--995.
%
%
%
\bibitem{Kato} M. Kato, 
  Appell's $F_4$ with finite irreducible monodromy group, 
  {\it Kyushu J. Math.} {\bf 51} (1997), no. 1, 125--147. 
  %
\bibitem{Kato-Sekiguchi} M. Kato and J. Sekiguchi, 
  Reflection subgroups of the monodromy groups of Appell's $F_4$, 
  {\it Kyushu J. Math.} {\bf 64} (2010), no. 2, 281--296. 
  %
\bibitem{L}
  G. Lauricella, 
  {\it Sulle funzioni ipergeometriche a pi\`u variabili}, 
  {Rend. Circ. Mat. Palermo}, 
  \textbf{7} (1893), 111--158.
%
\bibitem{Sasaki} T. Sasaki,
  Picard--Vessiot group of Appell's system of hypergeometric differential equations 
  and infiniteness of monodromy group, 
  {\it Kumamoto J. Sci.(Math.)} {\bf 14} (1980/81), no. 1, 85--100. 
  %
\bibitem{Schwarz} H. A. Schwarz, 
  Ueber diejenigen F\"alle, 
  in welchen die Gaussische hypergeometrische Reihe eine algebraische Function 
  ihres vierten Elementes darstellt. 
  {\it J. Reine Angew. Math.} {\bf 75} (1873) 292--335. 
%
\bibitem{Terasoma}
  T. Terasoma, 
  {\it Fundamental group of non-singular locus of Lauricella's $F_C$}, 
  preprint, arXiv:1803.06609. 
\end{thebibliography}
\end{document}